\DeclareMathOperator{\curl}{curl}
\DeclareMathOperator{\divg}{div}
\theoremstyle{plain}
\newtheorem{Thm}{Theorem}
\newtheorem{Prop}[Thm]{Proposition}
\newtheorem{Rem}[Thm]{Remark}
\newtheorem{Lem}[Thm]{Lemma}
\newtheorem{Def}[Thm]{Definition}
\newcommand {\p}{\partial}
\newcommand{\q}{\quad}
\newcommand{\qq}{\qquad}
\newcommand{\eq}{\begin{equation}}
\newcommand{\eeq}{\end{equation}}
\def\a{\alpha}
\def\curl{\text{\rm curl\,}}
\def\div{\text{\rm div\,}}
\def\lam{\lambda}
\def\O{\Omega}
\def\p{\partial}
\def\q{\quad}
\def\qq{\qquad}
\def\s{\sigma}
\def\v{\vskip}
\def\loc{\text{\rm loc}}
\def\B{\bold B}
\def\E{\bold E}
\def\e{\bold e}
\def\F{\bold F}
\def\mH{\mathcal H}
\def\H{\bold H}
\def\h{\bold h}
\def\u{\bold u}
\def\w{\bold w}
\def\z{\bold z}
\def\0{\bold 0}
\numberwithin{equation}{section}
\numberwithin{Thm}{section}
\begin{document}

\large

\title[Thermoelectric Model]
{Existence and Regularity of Weak Solutions for a Thermoelectric Model}

\author{Xing-Bin Pan and Zhibing Zhang}

\address{Xing-Bin Pan: School of Mathematical Sciences, East China Normal University, and NYU-ECNU Institute
of Mathematical Sciences at NYU Shanghai,  Shanghai 200062, People's Republic of China.}

\email{xbpan@math.ecnu.edu.cn}

\address{Zhibing Zhang: School of Mathematics and Physics, Anhui University of Technology, Ma'anshan 243032, People's Republic of China.}

\email{zhibingzhang29@126.com}

\keywords{thermoelectric model, Maxwell system, elliptic equation, existence, regularity, uniqueness, div-curl system, Campanato space}

\subjclass[2010]{35Q60, 35Q61, 35J57, 35J60}

\begin{abstract}
This paper concerns a time-independent thermoelectric model with two different boundary conditions. The model is a nonlinear coupled system of the Maxwell equations and an elliptic equation. By analyzing carefully the nonlinear structure of the equations, and with the help of the De Giorgi-Nash estimate for elliptic equations, we obtain existence of weak solutions on Lipschitz domains for general boundary data. Using Campanato's method, we establish regularity results of the weak solutions.
\end{abstract}

\maketitle


\section{Introduction}

\subsection{The system}\

This paper is devoted to study of existence, regularity and uniqueness of weak solutions of the following
\begin{equation}\label{PZ0}
\left\{\aligned
&\nabla\times[\sigma(u)^{-1}\nabla\times \H]=\0,\q \nabla\cdot \H=0\q & \text{in } \O,\\
-&\Delta u=\sigma(u)^{-1}|\nabla\times \H|^2\q &\text{in } \O,\\
&u=u^0,\q \nu\cdot\H=0,\q \nu\times[\sigma(u)^{-1}\nabla\times \H]=\nu\times\E^0\q &\text{on } \p\O,
\endaligned\right.
\end{equation}
and
\begin{equation}\label{Pan-eq}
\left\{\aligned
&\nabla\times[\sigma(u)^{-1}\nabla\times \H]=\0,\q \nabla\cdot \H=0\q &\text{in } \O,\\
-&\Delta u=\sigma(u)^{-1}|\nabla\times \H|^2\q &\text{in } \O,\\
&u=u^0,\q  \nu\times\H=\nu\times\H^0\q &\text{on } \p\O.
\endaligned\right.
\end{equation}
Here $\O$ is a bounded domain in $\Bbb R^3$ with a Lipschitz boundary $\p\O$,  $u$ is a scalar function and $\H$ is a vector field, $\nu$ is the unit outer normal vector on $\p\O$,
$\sigma$ is a continuous scalar function bounded both from above and away from zero,  $\E^0$ and $\H^0$ are given vector fields. In \eqref{PZ0} it is natural to  assume that $\nabla\times\E^0=\0$.  Let us emphasize that the boundary condition for $\H$ in \eqref{PZ0} is the actual electric boundary condition (namely the boundary condition of prescribing the tangential component of the electric field $\E$, and it follows from the Maxwell's equations and the Ohm's law  $\E=\s(u)^{-1}\nabla\times\H$), and the boundary condition for $\H$ in \eqref{Pan-eq} is prescribing the tangential boundary condition for the magnetic field.

Systems \eqref{PZ0} and \eqref{Pan-eq} are the time-independent version of the thermoelectric model derived in \cite{Yin1994}, which describes electromagnetism in a medium with the electrical conductivity $\sigma$ depending on the temperature $u$, i.e., $\sigma=\sigma(u)$. Assuming that the electric current $\bold J$ and the electric field $\E$ obey Ohm's law
$\mathbf{J}=\sigma(u)\mathbf{E}$, and taking the Joule heating
$$\mathbf{J}\cdot\mathbf{E}=\sigma(u)|\mathbf{E}|^2
$$
as heat source, Yin derived the equation for the temperature $u$ as follows:
$$
{\p u\over\p t}-\Delta u=\sigma(u)|\bold E|^2.
$$
Yin combined this equation with the Maxwell's equations
\begin{equation*}
\left\{\aligned
&\nabla\cdot\mathbf{B}=0,\\
&\nabla\times\mathbf{H}=\frac{4\pi\sigma(u)}{c}\mathbf{E},\\
&\nabla\times\mathbf{E}=-\frac{1}{c}\frac{\p\mathbf{B}}{\p t},
\endaligned\right.
\end{equation*}
where $\mathbf{B}$ represents the magnetic induction, $\mathbf{H}$ represents the magnetic field, and $c$ is the speed of light.
Assuming that the magnetic induction $\bold B$ equals $\mu \mathbf{H}$, where the magnetic permeability $\mu$ is constant, and normalizing the constants in the equations, Yin derived the following model:
\begin{equation}\label{Time}
\left\{\aligned
&{\p \H\over\p t}+\nabla\times[\sigma(u)^{-1}\nabla\times \H]=\0,\\
&\nabla\cdot \H=0,\\
&{\p u\over\p t}-\Delta u=\sigma(u)^{-1}|\nabla\times \H|^2.
\endaligned\right.
\end{equation}
For more details of the derivation and analysis results see interesting papers \cite{Yin1994, Yin1997}.

In this paper we consider the steady state of \eqref{Time}:
\begin{equation*}
\left\{\aligned
&\nabla\times[\sigma(u)^{-1}\nabla\times \H]=\0,\\
&\nabla\cdot \H=0,\\
-&\Delta u=\sigma(u)^{-1}|\nabla\times \H|^2,
\endaligned\right.
\end{equation*}
and we shall establish existence and regularity of the weak solutions under natural assumptions. We hope our mathematical results be helpful for the application of this model in physics and engineering and for computations.

If the domain is simply-connected, then there exists a potential function $\varphi$ such that
$$\sigma(u)^{-1}\nabla\times \H=\nabla\varphi,
$$
and the above system is reduced to
\begin{equation*}
\left\{\aligned
&\nabla\cdot[\sigma(u)\nabla\varphi]=0,\\
-&\Delta u=\sigma(u)|\nabla\varphi|^2.
\endaligned\right.
\end{equation*}
This simplified model was used to analyze the Joule heating of electrically conducting media,  see \cite{Young1986,Young1987,YAB1983} and the references therein. See also \cite{FFH1992,Cimatti1991} and the references therein for the use of this model in the thermistor problem with a current limiting device. However if the domain is multiply-connected, then such potential function does not exist, and such reduction is not possible.

Recently,  under the condition of small boundary data, Pan \cite{Pan2015} obtained existence of classical solutions of \eqref{Pan-eq}:
\begin{enumerate}
\item[(i)]If $\O$ is a simply connected domain, then \eqref{Pan-eq} has classical solutions if $u^0$ and $\nu\cdot\nabla\times \H^0_T$
are small (\cite[Theorem 4.8]{Pan2015}).
\item[(ii)]If $\O$ is a multiply connected domain, then \eqref{Pan-eq} has classical solutions if $u^0$ and $\H^0_T$ are small and $\nu\cdot\nabla\times \H^0_T=0$ (\cite[Theorem 4.9]{Pan2015}).
\end{enumerate}
The main purpose of this paper is to prove existence of weak solutions of \eqref{PZ0} and \eqref{Pan-eq} for general domains. We shall obtain existence results on Lipschitz domains and without the extra condition of small boundary data. We shall also study regularity of weak solutions of \eqref{PZ0} and \eqref{Pan-eq}. Since in the present case $u$ is the temperature, its boundedness is essential for the models to be physically meaningful. Fortunately, this is a simple corollary of regularity results.

Systems \eqref{PZ0} and \eqref{Pan-eq} are interesting to us also for their special type of nonlinear structure. Since the only difference between systems \eqref{PZ0} and \eqref{Pan-eq} is the boundary condition for $\H$, we illustrate this point on system \eqref{PZ0}. Due to the quadratic nonlinearity in $\nabla\times\H$ in the second equation of \eqref{PZ0}, the problem of regularity of the weak solutions is non-trivial. In fact, if $(u,\H)$ is a weak solution of \eqref{PZ0}, then $u$ can be viewed as a weak solution of the Laplace equation with the right-hand term $\sigma(u)^{-1}|\nabla\times\H|^2\in L^1(\O)$:
\begin{equation}\label{eq-u}
-\Delta u=\sigma(u)^{-1}|\nabla\times \H|^2\q \text{in } \O,\q
u=u^0\q \text{on } \p\O.
\end{equation}
It is well-known that regularity of the Laplace equation with an $L^1$ right-hand term is a complicated problem. In order to derive higher regularity of a weak solution $(u,\H)$ of \eqref{PZ0}, we shall first improve the regularity of $\H$. By the assumption $\nabla\times\E^0=\0$ and using the first equation of \eqref{PZ0}, we can write
\eq\label{decom-3.1}
\sigma(u)^{-1}\nabla\times\H-\E^0=\nabla\varphi+\h,
\eeq
for some $\varphi\in H_0^1(\O)$ and $\h\in \Bbb H_D(\O)$, where $\Bbb H_D(\O)$ is the space of harmonic Dirichlet fields (see section 2). Noting that $\nabla\cdot(\nabla\times\H)=0$, we derive that $\varphi$ is a weak solution of a linear problem with measurable coefficient $\sigma(u)$:
\begin{equation}\label{eqphi-1}
\left\{\aligned
&\nabla\cdot[\sigma(u)(\nabla\varphi+\h+\E^0)]=0\q &\text{in } \O,\\
&\varphi=0\q &\text{on } \p\O.
\endaligned\right.
\end{equation}
Since $\E^0\in L^q(\O,\Bbb R^3)$ for some $q>3$, we can use Campanato's method to get $\nabla\varphi\in L^{2,\mu}(\O)$ for some $\mu>1$, from which we derive $\nabla\times\H\in L^{2,\mu}(\O)$ and $\varphi\in C^{0,\alpha}(\overline{\O})$ for some $\alpha\in (0,1)$. Here $L^{2,\mu}$ denotes the Campanato space. Then we write the right-hand term in \eqref{eq-u} in the form (see \eqref{equ-6})
$$
\sigma(u)^{-1}|\nabla\times\H|^2=\nabla\cdot[(\varphi+\varphi^0)\nabla\times\H]+(\h+\h_1)\cdot\nabla\times\H,
$$
where $\varphi^0\in H^1(\O)$ with $\int_\O \varphi^0 dx=0$, $\h_1\in \Bbb H_N(\O)$ (the space of harmonic Neumann fields), and both $(\varphi+\varphi^0)\nabla\times\H$ and $(\h+\h_1)\cdot\nabla\times\H$ belong to $\in L^{2,\mu}(\O)$. So we can apply Lemma \ref{M-Campanato3} to improve the regularity of $u$.

 Let us mention that we will also re-write the equation for $u$ in various forms for other purposes, see for instance \eqref{equ-7} and \eqref{equ-5}.

Regarding existence results of \eqref{Pan-eq}, we mention that a corresponding problem with the boundary condition for $\H$ replaced by a full Dirichlet boundary condition $\H=\H^0$ on $\p\O$ has been studied by several authors.
Yin \cite{Yin1997} studied the steady states of \eqref{Time} under the full Dirichlet boundary condition for $\H$ but without the divergence-free condition on $\H$:
\begin{equation}\label{Yin-eq}
\left\{\aligned
&\nabla\times[\sigma(u)^{-1}\nabla\times \H]=\0\q & \text{in } \O,\\
-&\Delta u=\sigma(u)^{-1}|\nabla\times \H|^2\q &\text{in } \O,\\
&u=u^0,\q \H=\H^0 \q&\text{on } \p\O.
\endaligned\right.
\end{equation}
Among other results, Yin \cite[Theorem 5.4]{Yin1997} claimed existence of a weak solution $(u,\H)\in W^{1,q}(\O)\times H^1(\O,\Bbb R^3)$ to the system \eqref{Yin-eq} with $1<q<3/2$. See also \cite{Yin2004} for the study of a more general system.
Under both the full Dirichlet boundary condition and the divergence-free condition on $\H$, Kang and Kim \cite{KK2002,KK2011} proved that the weak solutions of \eqref{Yin-eq} are globally H\"{o}lder continuous.
 Hong, Tonegawa and Yassin \cite{HTY2008} studied this system in the setting of differential forms in higher dimensions and obtained partial regularity of the weak solutions.

For the magnetic field $\H$ in Maxwell equations, it is more natural to consider the type of prescribing the normal or tangential boundary condition, rather than prescribing the full value of $\H$ (see for instance \cite{DL1990, Ce, Pi}).
Moreover, due to the different boundary conditions on $\H$,  existence results for problem \eqref{Pan-eq} and problem \eqref{Yin-eq} are quite different,  see  \cite[Subsection IV.G]{Pan2015}.

\subsection{Main results}\

Assume the function $\sigma(s)$ satisfies the following condition:
\begin{equation}\label{cond-rho}
\sigma \;\;\text{is continuous on $\Bbb R$},\q \sigma_1\leq\sigma(s)\leq\sigma_2\q \text{for all $s\in\Bbb R$},
\end{equation}
where $\sigma_1\leq \sigma_2$ are two positive constants. {The notation of spaces used in the following theorems will be given in section 2.}

\begin{Thm}\label{main1}
Let $\O$ be a bounded Lipschitz domain in $\Bbb R^3$. Assume the function $\sigma$ satisfies  \eqref{cond-rho}, $u^0\in H^1(\O)$, and $\E^0\in L^q(\O, \Bbb R^3)$ for some $q>3$ with $\nabla\times\E^0=\0$ in $\O$.
Then \eqref{PZ0} has a weak solution $(u,\H)\in H^1(\O)\times [H_0(\divg0,\O)\cap H(\curl,\O)]$. Furthermore we have:
\begin{itemize}
\item[(i)] If $\O$ is of class $C^2$ and $u^0\in W^{1,q}(\O)$, then $(u,\H)\in W^{1,q}(\O)\times W^{1,q}(\O,\Bbb R^3)$;
\item[(ii)] If $\O$ is of class $C^{3,\a}$ and simply connected, $\sigma\in C^{1,\a}_{\loc}(\Bbb R)$ with $\a\in (0,1)$,  and if
\eq\label{u0E0alpha}
(u^0,\E^0)\in C^{2,\a}(\overline{\O})\times C^{1,\a}(\overline{\O},\Bbb R^3),
\eeq
then $(u, \H)\in C^{2,\a}(\overline{\O})\times C^{2,\a}(\overline{\O},\Bbb R^3).$
\item[(iii)] Assume that $\O$ is of class $C^2$, the function $\sigma$ is Lipschitz on $\Bbb R$, i.e., there exists a positive constant $L$ such that
\begin{equation}\label{Lip-L}
|\sigma(s)-\sigma(t)|\leq L|s-t|\text{ for any $s$, $t\in \Bbb R$, }
\end{equation}
and assume $u^0\in W^{1,q}(\O)$. Then there exists $\eta>0$ such that if
\eq\label{small-E0}
\|\E^0\|_{L^q(\O)}<\eta,
\eeq
then the weak solution of \eqref{PZ0} in the space $H^1(\O)\times [H_0(\divg0,\O)\cap H(\curl,\O)\cap\Bbb H_N(\O)^\perp]$ is unique.
\end{itemize}
\end{Thm}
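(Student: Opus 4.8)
\emph{Existence.} My plan is a fixed-point argument in which the temperature is the unknown and the magnetic field is slaved to it. For $w\in L^2(\O)$, condition \eqref{cond-rho} makes $\sigma(w)$ bounded and uniformly elliptic, so I would solve the linear div--curl problem in the first line of \eqref{PZ0} (using also $\nabla\times\E^0=\0$) through the decomposition \eqref{decom-3.1}--\eqref{eqphi-1}: Lax--Milgram produces $\varphi\in H^1_0(\O)$ solving \eqref{eqphi-1} with coefficient $\sigma(w)$, the harmonic-Dirichlet part $\h$ is fixed by the div--curl solvability conditions recalled in Section~2, and $\H\in H_0(\divg0,\O)\cap H(\curl,\O)$ is recovered from $\nabla\times\H=\sigma(w)(\nabla\varphi+\h+\E^0)$, with $\|\nabla\times\H\|_{L^2(\O)}$ bounded only by $\sigma_1,\sigma_2,\|\E^0\|_{L^2}$. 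The fact that the Joule term $\sigma(w)^{-1}|\nabla\times\H|^2$ lies only in $L^1(\O)$ I would handle by the rewriting \eqref{equ-6}, $\sigma(w)^{-1}|\nabla\times\H|^2=\nabla\cdot[(\varphi+\varphi^0)\nabla\times\H]+(\h+\h_1)\cdot\nabla\times\H$, whose two summands belong to $L^2(\O)$ (because $\varphi$ and $\varphi^0$ are bounded — De Giorgi--Nash/Stampacchia for $\varphi$, elliptic $W^{1,q}$-regularity for $\varphi^0$ — and the harmonic fields are smooth); hence the Dirichlet problem $-\Delta u=\sigma(w)^{-1}|\nabla\times\H|^2$, $u=u^0$ on $\p\O$, has a unique solution $u\in H^1(\O)$, with a bound uniform in $w$. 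Setting $T(w)=u$, the map $T$ sends $L^2(\O)$ into a fixed ball of $H^1(\O)$, hence is compact from $L^2$ into itself by Rellich, and is continuous since $w_n\to w$ in $L^2$ forces $\sigma(w_n)\to\sigma(w)$ in every $L^p$, $p<\infty$ (dominated convergence), which passes through all the linear solution operators. Schaefer's fixed-point theorem then yields $u=T(u)$, and $(u,\H)$ is the desired weak solution.

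\emph{Regularity, parts (i)--(ii).} I would bootstrap between the two equations. First apply Campanato's method — resting on the De Giorgi--Nash estimate — to \eqref{eqphi-1}: since $\h+\E^0\in L^q(\O)$ with $q>3$, one gets $\nabla\varphi\in L^{2,\mu}(\O)$ for some $\mu>1$ and $\varphi\in C^{0,\alpha}(\overline\O)$, whence $\nabla\times\H\in L^{2,\mu}(\O)$, so in the rewritten equation $-\Delta u=\nabla\cdot[(\varphi+\varphi^0)\nabla\times\H]+(\h+\h_1)\cdot\nabla\times\H$ both terms lie in $L^{2,\mu}(\O)$ and Lemma~\ref{M-Campanato3} gives $u\in C^{0,(\mu-1)/2}(\overline\O)$. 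Now $u$ is continuous and bounded, so $\sigma(u)$ is continuous, hence lies in VMO, and the Calder\'on--Zygmund $W^{1,p}$-theory for divergence-form equations with VMO coefficients upgrades \eqref{eqphi-1} to $\nabla\varphi\in L^q$; therefore $\nabla\times\H\in L^q$ and, by regularity for the div--curl system on the $C^2$ domain ($\nabla\cdot\H=0$, $\nu\cdot\H=0$), $\H\in W^{1,q}(\O,\Bbb R^3)$, while $-\Delta u=\sigma(u)^{-1}|\nabla\times\H|^2\in L^{q/2}$ with $q/2>3/2$ yields, by the $W^{2,p}$ Dirichlet theory (absorbing $u^0\in W^{1,q}$ through $\Delta u^0=\nabla\cdot\nabla u^0$ and using $W^{2,q/2}\hookrightarrow W^{1,q}$), $u\in W^{1,q}(\O)$; this is (i). For (ii), $\O$ is $C^{3,\alpha}$ and simply connected, so $\h=\h_1=\0$ and $\nabla\times\H=\sigma(u)(\nabla\varphi+\E^0)$ with $\varphi$ solving $\nabla\cdot[\sigma(u)(\nabla\varphi+\E^0)]=0$. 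Starting from (i), $u\in C^{0,1-3/q}(\overline\O)$ is bounded, and as $\sigma\in C^{1,\alpha}_{\loc}$ is locally Lipschitz, $\sigma(u)\in C^{0,\beta_0}(\overline\O)$ with $\beta_0=\min\{\alpha,1-3/q\}$; Schauder theory then gives $\varphi\in C^{1,\beta_0}$, hence $\nabla\times\H\in C^{0,\beta_0}$ and $\H\in C^{1,\beta_0}$ (div--curl regularity on the $C^{3,\alpha}$ domain), and then $-\Delta u=\sigma(u)^{-1}|\nabla\times\H|^2\in C^{0,\beta_0}$ together with $u^0\in C^{2,\alpha}$ gives $u\in C^{2,\beta_0}$. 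One further pass: $u\in C^{2,\beta_0}$ makes $\nabla(\sigma(u))=\sigma'(u)\nabla u\in C^{0,\alpha}$, i.e. $\sigma(u)\in C^{1,\alpha}$, so Schauder yields $\varphi\in C^{2,\alpha}$, $\nabla\times\H\in C^{1,\alpha}$, $\H\in C^{2,\alpha}$, $u\in C^{2,\alpha}$, proving (ii).

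\emph{Uniqueness, part (iii).} Let $(u_1,\H_1),(u_2,\H_2)$ be two weak solutions in the stated class; put $\tilde u=u_1-u_2\in H^1_0(\O)$ and $\tilde\H=\H_1-\H_2$. I would first record, from the estimates behind (i), a bound $\|\nabla\times\H_i\|_{L^q(\O)}\le C\,\|\E^0\|_{L^q(\O)}<C\eta$ with $C$ \emph{independent of the solution}: testing the Maxwell weak form with $\H_i$ and using $\nabla\times\E^0=\0$ gives $\|\nabla\times\H_i\|_{L^2}\le\sigma_2\|\E^0\|_{L^2}$, the Helmholtz decomposition gives $\|\h_i\|_{L^2}\le C\|\E^0\|_{L^2}$, the Campanato estimate for \eqref{eqphi-1} (constants depending only on ellipticity) gives $\|\nabla\varphi_i\|_{L^{2,\mu}}\le C\|\E^0\|_{L^q}$ and hence $\|u_i\|_{C^{0,\gamma}(\overline\O)}\le C(\|u^0\|_{W^{1,q}}+\|\E^0\|_{L^q}^2)$, so the $u_i$ have a uniformly bounded range and a uniform modulus of continuity; therefore $\sigma(u_i)$ has a uniform VMO modulus, making the $W^{1,q}$ Calder\'on--Zygmund estimate for \eqref{eqphi-1} uniform. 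Then I would test the difference of the two (boundary-term-free) Maxwell weak formulations with the admissible field $\tilde\H$: using $|\sigma(u_1)^{-1}-\sigma(u_2)^{-1}|\le\sigma_1^{-2}L|\tilde u|$ and H\"older--Sobolev one gets $\|\nabla\times\tilde\H\|_{L^2}\le CL\eta\,\|\nabla\tilde u\|_{L^2}$; and test the difference of the temperature equations with $\tilde u$, writing $|\nabla\times\H_1|^2-|\nabla\times\H_2|^2=(\nabla\times\H_1+\nabla\times\H_2)\cdot\nabla\times\tilde\H$, to obtain, using \eqref{Lip-L}, $\|\nabla\times\H_i\|_{L^3}\le C\eta$ and the previous bound, $\|\nabla\tilde u\|_{L^2(\O)}^2\le CL\eta^2\,\|\nabla\tilde u\|_{L^2(\O)}^2$. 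Choosing $\eta$ so that $CL\eta^2<1$ forces $\tilde u\equiv0$, then $\nabla\times\tilde\H=\0$, and since $\tilde\H$ is then a harmonic Neumann field lying in $\Bbb H_N(\O)^\perp$, $\tilde\H=\0$.

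\emph{Main obstacle.} The recurring technical difficulty is the $L^1$-only right-hand side $\sigma(u)^{-1}|\nabla\times\H|^2$ of the temperature equation, which is what makes the existence scheme and the regularity bootstrap non-routine; it is handled throughout by the divergence-form rewriting \eqref{equ-6} built on the div--curl decomposition \eqref{decom-3.1}. For (iii) the extra difficulty is quantitative: one must check that every constant — the De Giorgi--Nash exponent, the Campanato and Calder\'on--Zygmund constants, and the bound $\|\h_i\|\le C\|\E^0\|$ — is uniform over the set of weak solutions, so that the smallness of $\|\E^0\|_{L^q}$ genuinely closes the final energy inequality.
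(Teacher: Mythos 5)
Your plan follows the paper's own route almost step for step: a fixed point in the temperature variable with $\H$ slaved through the decomposition \eqref{decom-3.1}--\eqref{eqphi-1}, De Giorgi--Nash boundedness of $\varphi$, the divergence-form rewriting \eqref{equ-6} to make sense of the $L^1$ Joule term, Campanato plus VMO Calder\'on--Zygmund bootstrap for (i), a Schauder bootstrap for (ii), and for (iii) a solution-independent bound $\|\nabla\times\H\|_{L^q}\leq C_*\|\E^0\|_{L^q}$ followed by an energy contraction estimate for the differences. Two of your justifications, however, are wrong as stated and need the paper's repairs. First, on a merely Lipschitz domain the harmonic fields in $\Bbb H_D(\O)$ and $\Bbb H_N(\O)$ are \emph{not} ``smooth'' up to the boundary, nor even bounded; the paper invokes Lemma \ref{Hsp} to place $\h$ in $L^3(\O,\Bbb R^3)$, so that $(\h+\h_1)\cdot\nabla\times\H$ lies only in $L^{6/5}(\O)\subset H^{-1}(\O)$, not in $L^2(\O)$ as you claim. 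This is still enough for Lax--Milgram, so your scheme survives, but the boundedness you use is only available once $\p\O$ is $C^{1,1}$ or better. Second, in part (ii) simple connectivity kills $\Bbb H_N(\O)$ (so $\h_1=\0$) but \emph{not} $\Bbb H_D(\O)$: a spherical shell is simply connected with $\dim\Bbb H_D(\O)=1$, so you may not set $\h=\0$. The correct fix, as in the paper's proof of Theorem \ref{Thm-Holder}, is to keep $\h$ and use that $\Bbb H_D(\O)\subset C^{2,\a}(\overline{\O},\Bbb R^3)$ on a $C^{3,\a}$ domain; with that change your bootstrap $\sigma(u)\in C^{0,\beta_0}\Rightarrow\varphi\in C^{1,\beta_0}\Rightarrow\H\in C^{1,\beta_0}\Rightarrow u\in C^{2,\beta_0}\Rightarrow\sigma(u)\in C^{1,\a}\Rightarrow(u,\H)\in C^{2,\a}\times C^{2,\a}$ goes through verbatim.

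Two smaller points. In the existence step you produce $\varphi$ first and say $\h$ ``is fixed by the div--curl solvability conditions''; since $\varphi$ in \eqref{eqphi-1} itself depends on $\h$, this requires an extra (finite-dimensional, but not free) solvability argument. The paper avoids it by solving for $\H$ directly via Lax--Milgram on $H_0(\divg0,\O)\cap H(\curl,\O)\cap\Bbb H_N(\O)^\perp$, coercive by the Maxwell compactness/Poincar\'e inequality, and only then decomposing $\sigma(w)^{-1}\nabla\times\H-\E^0$. Likewise, continuity of the fixed-point map is not just ``dominated convergence passed through linear operators'': to pass to the limit in the product terms one needs the uniform bound \eqref{fai-c0a} and an Arzel\`a--Ascoli argument giving $\varphi_k\to\varphi_0$ in $C^0(\overline{\O})$, exactly as in the paper's proof of Theorem \ref{PZ1}. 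Your use of $-\Delta u\in L^{q/2}$ plus $W^{2,q/2}\hookrightarrow W^{1,q}$ in (i), in place of the paper's rewriting \eqref{equ-5}, is a legitimate minor variant, and your uniqueness argument is the paper's Lemma \ref{unique} in disguise.
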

Theorem \ref{main1} will be proved through Theorems \ref{PZ1}, \ref{Thm-Lp-reg}, \ref{Thm-Holder} and \ref{Thm-uniq}.

\begin{Thm}\label{main2}
Let $\O$ be a bounded Lipschitz domain in $\Bbb R^3$. Assume the function $\sigma$ satisfies  \eqref{cond-rho}, $u^0\in H^1(\O)$, and $\H^0\in W^{1,q}(\O,\Bbb R^3)$ for some $q>3$.
Then \eqref{Pan-eq} has a weak solution $(u,\H)\in H^1(\O)\times [H(\divg0,\O)\cap H(\curl,\O)]$. Furthermore we have:
\begin{itemize}
\item[(i)] If $\O$ is of class $C^2$ and $u^0\in W^{1,q}(\O)$, then $(u,\H)\in W^{1,q}(\O)\times W^{1,q}(\O,\Bbb R^3)$;
\item[(ii)] If $\O$ is of class $C^{3,\a}$ and without holes, $\sigma\in C^{1,\a}_{\loc}(\Bbb R)$ with $\a\in (0,1)$,  and if
\eq\label{u0H0alpha}
(u^0,\H^0)\in C^{2,\a}(\overline{\O})\times C^{2,\a}(\overline{\O},\Bbb R^3),
\eeq
then $(u, \H)\in C^{2,\a}(\overline{\O})\times C^{2,\a}(\overline{\O},\Bbb R^3).$
\item[(iii)]  Assume that $\O$ is of class $C^2$,  $\s$ satisfies \eqref{Lip-L}, and $u^0\in W^{1,q}(\O)$. Then there exists $\eta>0$ such that if
\eq\label{small-H0}
\|\nabla\times\H^0\|_{L^q(\O)}<\eta,
\eeq
then the weak solution of \eqref{Pan-eq} in the space $H^1(\O)\times [H(\divg0,\O)\cap H(\curl,\O)\cap(\H^0+\Bbb H_D(\O)^\perp)]$ is unique.
\end{itemize}
\end{Thm}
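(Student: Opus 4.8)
The plan is to mirror the architecture of Theorem~\ref{main1}, organizing the proof into four parts paralleling Theorems~\ref{PZ1}, \ref{Thm-Lp-reg}, \ref{Thm-Holder} and \ref{Thm-uniq}: construct a weak solution by a Schauder fixed point, bootstrap its regularity, and obtain uniqueness under a smallness hypothesis by an energy estimate. For existence I would fix $v\in L^2(\Omega)$ and first solve the linear magnetic subproblem: find $\H\in H(\divg0,\Omega)\cap H(\curl,\Omega)$ with $\nu\times\H=\nu\times\H^0$ on $\partial\Omega$ and $\int_\Omega\sigma(v)^{-1}(\nabla\times\H)\cdot(\nabla\times\boldsymbol\phi)\,dx=0$ for all admissible test fields $\boldsymbol\phi$. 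Via the Hodge--Helmholtz decomposition and a Gaffney/Friedrichs-type inequality for fields in $H(\curl,\Omega)\cap H(\divg,\Omega)$ this is solvable, and it becomes uniquely solvable once the harmonic Dirichlet component of $\H$ is normalized (say $\H\in\H^0+\Bbb H_D(\Omega)^\perp$); testing with $\H-\H^0$ and using $\sigma_1\le\sigma(v)\le\sigma_2$ yields $\|\nabla\times\H\|_{L^2(\Omega)}\le(\sigma_2/\sigma_1)\|\nabla\times\H^0\|_{L^2(\Omega)}$, \emph{uniformly in $v$}, and continuity of $v\mapsto\H$ into $H(\curl,\Omega)$ follows from this bound together with dominated convergence applied to $\sigma(v)^{-1}$, where the continuity and boundedness of $\sigma$ enter.

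The subtler step is to solve $-\Delta u=\sigma(v)^{-1}|\nabla\times\H|^2$ in $\Omega$, $u=u^0$ on $\partial\Omega$, and reach $u\in H^1(\Omega)$ rather than merely $W^{1,p}(\Omega)$ with $p<3/2$, because a nonnegative $L^1$ datum need not lie in $H^{-1}(\Omega)$. Following the idea of the Introduction, I would exploit the divergence structure: since $\nabla\times[\sigma(v)^{-1}\nabla\times\H]=\0$, the Hodge decomposition gives $\sigma(v)^{-1}\nabla\times\H=\nabla\varphi+\h$ with $\varphi\in H^1(\Omega)$ and $\h\in\Bbb H_N(\Omega)$, and then $\nabla\cdot(\nabla\times\H)=0$ yields the identity $\sigma(v)^{-1}|\nabla\times\H|^2=\nabla\cdot(\varphi\,\nabla\times\H)+\h\cdot\nabla\times\H$, where $\varphi$ weakly solves a linear divergence-form equation with bounded measurable coefficient $\sigma(v)$ (of Neumann type, its boundary data built from $\nu\times\H^0$). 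The De Giorgi--Nash estimate (together with the structure of that boundary data, using $\H^0\in W^{1,q}$) gives $\varphi\in L^\infty(\Omega)$, so $\varphi\,\nabla\times\H\in L^2(\Omega)$ and $\h\cdot\nabla\times\H\in L^2(\Omega)$, hence both summands lie in $H^{-1}(\Omega)$ and $u\in H^1(\Omega)$. Since $W^{1,p}(\Omega)\hookrightarrow\hookrightarrow L^2(\Omega)$ for $6/5<p<3/2$, the solution operator $T\colon v\mapsto u$ maps $L^2(\Omega)$ continuously into a fixed precompact convex set, so Schauder's fixed point theorem produces $u=Tu$, and the associated pair $(u,\H)$ is the desired weak solution.

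For the regularity statements I would improve $\H$ first. Since $\H^0\in W^{1,q}(\Omega)$ with $q>3$, the data of the $\varphi$-equation lies in $L^q$, and Campanato's method (the Campanato/Morrey lemmas of Section~2, in particular Lemma~\ref{M-Campanato3}), combined with the De Giorgi--Nash H\"older bound on $\varphi$, gives $\nabla\varphi\in L^{2,\mu}(\Omega)$ for some $\mu>1$, hence $\nabla\times\H\in L^{2,\mu}(\Omega)$ and $\varphi\in C^{0,\alpha}(\overline\Omega)$; then $\varphi\,\nabla\times\H$ and $\h\cdot\nabla\times\H$ lie in $L^{2,\mu}(\Omega)$, so Lemma~\ref{M-Campanato3} upgrades $u$ to $W^{1,q}(\Omega)$, and on a $C^2$ domain with $u^0\in W^{1,q}$ the regularity theory for the div--curl system then gives $\H\in W^{1,q}(\Omega,\Bbb R^3)$, which is (i). For (ii), on a $C^{3,\alpha}$ domain without holes one has $\Bbb H_N(\Omega)=0$, so $\sigma(u)^{-1}\nabla\times\H=\nabla\varphi$; starting from $u\in C^{0,\beta}$ the chain $\sigma(u)\in C^{0,\beta}\Rightarrow\varphi\in C^{1,\beta}\Rightarrow\nabla\times\H\in C^{0,\beta}\Rightarrow\H\in C^{1,\beta}\Rightarrow u\in C^{2,\beta}$ makes $\nabla u$ Lipschitz, hence $\sigma(u)\in C^{1,\alpha}$, and one more round closes the bootstrap at $(u,\H)\in C^{2,\alpha}$, the regularity of $u^0$ and $\H^0$ being the bottleneck.

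For uniqueness (iii), let $(u_1,\H_1)$ and $(u_2,\H_2)$ be two solutions in the stated space on a $C^2$ domain. Subtracting the magnetic equations, testing with $\H_1-\H_2$ (admissible since $\nu\times(\H_1-\H_2)=\0$), and using \eqref{Lip-L}, H\"older's inequality and $H^1(\Omega)\hookrightarrow L^r(\Omega)$ for $r<6$, gives $\|\nabla\times(\H_1-\H_2)\|_{L^2}\le CL\,\|\nabla\times\H_2\|_{L^q}\,\|u_1-u_2\|_{H^1}$, and by part (i) together with \eqref{small-H0}, $\|\nabla\times\H_i\|_{L^q}\le C\|\nabla\times\H^0\|_{L^q}<C\eta$. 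Subtracting the $u$-equations, testing with $u_1-u_2\in H^1_0(\Omega)$, using the factorization $|\nabla\times\H_1|^2-|\nabla\times\H_2|^2=(\nabla\times(\H_1+\H_2))\cdot(\nabla\times(\H_1-\H_2))$ with \eqref{Lip-L}, the previous inequality and the smallness of $\|\nabla\times\H_i\|_{L^q}$, and then Poincar\'e's inequality, yields $\|\nabla(u_1-u_2)\|_{L^2}^2\le CL\eta^2\|\nabla(u_1-u_2)\|_{L^2}^2$; choosing $\eta$ with $CL\eta^2<1$ forces $u_1=u_2$, whence $\nabla\times(\H_1-\H_2)=\0$ and, $\H_1-\H_2$ being then a harmonic Dirichlet field lying in $\Bbb H_D(\Omega)^\perp$, also $\H_1=\H_2$. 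I expect the heart of the matter to be the $u$-equation: the quadratic gradient term $\sigma(u)^{-1}|\nabla\times\H|^2$ is only $L^1$, and everything hinges on its nonobvious recasting as $\nabla\cdot(\varphi\,\nabla\times\H)+\h\cdot\nabla\times\H$ after a Hodge decomposition, which demands control of the auxiliary potential $\varphi$ (via De Giorgi--Nash for existence, via Campanato estimates for higher regularity) and careful bookkeeping of the harmonic fields---here $\Bbb H_N(\Omega)$ and the hypothesis ``without holes'' in (ii), in contrast to $\Bbb H_D(\Omega)$ and ``simply connected'' for \eqref{PZ0}; the rest is standard elliptic and div--curl regularity theory.
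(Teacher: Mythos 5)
Your plan follows the paper's actual route for \eqref{Pan-eq} (auxiliary Neumann-type problem for $\varphi$ with $\h\in\Bbb H_N(\O)$ from the decomposition $\sigma^{-1}\nabla\times\H=\nabla\varphi+\h$, a De Giorgi--Nash bound on $\varphi$, the divergence rewriting of $\sigma^{-1}|\nabla\times\H|^2$, Schauder's fixed point, then Campanato and div--curl estimates, and an energy argument for uniqueness), but the step you assign to Lemma \ref{M-Campanato3} cannot do what you claim. That lemma (and its Neumann counterpart Lemma \ref{M-Campanato3.5}, which is the one actually relevant to the $\varphi$-equation here, since $\varphi$ solves a Neumann problem with flux $\nu\cdot\nabla\times\H^0$) only yields $\nabla u\in L^{2,\mu}(\O)$ for $\mu<1+2\delta<3$, i.e.\ H\"older continuity of $u$; it does not give $u\in W^{1,q}(\O)$ with $q>3$, and $\nabla\times\H\in L^{2,\mu}(\O)$ is likewise far from $L^q$. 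The missing link, which is exactly how the paper closes part (i), is: the H\"older continuity of $u$ makes $\sigma(u)$ continuous, hence VMO (or H\"older under \eqref{Lip-L}), so the $W^{1,q}$ theory for divergence-form operators with VMO/H\"older coefficients (\cite{AQ2002}, \cite{Tro1987}) applies to the $\varphi$-equation and yields $\nabla\varphi\in L^q(\O)$, hence $\nabla\times\H=\sigma(u)(\nabla\varphi+\h)\in L^q(\O,\Bbb R^3)$; only then do the div--curl estimates give $\H\in W^{1,q}(\O,\Bbb R^3)$, and finally rewriting the $u$-equation as $-\Delta u=\nabla\cdot[\H\times\sigma(u)^{-1}\nabla\times\H]$ gives $u\in W^{1,q}(\O)$. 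Your order (Campanato gives $u\in W^{1,q}$ first, then div--curl gives $\H$) cannot be run as written.

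Two further points. In (ii) you use ``without holes'' to conclude $\Bbb H_N(\O)=\{\0\}$; this is backwards: no holes means $\p\O$ is connected and $\Bbb H_D(\O)=\{\0\}$, whereas $\Bbb H_N(\O)$ vanishes under simple connectivity. The Neumann field $\h$ in the decomposition in general survives (it is smooth on smooth domains, so it does not obstruct the bootstrap); the hypothesis is needed where you did not place it, namely in the H\"older div--curl step for $\H$ under the tangential condition $\nu\times\H=\nu\times\H^0$, whose kernel is $\Bbb H_D(\O)$ (cf.\ \cite{BP2007}). In (iii), the bound $\|\nabla\times\H_i\|_{L^q(\O)}\le C\|\nabla\times\H^0\|_{L^q(\O)}$ must hold with $C$ independent of the solution, otherwise the choice of $\eta$ is circular; part (i) does not provide this, because the $W^{1,q}$ constant obtained through VMO coefficients depends on the VMO modulus of $\sigma(u_i)$, i.e.\ on the solution. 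One needs the analogue of Theorem \ref{Thm-Lp-reg}(iii) and Remark \ref{Rem-key}: first take $\eta\le 1$, derive a data-only bound for $\|u_i\|_{C^{0,(\mu-1)/2}(\overline{\O})}$, hence for $\|\sigma(u_i)\|_{C^{0,(\mu-1)/2}(\overline{\O})}$ via \eqref{Lip-L}, and then apply the H\"older-coefficient $L^q$ estimate to the Neumann problem for $\varphi$. With that a priori bound your energy argument (the analogue of Lemma \ref{unique}) does close; note also that $u_1-u_2$ is a legitimate test function because under the hypotheses of (iii) both $u_i$ are bounded (or argue by density, the right-hand side lying in $H^{-1}(\O)$).
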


Theorem \ref{main2} will be proved in Section 6.

This paper is organized as follows. In Section 2 we collect some preliminary results that will be needed in the later sections. In Section 3 we prove existence of weak solutions for system \eqref{PZ0} by using Schauder's fixed point theorem. Regularity of the weak solutions is given in Section 4. Uniqueness under the condition of small boundary data is proved in Section 5. In Section 6 we discuss system \eqref{Pan-eq}.

\vskip0.1in

\section{Preliminaries}


Let $\O$ be a bounded Lipschitz domain in $\Bbb R^3$. Let $\mathcal{D}(\O,\Bbb R^3)$ denote the space of $3$-dimensional vector-valued functions that are infinitely differentiable and have compact supports in $\O$, and $\mathcal{D}'(\O,\Bbb R^n)$ denote its dual space.
We use $L^p(\O)$, $W^{k,p}(\O)$ and  $C^{k,\a}(\overline\O)$ to denote the usual Lebesgue spaces, Sobolev spaces and H\"older spaces for scalar functions, and use $L^p(\O,\Bbb R^3)$, $W^{k,p}(\O,\Bbb R^3)$ and  $C^{k,\a}(\overline\O,\Bbb R^3)$  to denote the corresponding spaces of vector fields. However we use  the same notation to denote both the norm of scalar functions and that for vector fields in the corresponding spaces. For instance, we write $\|\phi\|_{L^p(\O)}$ for $\phi\in L^p(\O)$ and write $\|\u\|_{L^p(\O)}$ for $\u\in L^p(\O,\Bbb R^3)$.

In the study of problems \eqref{PZ0} and \eqref{Pan-eq}, the topology of the domain $\O$ plays important roles. The domain topology is well represented by the spaces of harmonic Neumann fields $\Bbb H_N(\O)$ and of harmonic Dirichlet fields $\Bbb H_D(\O)$: \footnote{One may also denote $\Bbb H_N(\O)$ by $\Bbb H_1(\O)$, and denote $\Bbb H_D(\O)$ by $\Bbb H_2(\O)$.}
$$
\aligned
&\Bbb H_N(\O)=\{\u\in L^2(\O,\Bbb R^3): \nabla\times\u=\0,\; \nabla\cdot\u=0\text{ in $\O$,}\; \nu\cdot\u=0 \text{ on $\p\O$}\},\\
&\Bbb H_D(\O)=\{\u\in L^2(\O,\Bbb R^3): \nabla\times\u=\0,\;\nabla\cdot\u=0\text{ in $\O$,}\; \nu\times\u=\0 \text{ on $\p\O$}\}.
\endaligned
$$
The dimension of $\Bbb H_N(\O)$ is equal to the number of `handles' of $\O$, and the dimension of $\Bbb H_D(\O)$ is equal to the number of holes in $\O$. In particular, if $\O$ is simply-connected, i.e. if $\O$ has no `handles', then $\Bbb H_N(\O)=\{\0\}$; and if $\O$ has no holes, then $\Bbb H_D(\O)=\{\0\}$.
The harmonic Neumann or Dirichlet fields enjoy good regularities. In fact, for $C^{1,1}$ domains we have $\Bbb H_N(\O)$, $\Bbb H_D(\O)\subset W^{1,p}(\O,\Bbb R^3)$ for any $1<p<\infty$, see \cite[Corollary 4.1, Corollary 4.2]{AS2013}. Thus by Morrey embedding, we have $\Bbb H_N(\O)$, $\Bbb H_D(\O)\subset C^{0,1-3/p}(\overline{\O},\Bbb R^3)$ for any $3<p<\infty$. Furthermore, if $\O$ is of class $C^{r,\theta}$, where $r\geq 2$ and $0<\theta<1$, then
$\Bbb H_N(\O)$, $\Bbb H_D(\O)\subset C^{r-1,\theta}(\overline{\O},\Bbb R^3)$, see \cite[p. 219-222]{DL1990}.

We also use the following notation:
$$\aligned
&H(\divg0,\O)=\{\u\in L^2(\O,\Bbb R^3): \nabla\cdot\u=0\;\text{in }\O\},\\
&H_0(\divg0,\O)=\{\u\in H(\divg0,\O):  \nu\cdot\u=0\;\text{on }\p\O\},\\
&H(\curl,\O)=\{\u\in L^2(\O,\Bbb R^3): \nabla\times\u\in L^2(\O,\Bbb R^3)\},\\
&H_0(\curl,\O)=\{\u\in H(\curl,\O): \nu\times\u=\0\;\text{on }\p\O\}.
\endaligned
$$
For $1<p<\infty$ we denote
$$
\mH^p(\O,\curl,\div)=\{\u\in L^p(\O,\Bbb R^3): \nabla\times\u\in L^p(\O,\Bbb R^3),\; \nabla\cdot\u\in L^p(\O)\}.
$$
If $X(\O)$ denotes a space of scalar functions defined on $\O$, then we write
$$\dot X(\O)=\{\phi\in X(\O): \int_\O \phi(x) dx=0\}.
$$

Recall the Helmholtz-Weyl decompositions of $L^2(\O,\Bbb R^3)$ on Lipschitz domain (see \cite[Theorem 5.3]{BPS2016} or \cite[(1.19) on p.156]{Pi1984}):
$$
\aligned
&L^2(\O,\Bbb R^3)=[\nabla \times H(\curl,\O)]\oplus\nabla H_0^1(\O)\oplus\Bbb H_D(\O),\\
&L^2(\O,\Bbb R^3)=[\nabla \times H_0(\curl,\O)]\oplus\nabla H^1(\O)\oplus\Bbb H_N(\O).
\endaligned
$$
As a direct corollary, we have the following decompositions for curl-free vector fields, which will be used frequently in this paper:

\begin{Lem}\label{curl0}
Assume $\O$ is a bounded Lipschitz domain in $\Bbb R^3$, and $\u\in L^2(\O,\Bbb R^3)$ with $\nabla\times\u=\0$ in $\O$.  Then the following conclusions are true.
\begin{enumerate}
\item[(i)] $\u$ can be decomposed in the form
$\u=\nabla\varphi+\h$, where $\varphi\in \dot H^1(\O)$ and $\h\in \Bbb H_N(\O)$.
\item[(ii)] If furthermore $\nu\times\u=\0$ on $\p\O$ in the sense of trace, then $\u$ can be decomposed in the form $\u=\nabla\varphi+\h$, where $\varphi\in H_0^1(\O)$ and $\h\in \Bbb H_D(\O)$.
\end{enumerate}
\end{Lem}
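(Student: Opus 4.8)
The plan is to read both conclusions directly off the two Helmholtz--Weyl decompositions recalled above; the only thing that needs checking is that the component of $\u$ lying in the ``curl space'' is forced to vanish (using, in part (ii), the tangential boundary condition). Note first that $\nabla\times\u=\0\in L^2(\O,\Bbb R^3)$, so $\u\in H(\curl,\O)$ and in particular the tangential trace $\nu\times\u$ appearing in (ii) is well defined.

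For (i), apply the second decomposition to write $\u=\nabla\times\w+\nabla\varphi+\h$ with $\w\in H_0(\curl,\O)$, $\varphi\in H^1(\O)$, $\h\in\Bbb H_N(\O)$, the three summands being mutually $L^2$-orthogonal. Put $\z:=\nabla\times\w$. Then $\nabla\cdot\z=0$ automatically, and taking the curl of the identity together with $\nabla\times\u=\0$, $\nabla\times\nabla\varphi=\0$, $\nabla\times\h=\0$ gives $\nabla\times\z=\0$ in $\mathcal D'(\O,\Bbb R^3)$, hence $\z\in H(\curl,\O)$ with zero curl. Moreover, for every $\psi\in H^1(\O)$ the integration-by-parts formula for fields in $H_0(\curl,\O)$ yields $\int_\O(\nabla\times\w)\cdot\nabla\psi\,dx=\int_{\p\O}(\nu\times\w)\cdot\nabla\psi\,dS=0$, which is precisely the statement that $\nabla\cdot\z=0$ in $\O$ and $\nu\cdot\z=0$ on $\p\O$ in the weak sense. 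Thus $\z\in\Bbb H_N(\O)$; but $\z\in\nabla\times H_0(\curl,\O)$, which is orthogonal to $\Bbb H_N(\O)$ in this decomposition, so $\z=\0$. Therefore $\u=\nabla\varphi+\h$, and subtracting from $\varphi$ its mean over $\O$ we may take $\varphi\in\dot H^1(\O)$.

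For (ii), argue symmetrically using the first decomposition: write $\u=\nabla\times\w+\nabla\varphi+\h$ with $\w\in H(\curl,\O)$, $\varphi\in H_0^1(\O)$, $\h\in\Bbb H_D(\O)$, mutually orthogonal. Again $\z:=\nabla\times\w$ satisfies $\nabla\cdot\z=0$ and, taking curls as before, $\nabla\times\z=\0$. This time we invoke the extra hypothesis: since $\varphi\in H_0^1(\O)$, its tangential gradient on $\p\O$ vanishes, i.e. $\nu\times\nabla\varphi=\0$, and $\nu\times\h=\0$ because $\h\in\Bbb H_D(\O)$; hence $\nu\times\z=\nu\times\u-\nu\times\nabla\varphi-\nu\times\h=\0$ on $\p\O$. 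Therefore $\z\in\Bbb H_D(\O)$; but $\z\in\nabla\times H(\curl,\O)$, which is orthogonal to $\Bbb H_D(\O)$ in the first decomposition, so $\z=\0$ and $\u=\nabla\varphi+\h$ with $\varphi\in H_0^1(\O)$ and $\h\in\Bbb H_D(\O)$.

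The decomposition and the orthogonality of its summands do the real work; the only genuinely analytic ingredients are the two elementary trace identities used above --- that $\nabla\times\w$ has zero weak normal trace when $\w\in H_0(\curl,\O)$, and that $\nu\times\nabla\varphi=\0$ when $\varphi\in H_0^1(\O)$ --- each of which holds for smooth fields and extends by density via the continuity of the corresponding tangential/normal trace operators on Lipschitz domains. Accordingly I expect no serious obstacle here: the statement is, as indicated, a direct corollary of the Helmholtz--Weyl decompositions.
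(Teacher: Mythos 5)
Your proof is correct and follows exactly the route the paper intends: the paper states the lemma as a direct corollary of the two Helmholtz--Weyl decompositions and gives no further argument, while you supply the (right) details, namely that the $\nabla\times$-component is itself a harmonic Neumann (resp.\ Dirichlet) field and hence vanishes by orthogonality of the summands. The trace identities you invoke are the standard ones on Lipschitz domains, so there is no gap.
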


A regularity result for $\divg$-$\curl$ system is also an important ingredient in our proof of existence of weak solutions to \eqref{PZ0} and \eqref{Pan-eq}.
We use $H^{s,p}(\O,\Bbb R^3)$ to denote fractional order Sobolev space.

\begin{Lem}\label{Hsp} $($\cite[Theorem 11.2]{MMT2001}$)$
For any bounded Lipschitz domain $\O$ in $\Bbb R^3$, there exists $\epsilon=\epsilon(\O)>0$ with the following significance. Let $p\in (2-\epsilon,2+\epsilon)$. Set $s=1/p$ if $p\geq2$ and $s=1-1/p$ if $p\leq2$. Assume that $\u\in \mH^p(\O,\curl,\div)$ is such that either $\nu\cdot\u\in L^p(\p\O)$ or $\nu\times\u\in L^p(\p\O,\Bbb R^3)$. Then
$$\u\in \bigcap_{\mu>0}H^{s-\mu,p}(\O,\Bbb R^3),
$$
and, for each $\mu>0$, there exists $C=C(\mu,p,\O)$ such that
$$
\aligned
\|\u\|_{H^{s-\mu,p}(\O)}&\leq C(\|\u\|_{L^p(\O)}+\|\nabla\cdot\u\|_{L^p(\O)}+\|\nabla\times\u\|_{L^p(\O)}\\
&\qq +\min\{\|\nu\cdot\u\|_{L^p(\p\O)},\|\nu\times\u\|_{L^p(\p\O)}\}).
\endaligned
$$
When $p=2$,  the above inequality remains true for $\mu=0$.
\footnote{The conclusion in the case where $p=2$ has been also obtained by Costabel \cite{Co1990}.}
\end{Lem}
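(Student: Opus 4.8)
The plan is to deduce Lemma \ref{Hsp} from the $L^p$ theory of the Dirichlet and Neumann problems for the scalar Laplacian on Lipschitz domains, following the circle of ideas of Dahlberg, Jerison--Kenig and Verchota and the systematic treatment via Hodge decompositions in \cite{MMT2001}. First I would treat the case $\nu\cdot\u\in L^p(\pO)$. Since $\divg\u=:f\in L^p(\O)$ and $\nu\cdot\u=:g\in L^p(\pO)$ satisfy the compatibility relation $\int_\O f=\int_{\pO}g$, one solves the inhomogeneous Neumann problem $\Delta\phi=f$ in $\O$, $\p_\nu\phi=g$ on $\pO$; the $L^p$-Neumann theory, valid for $p$ in a neighborhood of $2$, produces a solution with $\d\phi$ in the sharp boundary-regularity class, a Besov space $B^{1/p}_{p,\infty}$, which embeds into $H^{1/p-\mu,p}(\O,\R^3)$ for every $\mu>0$, and coincides with $H^{1/2,2}$ when $p=2$ (recovering Costabel's endpoint, cf.\ the footnote).

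Then $\w:=\u-\d\phi$ satisfies $\divg\w=0$ in $\O$, $\nu\cdot\w=0$ on $\pO$, and $\curl\w=\curl\u\in L^p(\O,\R^3)$. To recover $\w$ I would construct a vector potential, solving a $\curl$-$\curl$ (equivalently Hodge-Laplacian) problem with the appropriate homogeneous tangential boundary condition to obtain $\mathbf A$ with $\curl\mathbf A=\w$, $\divg\mathbf A=0$, and $\mathbf A$ gaining one derivative in the relevant scale; near $\pO$, after flattening the Lipschitz boundary, this reduces component by component to scalar second-order problems with $L^p$ data, whose solution operators gain exactly the $H^{s,p}$ smoothness with the stated $s$. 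Projecting off beforehand the finite-dimensional space $\Bbb H_N(\O)$ (resp.\ $\Bbb H_D(\O)$), which consists of smooth fields on smooth domains, and summing the contributions $\d\phi+\curl\mathbf A$ yields $\u\in H^{s-\mu,p}(\O,\R^3)$ with the asserted estimate controlled by $\|\u\|_{L^p(\O)}$, $\|\divg\u\|_{L^p(\O)}$, $\|\curl\u\|_{L^p(\O)}$ and the boundary norm. The case $\nu\times\u\in L^p(\pO,\R^3)$ is handled by the dual decomposition, interchanging the roles of the Dirichlet and Neumann problems and of $\Bbb H_N(\O)$, $\Bbb H_D(\O)$; this is precisely why the exponent becomes $s=1-1/p$ when $p\le2$.

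The main obstacle — and the reason for the restriction to $p\in(2-\epsilon,2+\epsilon)$ — is the invertibility of the boundary layer potential operators ($\tfrac12 I+K^*$ for the Neumann problem, $\tfrac12 I+K$ for Dirichlet) on $L^p(\pO)$: on a general Lipschitz domain this is only known in an interval around $2$, and $\epsilon=\epsilon(\O)$ is exactly that range, which can degenerate as the domain gets rougher. A second delicate point is tracking the \emph{sharp} endpoint regularity: a harmonic function with $L^p(\pO)$ data naturally lies in a Besov rather than a Sobolev space, so one only gets $H^{s-\mu,p}$ for $\mu>0$ in general, with $\mu=0$ permitted only at $p=2$ where the square-function estimate pins down $H^{1/2,2}$. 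Interpolating the $p=2$ result with the $L^p$ solvability for nearby $p$, together with careful localization, is what completes the argument.
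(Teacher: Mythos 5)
The paper does not prove this lemma at all: it is quoted verbatim from \cite[Theorem 11.2]{MMT2001} (with the $p=2$ case attributed to Costabel \cite{Co1990}), so your attempt must be measured against that cited work rather than against anything in the present paper. Your sketch does identify the right circle of ideas --- solvability of the scalar Dirichlet/Neumann problems with $L^p(\pO)$ data via layer potentials, the restriction $|p-2|<\epsilon$ coming from invertibility of $\tfrac12 I+K$ and $\tfrac12 I+K^*$ on Lipschitz boundaries, and the fact that the sharp endpoint is Besov rather than Sobolev, whence the loss $\mu>0$ except at $p=2$. But as a proof it has a genuine gap at its center.

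After you subtract $\nabla\phi$, the remaining field $\w$ with $\divg\w=0$, $\nu\cdot\w=0$, $\curl\w=\curl\u\in L^p(\O,\Bbb R^3)$ is precisely the hard case of the theorem, and your treatment of it --- produce $\mathbf{A}$ with $\curl\mathbf{A}=\w$, $\divg\mathbf{A}=0$ and assert that $\mathbf{A}$ ``gains one derivative,'' justified by flattening the boundary and reducing ``component by component to scalar second-order problems'' --- does not go through. A bi-Lipschitz flattening neither preserves harmonicity nor decouples the components of the Hodge Laplacian, and the boundary conditions ($\nu\times\mathbf{A}=\0$, respectively conditions on $\nu\cdot\mathbf{A}$) couple the components; the asserted one-derivative gain for the vector potential problem with merely $L^p$ data on a Lipschitz domain is essentially equivalent to the estimate being proved. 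This is exactly the point where Mitrea--Mitrea--Taylor must develop layer potentials for the Hodge Laplacian acting on forms, Rellich-type identities for differential forms, and invertibility of the associated boundary operators for $p$ near $2$; it cannot be borrowed from the scalar theory by a change of variables. Two further inaccuracies: the exponent $s$ in the lemma is $\min\{1/p,\,1-1/p\}$ and depends only on $p$, identically for the normal and tangential boundary conditions, so your explanation that $s=1-1/p$ for $p\le 2$ arises from ``interchanging the roles'' of the two conditions misreads the statement; and the passage from nontangential-maximal or $B^{1/p}_{p,\infty}$-type estimates for the Neumann problem to membership in $H^{s-\mu,p}(\O)$ itself requires the Jerison--Kenig-type Besov regularity results, not merely an embedding remark. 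Since the paper's own ``proof'' is the citation, the appropriate course here is to cite \cite{MMT2001} as well, unless you intend to reproduce that machinery in full.
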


We use $L^{2,\mu}(\O)$ to denote a Campanato space, which consists of scalar functions satisfying
$$\|u\|_{L^{2,\mu}(\O)}=\Big(\|u\|_{L^2(\O)}^2+\sup_{{x_0\in \overline{\O},}\atop {0<r<\infty}}r^{-\mu}\int_{\O_r(x_0)}|u-u_{x_0,r}|^2dx\Big)^{1/2}<\infty,
$$
where
$$\O_r(x_0)=\O\cap B_r(x_0),\q
u_{x_0,r}=\frac{1}{|\O_r(x_0)|}\int_{\O_r(x_0)}u(x)dx.
$$
Campanato spaces play a key role in our proof of regularity of weak solutions to \eqref{PZ0} and \eqref{Pan-eq}.
Below we list some properties for Campanato spaces, which can be found in \cite[Theorem 1.17, Lemma 1.19, Theorem 1.40]{Tro1987}.

\begin{Lem}\label{M-Campanato}
Assume $\O$ is a bounded $C^1$ domain in $\Bbb R^n$.
\begin{itemize}
\item[(i)] Let $0\leq \mu<n$. Then the mapping
$$u\mapsto \Big(\sup_{{x_0\in \overline{\O}}\atop {0<r<\infty}}r^{-\mu}\int_{\O_r(x_0)}u^2dx\Big)^{1/2}
$$
defines an equivalent norm on $L^{2,\mu}(\O)$. Hence $L^\infty(\O)$ is a space of multipliers for $L^{2,\mu}(\O)$. That is to say, for any $u\in L^{2,\mu}(\O)$ and any $v\in L^\infty(\O)$, we have
$$\|uv\|_{L^{2,\mu}(\O)}\leq C(n,\mu,\O)\|u\|_{L^{2,\mu}(\O)}\|v\|_{L^\infty(\O)}.
$$

\item[(ii)] Let $n<\mu\leq n+2$. Then $L^{2,\mu}(\O)$ is isomorphic to $C^{0,\delta}(\overline{\O})$ for $\delta=(\mu-n)/2$.

\item[(iii)]Let $0\leq \mu<n$. If $u\in H^1(\O)$ and $\nabla u\in L^{2,\mu}(\O)$, then $u\in L^{2,2+\mu}(\O)$ with
$$\|u\|_{L^{2,2+\mu}(\O)}\leq C(n,\mu,\O)(\|u\|_{L^2(\O)}+\|\nabla u\|_{L^{2,\mu}(\O)}).
$$

\item[(iv)] We have the following embedding:
$$\aligned
&L^{2,\lambda}(\O)\hookrightarrow L^{2,\mu}(\O)\q\;\text{\rm if } 0\leq \mu<\lambda\leq n+2,\\
&L^p(\O)\hookrightarrow L^{2,\mu}(\O)\q\q\text{\rm if }p>2,\;\; \mu=n(p-2)/p.
\endaligned
$$
\end{itemize}
\end{Lem}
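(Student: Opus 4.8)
\emph{Proof sketch.}
Every assertion describes the decay rate of the normalized mean oscillations $r^{-\mu}\int_{\O_r(x_0)}|u-u_{x_0,r}|^2\,dx$ as $r\to 0$, uniformly over $x_0\in\overline\O$, and the only geometric input I would use is what a bounded $C^1$ (in fact Lipschitz) domain supplies: the measure--density estimate $c_0r^n\le|\O_r(x_0)|\le C_0r^n$ for all $x_0\in\overline\O$ and $0<r\le r_0$ (with $r_0,c_0,C_0$ depending only on $\O$), the trivial identity $\O_r(x_0)=\O$ once $r\ge\mathrm{diam}\,\O$, and the scale--invariant Poincaré inequality $\|u-u_{x_0,r}\|_{L^2(\O_r(x_0))}\le Cr\|\nabla u\|_{L^2(\O_r(x_0))}$ with $C$ independent of $x_0$ and of $r\le r_0$. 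I would prove the four parts in the order (i), (iv), (iii), (ii).

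\emph{Part (i) and the multiplier property.} Write $[u]^2:=\sup_{x_0,r}r^{-\mu}\int_{\O_r(x_0)}|u-u_{x_0,r}|^2$; the only point to check is $\sup_{x_0,r}r^{-\mu}\int_{\O_r(x_0)}|u|^2\le C(\|u\|_{L^2}^2+[u]^2)$. By the standard comparison of dyadic means one gets $|u_{x_0,\rho}-u_{x_0,r}|^2\le C\rho^{-n}r^\mu[u]^2$ for $0<\rho\le r\le r_0$; summing the telescoping differences from scale $r_0$ down to $\rho$, a sum that is dominated by its last term precisely because $\mu<n$, bounds $|u_{x_0,\rho}|$ by $C\|u\|_{L^2}+C[u]\rho^{(\mu-n)/2}$, whence $\int_{\O_\rho}|u|^2\le 2[u]^2\rho^\mu+2|u_{x_0,\rho}|^2|\O_\rho|\le C(\|u\|_{L^2}^2+[u]^2)\rho^\mu$ for $\rho\le r_0$ (using $\rho^n\le(\mathrm{diam}\,\O)^{n-\mu}\rho^\mu$), the range $\rho\ge r_0$ being trivial. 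This gives the claimed equivalent norm, and then the multiplier assertion is immediate from $r^{-\mu}\int_{\O_r}|uv|^2\le\|v\|_{L^\infty}^2\,r^{-\mu}\int_{\O_r}|u|^2$ followed by a return to the genuine Campanato norm.

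\emph{Parts (iv) and (iii).} For $0\le\mu<\lambda\le n+2$ one writes $r^{-\mu}\int_{\O_r}|u-u_{x_0,r}|^2=r^{\lambda-\mu}\cdot r^{-\lambda}\int_{\O_r}|u-u_{x_0,r}|^2$ and uses that $r$ stays bounded (the range $r\ge\mathrm{diam}\,\O$ being absorbed into $\|u\|_{L^2}$); this is $L^{2,\lambda}\hookrightarrow L^{2,\mu}$. For $L^p\hookrightarrow L^{2,\mu}$ with $p>2$ and $\mu=n(p-2)/p$, Hölder's inequality on $\O_r(x_0)$ gives $\int_{\O_r}|u-u_{x_0,r}|^2\le|\O_r|^{1-2/p}\|u-u_{x_0,r}\|_{L^p(\O_r)}^2\le Cr^\mu\|u\|_{L^p(\O)}^2$ (the mean subtraction costing only a factor $2$ by Jensen). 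For (iii), the scale--invariant Poincaré inequality yields $\int_{\O_r}|u-u_{x_0,r}|^2\le Cr^2\int_{\O_r}|\nabla u|^2\le Cr^{2+\mu}\|\nabla u\|_{L^{2,\mu}}^2$ when $\mu<n$, which is exactly $u\in L^{2,2+\mu}$ with the stated bound.

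\emph{Part (ii), and the main obstacle.} For $n<\mu\le n+2$ set $\delta=(\mu-n)/2\in(0,1]$. Comparing consecutive dyadic means gives $|u_{x_0,r}-u_{x_0,2r}|^2\le C r^{-n}(2r)^\mu[u]^2=C[u]^2r^{2\delta}$, so the telescoping series of differences now converges (its ratio being $2^{-\delta}<1$) and $u_{x_0,r}$ tends as $r\to0$ to a value $\tilde u(x_0)$ with $|\tilde u(x_0)-u_{x_0,r}|\le C[u]r^\delta$; this is the continuous representative. For $x,y\in\overline\O$ with $d=|x-y|\le r_0$ one has $\O_d(x)\subset\O_{2d}(x)\cap\O_{2d}(y)$, hence $|u_{x,2d}-u_{y,2d}|^2\le C|\O_d(x)|^{-1}\big(\int_{\O_{2d}(x)}|u-u_{x,2d}|^2+\int_{\O_{2d}(y)}|u-u_{y,2d}|^2\big)\le C[u]^2d^{2\delta}$, and combining this with the previous estimate yields $|\tilde u(x)-\tilde u(y)|\le C[u]\,|x-y|^\delta$; together with $\|\tilde u\|_{C^0}\le C(\|u\|_{L^2}+[u])$ this is one direction of the isomorphism, the converse $C^{0,\delta}(\overline\O)\hookrightarrow L^{2,\mu}(\O)$ being immediate from $|u(x)-u_{x_0,r}|\le[u]_{C^{0,\delta}}r^\delta$ on $\O_r(x_0)$. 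The genuinely technical point in the whole lemma is precisely the uniformity, over all boundary points $x_0\in\p\O$ and all scales $r\le r_0$, of the measure--density estimate and of the Poincaré constant: this is what makes the dyadic iterations of (i) and (ii) and the Poincaré step of (iii) work uniformly up to the boundary. For a Lipschitz --- a fortiori $C^1$ --- domain it follows from the uniform cone/corkscrew condition (so that $\O_r(x_0)$ has Lipschitz character bounded independently of $x_0$ and $r$), but it is the step that has to be set up with care; everything else is just Hölder's and Poincaré's inequalities together with the summation of the dyadic series.
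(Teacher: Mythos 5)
Your proof sketch is correct, but there is nothing in the paper to compare it against: the authors do not prove Lemma \ref{M-Campanato} at all, they quote it directly from Troianiello's book (Theorem 1.17, Lemma 1.19 and Theorem 1.40 of \cite{Tro1987}). What you have written out is precisely the classical Campanato machinery on which those cited results rest: the measure--density estimate $|\O_r(x_0)|\geq c_0 r^n$ valid uniformly for $x_0\in\overline{\O}$ and $r\leq r_0$, the comparison of means $|u_{x_0,\rho}-u_{x_0,r}|^2\leq C\rho^{-n}r^{\mu}[u]^2$, and the dichotomy in the dyadic telescoping sum --- divergent geometric series dominated by its last term when $\mu<n$ (giving the equivalent norm in (i)), convergent series when $\mu>n$ (giving the continuous representative and the H\"older estimate in (ii) via the inclusion $\O_d(x)\subset\O_{2d}(x)\cap\O_{2d}(y)$) --- together with H\"older's inequality for the $L^p$ embedding in (iv) and the scale-invariant Poincar\'e inequality for (iii). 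All of these steps are carried out correctly, including the trivial treatment of scales $r\geq r_0$ and the use of $\rho^n\leq(\mathrm{diam}\,\O)^{n-\mu}\rho^{\mu}$, and your diagnosis of the one genuinely delicate point --- the uniformity, over boundary points and small scales, of the measure--density constant and of the Poincar\'e constant on $\O_r(x_0)$, which is exactly where the $C^1$ (or Lipschitz with condition (A)) regularity of $\p\O$ is used --- is accurate. So your route is a self-contained proof of standard facts that the paper simply outsources to the literature; it buys independence from the reference at the cost of reproducing textbook material, and it is consistent with how these results are proved there.
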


The $L^{2,\mu}$ regularity of first derivatives for the Dirichlet problem
\begin{equation}\label{BD}
\nabla\cdot (A\nabla u)=f+\nabla\cdot \F\q \text{\rm in } \O,\q
 u=u^0\q\text{\rm on } \p\O,
\end{equation}
and for the Neumann problem
\begin{equation}\label{BN}
\nabla\cdot (A\nabla u)=\nabla\cdot \F\q\text{\rm in } \O,\q
\nu\cdot(A\nabla u)=\nu\cdot\F\q\text{\rm on } \p\O,
\end{equation}
can be derived by Campanato's method, see \cite[Theorem 2.19]{Tro1987}.

\begin{Lem}\label{M-Campanato3}
Let $n\geq 3$ and $\O$ be a bounded $C^1$ domain in $\Bbb R^n$. Suppose the matrix-valued function $A$ satisfies
$$\lambda|\xi|^2\leq \langle A\xi,\xi\rangle\leq \Lambda|\xi|^2,\q \forall \xi\in\Bbb R^n,
$$
where $0<\lam\leq\Lambda<\infty$. There exist constants $C>0$ and $\delta\in (0,1)$, both depending only on $\O,\lambda,\Lambda$, such that for any $0<\mu<n-2+2\delta$, if
$$f\in L^{2,(\mu-2)^+}(\O),\q \F\in L^{2,\mu}(\O),\q u^0\in H^1(\O),\q \nabla u^0\in L^{2,\mu}(\O),
$$
and if $u\in H^1(\O)$ is a weak solution of \eqref{BD}, then $\nabla u\in L^{2,\mu}(\O)$, and we have the estimate
$$\|\nabla u\|_{L^{2,\mu}(\O)}\leq C\{\|u\|_{H^1(\O)}+\|f\|_{L^{2,(\mu-2)^+}(\O)}+\|\F\|_{L^{2,\mu}(\O)}+\|\nabla u^0\|_{L^{2,\mu}(\O)}\}.
$$
\end{Lem}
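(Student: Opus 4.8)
The plan is to use Campanato's perturbation method; the result is \cite[Theorem 2.19]{Tro1987}, so I only outline the argument. First reduce to the case $u^0\equiv 0$: setting $v=u-u^0\in H^1_0(\O)$, one has $\nabla\cdot(A\nabla v)=f+\nabla\cdot(\F-A\nabla u^0)$ weakly in $\O$, and since $A\in L^\infty$ and $\nabla u^0\in L^{2,\mu}(\O)$, part (i) of Lemma \ref{M-Campanato} gives $A\nabla u^0\in L^{2,\mu}(\O)$ with controlled norm. Thus it suffices to estimate $\nabla v$ with zero Dirichlet datum and with $\F$ replaced by $\F-A\nabla u^0$.

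Next establish the decay estimate for the frozen homogeneous problem. Fix $x_0\in\overline\O$ and $0<\rho\le R\le R_0$ ($R_0$ fixed), and let $w$ be the weak solution of $\nabla\cdot(A\nabla w)=0$ in $\O_R(x_0)$ with $w=v$ on $\O\cap\p B_R(x_0)$ and $w=0$ on $\p\O\cap B_R(x_0)$. The De Giorgi--Nash theorem for divergence-form operators with bounded measurable coefficients -- used in the interior when $B_R(x_0)\subset\O$, and up to the boundary after flattening the $C^1$ boundary and exploiting the homogeneous Dirichlet condition when $x_0\in\p\O$ -- produces an exponent $\delta=\delta(\O,\lambda,\Lambda)\in(0,1)$; combining it with the Caccioppoli and Poincar\'e inequalities yields
$$
\int_{\O_\rho(x_0)}|\nabla w|^2\,dx\le C\Big(\frac{\rho}{R}\Big)^{n-2+2\delta}\int_{\O_R(x_0)}|\nabla w|^2\,dx .
$$

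For the comparison step, $v-w\in H^1_0(\O_R(x_0))$ solves $\nabla\cdot(A\nabla(v-w))=f+\nabla\cdot(\F-\F_{x_0,R})$, where $\F_{x_0,R}$ is the average of $\F$ on $\O_R(x_0)$ (subtracting a constant does not change the divergence). Testing with $v-w$, using the Poincar\'e inequality $\|v-w\|_{L^2(\O_R(x_0))}\le CR\|\nabla(v-w)\|_{L^2(\O_R(x_0))}$, the Campanato bounds $\int_{\O_R(x_0)}|f|^2\,dx\le CR^{(\mu-2)^+}\|f\|_{L^{2,(\mu-2)^+}(\O)}^2$ and $\int_{\O_R(x_0)}|\F-\F_{x_0,R}|^2\,dx\le CR^{\mu}\|\F\|_{L^{2,\mu}(\O)}^2$, Young's inequality, and $R^2\le R^\mu$ when $\mu\le 2$, one obtains $\int_{\O_R(x_0)}|\nabla(v-w)|^2\,dx\le CR^\mu B$ with $B:=\|f\|_{L^{2,(\mu-2)^+}(\O)}^2+\|\F\|_{L^{2,\mu}(\O)}^2+\|\nabla u^0\|_{L^{2,\mu}(\O)}^2$. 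Writing $v=w+(v-w)$ and combining with the decay estimate gives
$$
\int_{\O_\rho(x_0)}|\nabla v|^2\,dx\le C\Big(\frac{\rho}{R}\Big)^{n-2+2\delta}\int_{\O_R(x_0)}|\nabla v|^2\,dx+CR^\mu B .
$$
Since $\mu<n-2+2\delta$, a standard iteration lemma (see \cite[Chapter 2]{Tro1987}) applied to $\phi(r)=\int_{\O_r(x_0)}|\nabla v|^2\,dx$ gives $\int_{\O_\rho(x_0)}|\nabla v|^2\,dx\le C\rho^{\mu}\big(R_0^{-\mu}\|\nabla v\|_{L^2(\O)}^2+B\big)$ for $\rho\le R_0$, the range $\rho>R_0$ being trivial. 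Taking the supremum over $x_0$ and $\rho$, adding $\|\nabla v\|_{L^2(\O)}$, and undoing the reduction $v=u-u^0$ (using $L^{2,\mu}(\O)\hookrightarrow L^2(\O)$) yields $\nabla u\in L^{2,\mu}(\O)$ with the asserted estimate.

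The step I expect to be the main obstacle is the decay estimate up to the boundary: one must check that flattening the $C^1$ portion of $\p\O$ preserves uniform ellipticity and $L^\infty$-boundedness of the transformed coefficients with constants depending only on $\O,\lambda,\Lambda$, and then invoke the boundary De Giorgi--Nash H\"older estimate for the zero Dirichlet problem to produce the exponent $\delta$. Once this is available, the interior decay estimate, the comparison computation and the iteration are all routine.
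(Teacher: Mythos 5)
Your outline is correct and takes essentially the same route as the paper, which does not prove Lemma \ref{M-Campanato3} itself but quotes it from \cite[Theorem 2.19]{Tro1987}, whose proof is precisely the Campanato perturbation scheme you describe (reduction to zero Dirichlet data, De Giorgi--Nash decay for the homogeneous problem up to the flattened $C^1$ boundary, the comparison estimate, and the standard iteration lemma).
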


\begin{Lem}\label{M-Campanato3.5}
Assume $\O$ and $A$ satisfy the conditions in Lemma \ref{M-Campanato3}. There exist constants $C>0$ and $\delta\in(0,1)$, both depending only on $\O,\lambda,\Lambda$, such that for $0<\mu<n-2+2\delta$,  if $\F\in L^{2,\mu}(\O)$, and if $u\in H^1(\O)$ is a weak solution of \eqref{BN},
then $\nabla u\in L^{2,\mu}(\O)$, and we have the estimate
$$\|\nabla u\|_{L^{2,\mu}(\O)}\leq C\{\|u\|_{H^1(\O)}+\|\F\|_{L^{2,\mu}(\O)}\}.
$$
\end{Lem}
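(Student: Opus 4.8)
\emph{Proof proposal.} The plan is to run Campanato's perturbation scheme exactly along the lines of Lemma \ref{M-Campanato3}, the only genuinely new point being the treatment of the conormal boundary condition in \eqref{BN}, so I would indicate the modifications rather than repeat the interior argument. Recall first that $u\in H^1(\O)$ being a weak solution of \eqref{BN} means $\int_\O\langle A\nabla u,\nabla\phi\rangle\,dx=\int_\O\langle\F,\nabla\phi\rangle\,dx$ for all $\phi\in H^1(\O)$ (the test function is not required to vanish on $\partial\O$, which is what encodes the Neumann condition; note also that $u$ is only determined up to an additive constant, consistent with the appearance of $\|u\|_{H^1(\O)}$ on the right). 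Let $\delta\in(0,1)$ be the De Giorgi--Nash Hölder exponent attached to $n,\lambda,\Lambda$. It suffices to establish, for a radius $R_0$ comparable to $\mathrm{diam}\,\O$, the Morrey-type decay estimate
\[
\int_{\O_\rho(x_0)}|\nabla u|^2\,dx\;\le\;C\Big(\frac{\rho}{R}\Big)^{n-2+2\delta}\int_{\O_R(x_0)}|\nabla u|^2\,dx\;+\;C\,R^{\mu}\,\|\F\|_{L^{2,\mu}(\O)}^2
\]
for all $x_0\in\overline\O$ and $0<\rho\le R\le R_0$. Once this is available, the standard iteration lemma (\cite{Tro1987}), applicable precisely because $\mu<n-2+2\delta$, yields $\sup_{x_0,r}r^{-\mu}\int_{\O_r(x_0)}|\nabla u|^2\le C(\|\nabla u\|_{L^2(\O)}^2+\|\F\|_{L^{2,\mu}(\O)}^2)$; then Lemma \ref{M-Campanato}(i), legitimate since $\mu<n-2+2\delta<n$, upgrades this to the asserted bound on $\|\nabla u\|_{L^{2,\mu}(\O)}$ after adding $\|\nabla u\|_{L^2(\O)}\le\|u\|_{H^1(\O)}$.

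\emph{Interior estimate.} For $B_R(x_0)\Subset\O$ I would split $u=w+z$ on $B_R(x_0)$, where $w$ solves the homogeneous Dirichlet problem $\nabla\cdot(A\nabla w)=0$ with $w-u\in H_0^1(B_R(x_0))$, and $z:=u-w\in H_0^1(B_R(x_0))$ then solves $\nabla\cdot(A\nabla z)=\nabla\cdot(\F-\F_{x_0,R})$, where $\F_{x_0,R}$ denotes the average of $\F$ over $B_R(x_0)$. Testing the equation for $z$ with $z$ and using ellipticity gives $\int_{B_R(x_0)}|\nabla z|^2\le C\int_{B_R(x_0)}|\F-\F_{x_0,R}|^2\le C\,R^{\mu}\|\F\|_{L^{2,\mu}(\O)}^2$. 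For $w$, Caccioppoli's inequality on concentric balls, the De Giorgi--Nash oscillation estimate for $w$, and Poincaré's inequality combine in the usual way to give $\int_{B_\rho(x_0)}|\nabla w|^2\le C(\rho/R)^{n-2+2\delta}\int_{B_R(x_0)}|\nabla w|^2$; since $\int_{B_R(x_0)}|\nabla w|^2\le 2\int_{B_R(x_0)}|\nabla u|^2+2\int_{B_R(x_0)}|\nabla z|^2$, adding the two contributions produces the displayed decay estimate (here $\O_r(x_0)=B_r(x_0)$).

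\emph{Boundary estimate --- the main obstacle.} Fix $x_0\in\partial\O$ and straighten $\partial\O$ by a $C^1$ diffeomorphism $\Psi$ carrying $\O\cap B$ onto a half-ball $B^+$ and $\partial\O\cap B$ onto its flat part $T$. Because $\Psi$ and $\Psi^{-1}$ are $C^1$, the pushed-forward function $\tilde u=u\circ\Psi^{-1}$ is a weak solution in $H^1(B^+)$, with test functions free on $T$, of an equation $\nabla\cdot(\tilde A\nabla\tilde u)=\nabla\cdot\tilde\F$ whose coefficient matrix $\tilde A$ is again bounded, measurable and uniformly elliptic with constants controlled by $\lambda,\Lambda$ and the $C^1$ norm of $\partial\O$, and with $\|\tilde\F\|_{L^{2,\mu}(B^+)}\le C\|\F\|_{L^{2,\mu}(\O)}$; the boundary integral in the weak form of \eqref{BN} disappears, i.e.\ $\tilde u$ satisfies a homogeneous conormal condition on $T$. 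One then repeats the comparison on half-balls $B_R^+$: let $\tilde w$ solve the homogeneous equation with $\tilde w=\tilde u$ on the spherical part of $\partial B_R^+$ and homogeneous conormal condition on $T$, and set $\tilde z=\tilde u-\tilde w$; the energy estimate for $\tilde z$ is as before. The new ingredient is the decay of $\tilde w$: since its conormal flux vanishes on $T$, one may extend $\tilde w$ and $\tilde A$ by even reflection across $T$ to obtain a solution of an equation of the same type on a full ball --- or invoke directly the De Giorgi--Nash estimate up to the boundary for the conormal problem --- thereby obtaining $\int_{B_\rho^+}|\tilde w-(\tilde w)_{B^+_\rho}|^2\le C(\rho/R)^{n+2\delta}\int_{B_R^+}|\tilde w-(\tilde w)_{B^+_R}|^2$; it is essential here that for the Neumann problem one may subtract a constant freely, in contrast to the Dirichlet situation of Lemma \ref{M-Campanato3}. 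Caccioppoli and Poincaré on half-balls (again valid subtracting a constant) then give $\int_{B_\rho^+}|\nabla\tilde w|^2\le C(\rho/R)^{n-2+2\delta}\int_{B_R^+}|\nabla\tilde w|^2$, and undoing $\Psi$ yields the displayed decay estimate for $x_0\in\partial\O$. The delicate point throughout is that, $A$ being merely bounded measurable and $\partial\O$ merely $C^1$, no freezing of coefficients or Schauder estimate is available: the gain of decay must come solely from the up-to-the-boundary De Giorgi--Nash oscillation estimate, and one must track carefully that its exponent $\delta$ and the constants depend only on $n,\lambda,\Lambda$ and the $C^1$ character of $\O$.

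\emph{Conclusion.} A finite cover of $\overline\O$ by interior balls and boundary charts reduces the global statement to the two cases above. Applying the iteration lemma at each center with initial radius $\rho_0\sim\mathrm{diam}\,\O$, where $\int_{\O_{\rho_0}(x_0)}|\nabla u|^2\le\|\nabla u\|_{L^2(\O)}^2\le\|u\|_{H^1(\O)}^2$, and summing over the cover gives $\sup_{x_0\in\overline\O,\,r>0}r^{-\mu}\int_{\O_r(x_0)}|\nabla u|^2\le C(\|u\|_{H^1(\O)}^2+\|\F\|_{L^{2,\mu}(\O)}^2)$. By Lemma \ref{M-Campanato}(i) the left-hand side controls an equivalent $L^{2,\mu}(\O)$ seminorm of $\nabla u$, and adding $\|\nabla u\|_{L^2(\O)}\le\|u\|_{H^1(\O)}$ yields $\|\nabla u\|_{L^{2,\mu}(\O)}\le C\{\|u\|_{H^1(\O)}+\|\F\|_{L^{2,\mu}(\O)}\}$ with $C$ and $\delta$ depending only on $\O,\lambda,\Lambda$, as required.
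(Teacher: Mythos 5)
Your proposal is correct and takes essentially the route the paper itself relies on: the paper does not prove this lemma but cites Campanato's method (Troianiello, Theorem 2.19), and your sketch — interior and flattened-boundary comparison with the homogeneous (conormal) problem, even reflection to invoke De Giorgi–Nash decay, the iteration lemma for $\mu<n-2+2\delta$, and Lemma \ref{M-Campanato}(i) — is the standard execution of exactly that method. One minor caveat: in the half-ball step you cannot subtract the normal component of the mean of $\F$ (the test functions are free on the flat part, so $\int\langle c,\nabla\tilde z\rangle\neq0$ in general), but this is harmless because $\mu<n$ and Lemma \ref{M-Campanato}(i) give $\int_{\O_R(x_0)}|\F|^2\le CR^{\mu}\|\F\|_{L^{2,\mu}(\O)}^2$ with no mean subtraction at all, so the energy estimate for $\tilde z$ goes through unchanged.
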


\vskip0.1in

\section{Existence of Weak Solutions}\label{sec3}

\begin{Def}
We say that $(u,\H)\in H^1(\O)\times [H_0(\divg0,\O)\cap H(\curl,\O)]$ is a weak solution of \eqref{PZ0} if $u=u^0$ on $\p\O$ in the sense of trace, and if $$\aligned
&\int_\O\nabla u\cdot\nabla v\, dx=\int_\O\sigma(u)^{-1}|\nabla\times \H|^2v\, dx,\q&\forall v \in H^1_0(\O)\cap L^\infty(\O),\\
&\int_\O[\sigma(u)^{-1}\nabla\times \H]\cdot \nabla\times\w\, dx=\int_\O \mathbf{E}^0\cdot\nabla\times\w\, dx,\q&\forall \w\in H_0(\divg0,\O)\cap H(\curl,\O).
\endaligned
$$
\end{Def}

Proof of the existence result in Theorem \ref{PZ1} needs the following lemma, which will also be needed in the proof of Theorem \ref{Thm-Lp-reg} in the next section.

\begin{Lem}\label{lemma3.3}
Let $\O$ be a bounded Lipschitz domain in $\Bbb R^3$. Assume that the function $\sigma$ satisfies  \eqref{cond-rho}, $u^0\in H^1(\O)$, and $\E^0\in L^q(\O, \Bbb R^3)$ for some $q>3$  with $\nabla\times\E^0=\0$ in $\O$. For any given $w\in L^2(\O)$, the following system
\begin{equation*}
\left\{\aligned
&\nabla\times[\sigma(w)^{-1}\nabla\times \H]=\0,\q \nabla\cdot \H=0\q & \text{\rm in } \O,\\
-&\Delta u=\sigma(w)^{-1}|\nabla\times \H|^2\q &\text{\rm in } \O,\\
&u=u^0,\q \nu\cdot\H=0,\q \nu\times[\sigma(w)^{-1}\nabla\times \H]=\nu\times\E^0\q & \text{\rm on } \p\O,
\endaligned\right.
\end{equation*}
has a unique weak solution $(u_w,\H_w)\in H^1(\O)\times [H_0(\divg0,\O)\cap H(\curl,\O)\cap\Bbb H_N(\O)^\perp]$
with the estimates
\begin{equation}\label{M-C1}
\|\nabla\times\H_w\|_{L^2(\O)}\leq\sigma_2\|\E^0\|_{L^2(\O)},
\end{equation}
\begin{equation}\label{Const-K}
\aligned
\|u_w\|_{L^2(\O)}\leq C(\O,q,\sigma_1,\sigma_2)\left\{\|\E^0\|_{L^q(\O)}\|\E^0\|_{L^2(\O)}+\|u^0\|_{H^1(\O)}\right\}.
\endaligned
\end{equation}
\end{Lem}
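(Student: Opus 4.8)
Here is the plan I would follow. The system is triangular: with $w$ fixed, $\sigma(w)$ is merely a measurable coefficient satisfying $\sigma_1\le\sigma(w)\le\sigma_2$, so the first equation and its boundary conditions determine $\H_w$ alone, and the second equation then determines $u_w$. For $\H_w$ I would set $V=H_0(\divg0,\O)\cap H(\curl,\O)\cap\Bbb H_N(\O)^\perp$ and seek $\H_w\in V$ with $\int_\O\sigma(w)^{-1}(\nabla\times\H_w)\cdot(\nabla\times\w)\,dx=\int_\O\E^0\cdot(\nabla\times\w)\,dx$ for every $\w\in V$. The bilinear form is bounded; it is coercive on $V$ because the kernel of $\u\mapsto\nabla\times\u$ on $H_0(\divg0,\O)\cap H(\curl,\O)$ is exactly $\Bbb H_N(\O)$ and the Friedrichs-type inequality $\|\u\|_{L^2(\O)}\le C\|\nabla\times\u\|_{L^2(\O)}$ holds on $V$ (a consequence of the $p=2$ case of Lemma~\ref{Hsp} together with Rellich's compactness theorem), combined with $\sigma(w)^{-1}\ge\sigma_2^{-1}$; and the right-hand side is a bounded functional since $\E^0\in L^2(\O)$. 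Lax--Milgram then gives a unique $\H_w\in V$. Adding a harmonic Neumann field to a test field changes neither side (its curl vanishes), so $\H_w$ in fact solves the weak equation against all of $H_0(\divg0,\O)\cap H(\curl,\O)$; taking $\w=\H_w$ and using Cauchy--Schwarz gives $\sigma_2^{-1}\|\nabla\times\H_w\|_{L^2}^2\le\|\E^0\|_{L^2}\|\nabla\times\H_w\|_{L^2}$, that is \eqref{M-C1}.

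Next I would rewrite the heat source so that it has $H^{-1}$ regularity. From the weak $\H$-equation and $\nabla\times\E^0=\0$ one gets $\nabla\times[\sigma(w)^{-1}\nabla\times\H_w]=\0$ in $\O$, and the natural boundary condition $\nu\times[\sigma(w)^{-1}\nabla\times\H_w-\E^0]=\0$ on $\p\O$ is encoded in the weak equation, so Lemma~\ref{curl0}(ii) applies to the curl-free field $\sigma(w)^{-1}\nabla\times\H_w-\E^0$: there are $\varphi\in H_0^1(\O)$, $\h\in\Bbb H_D(\O)$ with $\sigma(w)^{-1}\nabla\times\H_w-\E^0=\nabla\varphi+\h$. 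By $L^2$-orthogonality in the Helmholtz--Weyl decomposition, $\|\h\|_{L^2}\le\|\sigma(w)^{-1}\nabla\times\H_w-\E^0\|_{L^2}\le\sigma_1^{-1}\|\nabla\times\H_w\|_{L^2}+\|\E^0\|_{L^2}$, and since $\Bbb H_D(\O)$ is finite dimensional and $\Bbb H_D(\O)\subset L^q(\O)$ on Lipschitz domains (replacing $q$ by a smaller exponent in $(3,q]$ if necessary, which is harmless as $\O$ is bounded), this and \eqref{M-C1} give $\|\h\|_{L^q}\le C\|\E^0\|_{L^2}$. Taking the divergence shows $\varphi$ is the weak solution of \eqref{eqphi-1}, i.e. $\nabla\cdot[\sigma(w)\nabla\varphi]=-\nabla\cdot[\sigma(w)(\h+\E^0)]$ in $\O$, $\varphi=0$ on $\p\O$, with datum $\F=-\sigma(w)(\h+\E^0)\in L^q(\O)$ and $q>3=n$; hence by the $L^\infty$ estimate for such problems (De~Giorgi--Nash--Stampacchia, or Lemma~\ref{M-Campanato3} combined with Lemma~\ref{M-Campanato}) one obtains $\varphi\in L^\infty(\O)$ with $\|\varphi\|_{L^\infty}\le C\|\F\|_{L^q}\le C\|\E^0\|_{L^q}$.

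Using $\sigma(w)^{-1}\nabla\times\H_w=\nabla\varphi+\h+\E^0$ and $\nabla\cdot(\nabla\times\H_w)=0$ one then has
$$\sigma(w)^{-1}|\nabla\times\H_w|^2=\nabla\cdot[\varphi\,\nabla\times\H_w]+(\h+\E^0)\cdot(\nabla\times\H_w),$$
with $\varphi\,\nabla\times\H_w\in L^2(\O)$ (because $\varphi\in L^\infty$) and $(\h+\E^0)\cdot(\nabla\times\H_w)\in L^{6/5}(\O)\hookrightarrow H^{-1}(\O)$ (Hölder, since $\h,\E^0\in L^3(\O)$ and $\nabla\times\H_w\in L^2(\O)$). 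Thus the source lies in $H^{-1}(\O)$, and $-\Delta u=\sigma(w)^{-1}|\nabla\times\H_w|^2$ in $\O$, $u=u^0$ on $\p\O$ has a unique weak solution $u_w=u^0+\tilde u$, $\tilde u\in H_0^1(\O)$, via Lax--Milgram for the Laplacian (the right-hand side, including the contribution $\Delta u^0\in H^{-1}(\O)$ of the boundary data, lies in $H^{-1}(\O)$); density of $H_0^1(\O)\cap L^\infty(\O)$ in $H_0^1(\O)$ shows $u_w$ is a weak solution in the sense of the Definition and the only one in that class. Finally, testing the equation for $\tilde u$ against $\tilde u$, estimating the three resulting terms by Cauchy--Schwarz, Hölder, the Sobolev inequality $\|\tilde u\|_{L^6}\le C\|\nabla\tilde u\|_{L^2}$ and Poincaré's inequality, and inserting \eqref{M-C1}, $\|\varphi\|_{L^\infty}\le C\|\E^0\|_{L^q}$, $\|\h\|_{L^3}\le C\|\E^0\|_{L^2}$ and $\|\E^0\|_{L^3}\le C\|\E^0\|_{L^q}$, yields $\|u_w\|_{L^2}\le C(\|\E^0\|_{L^q}\|\E^0\|_{L^2}+\|u^0\|_{H^1})$, which is \eqref{Const-K}.

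The main obstacle is not the linear solvability but keeping $u_w$ in $H^1(\O)$: a priori the heat source $\sigma(w)^{-1}|\nabla\times\H_w|^2$ is only in $L^1(\O)$, which by itself would give $u_w\in W^{1,p}$ only for $p<3/2$. The rewriting above recovers $H^1$-regularity, and its essential input is the bound $\varphi\in L^\infty(\O)$; this is precisely where the hypothesis $q>3$ enters, feeding a supercritical datum into the De~Giorgi--Nash estimate, and establishing and quantifying that $L^\infty$ bound (together with the higher integrability of $\Bbb H_D(\O)$ needed to estimate $\h$) is the technical heart of the argument.
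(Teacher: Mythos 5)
Your proposal follows essentially the same route as the paper's proof: Lax--Milgram for $\H_w$ on $H_0(\divg0,\O)\cap H(\curl,\O)\cap\Bbb H_N(\O)^\perp$ with a Friedrichs/Poincar\'e inequality, the test field $\w=\H_w$ for \eqref{M-C1}, the decomposition of Lemma \ref{curl0}(ii), an $L^\infty$ bound for $\varphi$ from the elliptic problem \eqref{equation-fai}, the divergence-form rewriting of the Joule term to place it in $H^{-1}(\O)$, and testing with $u_w-u^0$ to reach \eqref{Const-K}. The architecture and the estimates you outline are exactly those of the paper.

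The one step you assert rather than prove is the crucial one on a merely Lipschitz domain: that $\Bbb H_D(\O)\subset L^{q'}(\O)$ for some $q'>3$ with $\|\h\|_{L^{q'}(\O)}\le C\|\h\|_{L^2(\O)}$. This cannot be waved through: the easy route $\h\in H^{1/2}(\O,\Bbb R^3)\hookrightarrow L^3(\O)$ (the $p=2$ case of Lemma \ref{Hsp}) gives exactly $L^3$ and just misses what Stampacchia's bound (\cite[Theorem 8.16]{GT2001}) needs for \eqref{M-C4}, and harmonic Dirichlet fields need not be bounded on Lipschitz domains, so an exponent strictly above $3$ has to be extracted by a genuine argument. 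The paper closes precisely this point by applying Lemma \ref{Hsp} a second time with $p\in(2,\min\{2+\epsilon,3\})$ and $\mu=(p-2)/(2p)$, which gives $\h_w\in H^{(4-p)/(2p),\,p}(\O)\hookrightarrow L^{6p/(p+2)}(\O)$ with $6p/(p+2)>3$ and the quantitative estimate \eqref{est-h}; you should either reproduce this or cite such a result explicitly (finite-dimensionality of $\Bbb H_D(\O)$ then indeed upgrades the bound to one in terms of $\|\h\|_{L^2(\O)}$, as you say). Two smaller remarks: your parenthetical alternative for the $L^\infty$ bound via Lemmas \ref{M-Campanato3} and \ref{M-Campanato} is not available here, since those lemmas are stated for $C^1$ domains while $\O$ is only Lipschitz, so the De Giorgi--Nash--Stampacchia estimate is the one to use; the rest of your argument (identity for the source via $\nabla\cdot(\nabla\times\H_w)=0$, $L^{6/5}\hookrightarrow H^{-1}$, and the final testing) matches the paper's Step 3.
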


\begin{proof}
{\it Step 1}. For any given $w\in L^2(\O)$, let $\H_w\in H_0(\divg0,\O)\cap H(\curl,\O)\cap\Bbb H_N(\O)^\perp$ be the unique weak solution of the following system
\begin{equation}\label{equation-H}
\left\{\aligned
&\nabla\times[\sigma(w)^{-1}\nabla\times \H_w]=\0,\q \nabla\cdot \H_w=0\q &\text{in } \O,\\
&\nu\cdot\H_w=0,\q \nu\times[\sigma(w)^{-1}\nabla\times \H_w]=\nu\times\E^0\q &\text{on } \p\O.
\endaligned\right.
\end{equation}
Existence of a unique weak solution $\H_w$ of \eqref{equation-H} can be proved by using the Lax-Milgram theorem, with the help of Poincar\'{e} type inequality
$$\|\bold v\|_{L^2(\O)}\leq C(\O)\|\nabla\times\bold v\|_{L^2(\O)} \q\forall \bold v\in H_0(\divg0,\O)\cap H(\curl,\O)\cap\Bbb H_N(\O)^\perp,$$
which is a consequence of a compact embedding theorem in Lipschitz domains established in \cite{Pi1984}, see also \cite{BPS2016} for weak Lipschitz domains and mixed boundary conditions.
Taking $\H_w$ as a test function  to \eqref{equation-H} and using condition \eqref{cond-rho}, we obtain the estimate \eqref{M-C1}.

\vskip0.05in

{\it Step 2}. For the weak solution $\H_w$ of \eqref{equation-H} obtained above, we have
$$\nabla\times[\sigma(w)^{-1}\nabla\times\H_w-\E^0]=\0\;\;\text{in }\O,\q
\nu\times[\sigma(w)^{-1}\nabla\times\H_w-\E^0]=\0\;\;\text{on }\p\O.
$$
It follows from Lemma \ref{curl0} that there exist $\varphi_w\in H_0^1(\O)$ and $\h_w\in \Bbb H_D(\O)$ such that
\eq\label{decomh}
\sigma(w)^{-1}\nabla\times\H_w-\E^0=\nabla\varphi_w+\h_w,
\eeq
and $\varphi_w$ satisfies the equation
\eq\label{eq-phi}
\left\{\aligned
&\Delta\varphi_w=\nabla\cdot [\sigma(w)^{-1}\nabla\times\H_w-\E^0]\q &\text{in } \O,\\
&\varphi_w=0\q &\text{on } \p\O.
\endaligned\right.
\eeq
We show that, there exists $C_1=C_1(\O,q,\sigma_1,\sigma_2)>0$ such that
\begin{equation}\label{M-C4}
\|\varphi_w\|_{L^\infty(\O)} \leq C_1\|\E^0\|_{L^q(\O)}.
\end{equation}

To prove \eqref{M-C4}, we note that the vector field $\h_w$ in \eqref{decomh} can be written as follows:
\eq\label{formula-cj}
\h_w=\sum_{j=1}^mc_j\e_j,\q c_j=\int_\O\e_j\cdot(\sigma(w)^{-1}\nabla\times\H_w-\E^0) dx,\q j=1,
\cdots, m,
\eeq
where $\{\e_1,\cdots,\e_m\}$ is an orthonormal basis of $\Bbb H_D(\O)$ with respect to the $L^2$-norm. \eqref{formula-cj} can be verified by using the $L^2$ orthogonality of $\nabla\varphi_w$ and $\h_w$. By Lemma \ref{Hsp}, we get $\h_w\in H^{\frac{1}{2}}(\O,\mathbb{R}^3)$. Hence the Sobolev embedding implies that $\h_w\in L^3(\O,\mathbb{R}^3)$.  Let $\epsilon$ be the constant in Lemma \ref{Hsp}. We choose $p\in (2,\min\{2+\epsilon,3\})$. By Lemma \ref{Hsp} again, we obtain
$$\h_w\in H^{\frac{1}{p}-\mu,p}(\O),\q\text{ where $\mu=\frac{p-2}{2p}.$}$$
By the Sobolev embedding, we get $\h_w\in L^{\frac{6p}{p+2}}(\O,\mathbb{R}^3)$ with the estimate
\begin{equation}\label{est-h}
\|\h_w\|_{L^{\frac{6p}{p+2}}(\O)}\leq C(p,\O)\|\h_w\|_{L^2(\O)}.
\end{equation}
On the other hand, for any $\zeta\in H_0^1(\O)$, we have
\begin{equation}\label{fai-heq}
\aligned
\int_\O\sigma(w)(\nabla\varphi_w+\h_w+\E^0)\cdot\nabla\zeta dx&=\int_\O (\nabla\times\H_w)\cdot\nabla\zeta dx
=0.
\endaligned
\end{equation}
Hence $\varphi_w$ is also a solution of the following equation
\begin{equation}\label{equation-fai}
\left\{\aligned
&\nabla\cdot[\sigma(w)(\nabla\varphi_w+\h_w+\E^0)]=0\q &\text{in } \O,\\
&\varphi_w=0\q &\text{on } \p\O.
\endaligned\right.
\end{equation}
Applying \cite[Theorem 8.16]{GT2001} to \eqref{equation-fai} we obtain
\begin{equation}\label{est-fai}
\|\varphi_w\|_{L^\infty(\O)}\leq C(\O,q,\sigma_1,\sigma_2)\|\h_w+\E^0\|_{L^{q_1}(\O)},\q\text{ where $q_1=\min\{q,\frac{6p}{p+2}\}>3$.}
\end{equation}
Then \eqref{M-C4} follows from \eqref{M-C1}, \eqref{formula-cj}, \eqref{est-h}, \eqref{est-fai}.

Furthermore, applying \cite[Theorem 8.29]{GT2001} instead of \cite[Theorem 8.16]{GT2001}, we see that there exist constants $\alpha=\alpha(\O,q,\sigma_1,\sigma_2)\in (0,1)$ and $C=C(\O,q,\sigma_1,\sigma_2)$ such that
\begin{equation}\label{fai-c0a}
\|\varphi_w\|_{C^{0,\alpha}(\overline{\O})} \leq C\|\E^0\|_{L^q(\O)}.
\end{equation}

\vskip0.05in

{\it Step 3}. For the $w$ and $\H_w$ given above, we show that the following equation has a unique $H^1$ weak solution $u_w$:
\begin{equation}\label{equation-u}
-\Delta u_w=\sigma(w)^{-1}|\nabla\times\H_w|^2\q \text{in } \O,\q
u_w=u^0 \q\text{on } \p\O.
\end{equation}
To prove this, note that from \eqref{decomh} we have
$$\sigma(w)(\nabla\varphi_w+\h_w+\E^0)\cdot \nabla\varphi_w=\nabla\cdot[\varphi_w\sigma(w)(\nabla\varphi_w+\h_w+\E^0)].
$$
This equality can also be verified by taking $\zeta=\varphi_w v$ in \eqref{fai-heq}, with an arbitrary $v\in\mathcal{D}(\O)$.
Hence
$$
\aligned
&\sigma(w)^{-1}|\nabla\times\H_w|^2=\sigma(w)|\nabla\varphi_w+\h_w+\E^0|^2\\
=&\nabla\cdot[\varphi_w\sigma(w)(\nabla\varphi_w+\h_w+\E^0)]+\sigma(w)(\nabla\varphi_w+\h_w+\E^0)\cdot(\h_w+\E^0)\\
=&\nabla\cdot(\varphi_w\,\nabla\times\H_w)+(\h_w+\E^0)\cdot\nabla\times\H_w,
\endaligned
$$
from which we see that $\sigma(w)^{-1}|\nabla\times\H_w|^2\in H^{-1}(\O)$. Therefore, by Lax-Milgram theorem, the Dirichlet problem
\eqref{equation-u} has a unique weak solution $u_w\in H^1(\O)$.

To prove \eqref{Const-K}, write \eqref{equation-u} in the following form
\begin{equation}\label{equ-7}
\left\{\aligned
-&\Delta (u_w-u^0)=\nabla\cdot(\varphi_w\,\nabla\times\H_w+\nabla u^0)+(\h_w+\E^0)\cdot\nabla\times\H_w\q& \text{in } \O,\\
& u_w-u^0=0 \q&\text{on } \p\O.
\endaligned\right.
\end{equation}
Taking $u_w-u^0$ as a test function, we get
$$
\aligned
\|\nabla (u_w-u^0)\|_{L^2(\O)}&\leq \|\varphi_w\,\nabla\times\H_w+\nabla u^0\|_{L^2(\O)}+C(\O)\|(\h_w+\E^0)\cdot\nabla\times\H_w\|_{L^\frac{6}{5}(\O)}\\
&\leq C(\O)\left(\|\varphi_w\|_{L^\infty(\O)}+\|\h_w+\E^0\|_{L^3(\O)}\right)\|\nabla\times\H_w\|_{L^2(\O)}+\|\nabla u^0\|_{L^2(\O)}.
\endaligned
$$
From this, \eqref{M-C1}, \eqref{M-C4}, and by Poincar\'{e} inequality, we find that
$$
\aligned
\|u_w\|_{L^2(\O)}&\leq \|u_w-u^0\|_{L^2(\O)}+\|u^0\|_{L^2(\O)}\\
&\leq C\left(\|\E^0\|_{L^q(\O)}\|\E^0\|_{L^2(\O)}+\|u^0\|_{H^1(\O)}\right),
\endaligned
$$
where the constant $C$ depends only on $\O, q, \sigma_1,\sigma_2$. This gives the estimate \eqref{Const-K}.

\end{proof}

\begin{Thm}\label{PZ1}
Let $\O$ be a bounded Lipschitz domain in $\Bbb R^3$. Assume that the function $\sigma$ satisfies  \eqref{cond-rho}, $u^0\in H^1(\O)$, and $\E^0\in L^q(\O, \Bbb R^3)$ for some $q>3$  with $\nabla\times\E^0=\0$ in $\O$.
Then \eqref{PZ0} has a weak solution $(u,\H)\in H^1(\O)\times [H_0(\divg0,\O)\cap H(\curl,\O)]$.
\end{Thm}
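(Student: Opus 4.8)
The plan is to prove Theorem \ref{PZ1} by Schauder's fixed point theorem, using Lemma \ref{lemma3.3} to define the relevant solution operator. Specifically, for each $w\in L^2(\O)$, Lemma \ref{lemma3.3} produces a unique pair $(u_w,\H_w)$ solving the decoupled system with $\sigma(w)$ in place of $\sigma(u)$; define the map $T\colon L^2(\O)\to L^2(\O)$ by $T(w)=u_w$. A fixed point $u=T(u)$, together with the associated $\H_u$, is then a weak solution of \eqref{PZ0} in the asserted space, since the only place the coupling enters is through the argument of $\sigma$.

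First I would verify that $T$ maps a suitable closed convex set into itself. By \eqref{Const-K}, $\|u_w\|_{L^2(\O)}\le C(\O,q,\sigma_1,\sigma_2)\{\|\E^0\|_{L^q(\O)}\|\E^0\|_{L^2(\O)}+\|u^0\|_{H^1(\O)}\}=:R$, a bound independent of $w$; hence $T$ maps the closed ball $\overline{B}_R(0)\subset L^2(\O)$ into itself. Actually it is cleaner to note that $T$ maps all of $L^2(\O)$ into the fixed ball $\overline{B}_R$, so one applies Schauder on $\overline{B}_R$.

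Next I would establish compactness and continuity of $T$. For compactness: given any bounded sequence $w_n$, the estimate \eqref{M-C1} bounds $\|\nabla\times\H_{w_n}\|_{L^2(\O)}$ uniformly, and then \eqref{equ-7} with $u_{w_n}-u^0$ as test function (exactly the computation at the end of the proof of Lemma \ref{lemma3.3}, using \eqref{M-C4} and $\h_{w_n}\in L^3$ with the uniform bound \eqref{est-h}) gives a uniform bound on $\|u_{w_n}\|_{H^1(\O)}$. The compact embedding $H^1(\O)\hookrightarrow\hookrightarrow L^2(\O)$ then yields a subsequence of $u_{w_n}$ converging in $L^2(\O)$, so $T$ is a compact operator. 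For continuity: if $w_n\to w$ in $L^2(\O)$, pass to a subsequence so that $w_n\to w$ a.e.; since $\sigma$ is continuous and bounded, $\sigma(w_n)^{-1}\to\sigma(w)^{-1}$ a.e.\ and boundedly, hence in every $L^p(\O)$, $p<\infty$. Using the $H^1$-bound on $\H_{w_n}$ (so a weakly convergent subsequence $\H_{w_n}\rightharpoonup\H^*$ in $H(\curl,\O)$) together with the compact embedding of $H_0(\divg0,\O)\cap H(\curl,\O)\cap\Bbb H_N(\O)^\perp$ into $L^2(\O)$, one passes to the limit in the weak formulation \eqref{equation-H} to identify $\H^*=\H_w$, and then passes to the limit in \eqref{equ-7} to identify the $L^2$-limit of $u_{w_n}$ as $u_w=T(w)$; a standard subsequence argument upgrades this to $T(w_n)\to T(w)$ along the full sequence.

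I expect the main obstacle to be the continuity argument, in particular passing to the limit in the quadratic term $\sigma(w_n)^{-1}|\nabla\times\H_{w_n}|^2$ on the right-hand side of the temperature equation. The point is that $\nabla\times\H_{w_n}$ converges only weakly in $L^2$, so one cannot take the limit in the quadratic form directly; this is precisely why the rewriting $\sigma(w)^{-1}|\nabla\times\H_w|^2=\nabla\cdot(\varphi_w\,\nabla\times\H_w)+(\h_w+\E^0)\cdot\nabla\times\H_w$ from the proof of Lemma \ref{lemma3.3} is essential. One must show $\varphi_{w_n}\to\varphi_w$ strongly enough (e.g.\ $\nabla\varphi_{w_n}\to\nabla\varphi_w$ in $L^2$, which follows from testing the difference of the equations \eqref{equation-fai} and using the $L^\infty$/$C^{0,\a}$ bounds \eqref{M-C4}, \eqref{fai-c0a}) and that $\h_{w_n}\to\h_w$ (via \eqref{formula-cj}, since the finitely many coefficients $c_j$ are continuous functionals of $w$ through $\H_{w}$ and $\sigma(w)$), so that both $\varphi_{w_n}\,\nabla\times\H_{w_n}$ and $(\h_{w_n}+\E^0)\cdot\nabla\times\H_{w_n}$ converge in the sense of distributions to the corresponding limits; then elliptic estimates for \eqref{equ-7} give $u_{w_n}\to u_w$ in $H^1(\O)$, hence in $L^2(\O)$. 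Once $T$ is shown to be a compact continuous self-map of $\overline{B}_R$, Schauder's theorem furnishes a fixed point and the proof is complete.
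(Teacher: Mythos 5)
Your proposal is correct and follows essentially the same route as the paper: Schauder's fixed point theorem applied to the operator $w\mapsto u_w$ of Lemma \ref{lemma3.3}, with the invariant ball coming from \eqref{Const-K}, compactness from the uniform $H^1$ bound, and the quadratic term handled through the decomposition into $\varphi_w$ and $\h_w$. The only deviation is the continuity step: the paper avoids weak compactness for $\H_{w_n}$ entirely by testing the difference equation \eqref{eq-HH}, which gives strong $L^2$ convergence of $\nabla\times\H_k$ directly via dominated convergence and hence the strong $H^1$ convergence of $u_k$ that you assert but, with only weak convergence of $\nabla\times\H_{w_n}$, you would in fact only obtain weakly in $H^1$ (still sufficient, via the compact embedding into $L^2$, for the continuity of $T$ that Schauder requires).
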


\begin{proof}

We define an operator $\mathrm{T}: L^2(\O)\to L^2(\O)$ as follows.
Given $w\in L^2(\O)$, we define $\H_w$, $\varphi_w$ and $u_w$ by the solution of \eqref{equation-H}, \eqref{eq-phi} and \eqref{equation-u} successively, and then define $\mathrm{T}(w)=u_w$. Recall that the estimates of $\|\varphi_w\|_{C^{0,\alpha}(\overline{\O})}$ and $\|u_w\|_{L^2(\O)}$ (see \eqref{fai-c0a} and \eqref{Const-K}) do not depend on the choice of $w\in L^2(\O)$, and these uniform (in $w$) estimates will be crucial in the proof of  existence of a fixed point of $T$.
Denote by $K$ the right hand side of the inequality \eqref{Const-K} and let
$$D=\{w\in L^2(\O):\|w\|_{L^2(\O)}\leq K\}.
$$
Obviously, $D$ is  convex and closed in $L^2(\O)$, and $\mathrm{T}$ maps $D$ into $D$.

We show that $\mathrm{T}$ is continuous from $L^2(\O)$ to $H^1(\O)$. Suppose $w_k\rightarrow w_0$ in $L^2(\O)$ as $k\rightarrow\infty$. Denote by $\H_k, \varphi_k, u_k$ and $\H_0, \varphi_0, u_0$ the solutions $\H_{w_k}, \varphi_{w_k}, u_{w_k}$ and $\H_{w_0}, \varphi_{w_0}, u_{w_0}$ obtained by setting $w=w_k$ and $w=w_0$ in the equation \eqref{equation-H}, \eqref{eq-phi} and \eqref{equation-u}, respectively. Then we obtain
\begin{equation}\label{eq-HH}
\left\{\aligned
&\nabla\times[\sigma(w_k)^{-1}\nabla\times (\H_k-\H_0)]=\nabla\times[(\sigma(w_0)^{-1}-\sigma(w_k)^{-1})\nabla\times\H_0]\q & \text{in } \O,\\
&\nabla\cdot (\H_k-\H_0)=0\q &\text{in } \O,\\
&\nu\cdot(\H_k-\H_0)=\0\q&\text{on } \p\O,\\
&\nu\times[\sigma(w_k)^{-1}\nabla\times (\H_k-\H_0)]=\nu\times[(\sigma(w_0)^{-1}-\sigma(w_k)^{-1})\nabla\times\H_0]\q&\text{on } \p\O.
\endaligned\right.
\end{equation}
By the Lebesgue's dominated convergence theorem we have
$$\|(\sigma(w_0)^{-1}-\sigma(w_k)^{-1})\nabla\times\H_0\|_{L^2(\O)}\rightarrow0\q\text{as }k\rightarrow\infty.
$$
Then by the $L^2$ estimate of \eqref{eq-HH} and using condition \eqref{cond-rho} we find that
\begin{equation}\label{converg1}
\aligned
\|\nabla\times (\H_k-\H_0)\|_{L^2(\O)}&\leq \sigma_2\|(\sigma(w_0)^{-1}-\sigma(w_k)^{-1})\nabla\times\H_0\|_{L^2(\O)}\rightarrow 0.
\endaligned
\end{equation}
Thus we have
\begin{equation}\label{converg2}
\aligned
&\|\sigma(w_k)^{-1}\nabla\times\H_k-\sigma(w_0)^{-1}\nabla\times\H_0\|_{L^2(\O)}\\
\leq& \|\sigma(w_k)^{-1}\nabla\times (\H_k-\H_0)\|_{L^2(\O)}+\|(\sigma(w_k)^{-1}-\sigma(w_0)^{-1})\nabla\times\H_0\|_{L^2(\O)}\\
\leq& \left({\sigma_2\over \sigma_1}+1\right)\|(\sigma(w_k)^{-1}-\sigma(w_0)^{-1})\nabla\times\H_0\|_{L^2(\O)}\rightarrow 0.
\endaligned
\end{equation}

As in \eqref{decomh} we have
$$\sigma(w_k)^{-1}\nabla\times\H_k-\E^0=\nabla\varphi_k+\h_k,\q \sigma(w_0)^{-1}\nabla\times\H_0-\E^0=\nabla\varphi_0+\h_0.
$$
Using the formula \eqref{formula-cj} for the representations of $\h_k$ and $\h_0$, and using \eqref{converg2} we obtain
\begin{equation}\label{converg3}
\aligned
\|\h_k-\h_0\|_{L^3(\O)}&
=\Big\|\sum_{j=1}^m\Big\{\int_\O\e_j\cdot[\sigma(w_k)^{-1}\nabla\times\H_k-\sigma(w_0)^{-1}\nabla\times\H_0]dx\Big\}\e_j\Big\|_{L^3(\O)}\\
&\leq\|\sigma(w_k)^{-1}\nabla\times\H_k-\sigma(w_0)^{-1}\nabla\times\H_0\|_{L^2(\O)}\sum_{j=1}^m\|\e_j\|_{L^2(\O)}\|\e_j\|_{L^3(\O)}\\
&\leq C(\O)\|\sigma(w_k)^{-1}\nabla\times\H_k-\sigma(w_0)^{-1}\nabla\times\H_0\|_{L^2(\O)}\rightarrow 0.
\endaligned
\end{equation}
It follows that
\begin{equation*}
\aligned
\|\nabla\varphi_k-\nabla\varphi_0\|_{L^2(\O)}&=\|\sigma(w_k)^{-1}\nabla\times\H_k-\h_k-\sigma(w_0)^{-1}\nabla\times\H_0+\h_0\|_{L^2(\O)}\\
&\leq \|\sigma(w_k)^{-1}\nabla\times\H_k-\sigma(w_0)^{-1}\nabla\times\H_0\|_{L^2(\O)}+\|\h_k-\h_0\|_{L^2(\O)}\rightarrow 0.
\endaligned
\end{equation*}
Therefore, by Poincar\'{e} inequality, we have
$\|\varphi_k-\varphi_0\|_{L^2(\O)}\rightarrow 0.$
By the estimate \eqref{fai-c0a} we see that $\|\varphi_k\|_{C^{0,\alpha}(\overline{\O})}$ is bounded uniformly in $k$. By Arzela-Ascoli theorem we know that, for any sequence $k_j\to\infty$ there exist a subsequence $\{\varphi_{k_{j_l}}\}\subset C^0(\overline{\O})$ and $\varphi^*\in C^0(\overline{\O})$ such that $\varphi_{k_{j_l}}\rightarrow\varphi^*$ in $C^0(\overline{\O})$ as $k_{j_l}\rightarrow\infty$. By the above convergence in $L^2(\O)$, we obtain $\varphi^*=\varphi_0$. Thanks to the uniqueness of $\varphi^*$, we have
\begin{equation}\label{converg4}
\|\varphi_k-\varphi_0\|_{C^0(\overline{\O})}\rightarrow 0 \text{ as $k\rightarrow\infty$.}
\end{equation}

By subtraction of the equations of $u_k$ and $u_0$, we get an equation for $u_k-u_0$ in $\O$:
$$
-\Delta (u_k-u_0)=\nabla\cdot(\varphi_k\,\nabla\times\H_k-\varphi_0\nabla\times\H_0)+(\h_k+\E^0)\cdot\nabla\times\H_k-(\h_0+\E^0)\cdot\nabla\times\H_0.
$$
We re-collect the two terms in the right side as follows:
$$
\varphi_k\,\nabla\times\H_k-\varphi_0\nabla\times\H_0=\varphi_k(\nabla\times\H_k-\nabla\times\H_0)+(\varphi_k-\varphi_0)\nabla\times\H_0,
$$
and
$$\aligned
&(\h_k+\E^0)\cdot\nabla\times\H_k-(\h_0+\E^0)\cdot\nabla\times\H_0=(\h_k+\E^0)\cdot(\nabla\times\H_k-\nabla\times\H_0)\\
&\q+(\h_k-\h_0)\cdot\nabla\times\H_0.
\endaligned
$$
Recall that $u_k-u_0=0$ on $\p\O$. Applying the $H^1$ estimate of Laplace equation to the above equation for $u_k-u_0$
we have
$$\aligned
&\|\nabla u_k-\nabla u_0\|_{L^2(\O)}\leq\|\varphi_k\,\nabla\times\H_k-\varphi_0\nabla\times\H_0\|_{L^2(\O)}\\
+&C\|(\h_k+\E^0)\cdot\nabla\times\H_k-(\h_0+\E^0)\cdot\nabla\times\H_0\|_{L^\frac{6}{5}(\O)}\\
\leq& \|\varphi_k\|_{L^\infty(\O)}\|\nabla\times(\H_k-\H_0)\|_{L^2(\O)}+\|\varphi_k-\varphi_0\|_{L^\infty(\O)}\|\nabla\times\H_0\|_{L^2(\O)}\\
+&C\{\|\E^0\|_{L^3(\O)}\|\nabla\times(\H_k-\H_0)\|_{L^2(\O)}+\|\h_k-\h_0\|_{L^3(\O)}\|\nabla\times\H_0\|_{L^2(\O)}\}\rightarrow 0,
\endaligned
$$
where $C$ depends on $\O$. Here we have used \eqref{converg1}, \eqref{converg3}, \eqref{converg4}. Hence by Poincar\'{e} inequality we find
$u_k\to u_0$ in $H^1(\O)$ as $k\to\infty.$ So $\mathrm{T}$ is
continuous from $L^2(\O)$ to $H^1(\O)$.

Finally, since the embedding $H^1(\O)\hookrightarrow L^2(\O)$ is compact, $\mathrm{T}$ is compact on $D$. Applying Schauder's fixed point theorem we conclude that $\mathrm{T}$ has a fixed point $u\in D$. Since $\mathrm{T}$ maps $D$ into $H^1(\O)$ we know that $u=\mathrm{T}(u)\in H^1(\O)$. Let $\H\in H_0(\divg0,\O)\cap H(\curl,\O)\cap\Bbb H_N(\O)^\perp$ be the solution of \eqref{equation-H} with $w$ replaced by $u$. Then $(u,\H)$ is a weak solution of \eqref{PZ0}.
\end{proof}

\v0.1in

\section{Regularity of Weak Solutions}\label{sec4}

\subsection{Higher integrability of derivatives}\

In Theorem \ref{PZ1} we get a weak solution $(u,\H)$ to \eqref{PZ0}. Under the assumption that $\O$ is of class $C^2$, we can show that actually $\H\in W^{1,p}(\O,\Bbb R^3)$ whenever $\E^0\in L^p(\O,\Bbb R^3)$, where $p$ is either slightly larger than $2$ (see Proposition \ref{Prop4.1}), or $p>3$ (see Theorem \ref{Thm-Lp-reg}).

\begin{Prop}\label{Prop4.1}
Assume $\O$ is a bounded domain in $\Bbb R^3$ with a $C^2$ boundary, and the function $\sigma$ satisfies \eqref{cond-rho}. Let $(u,\H)\in H^1(\O)\times [H_0(\divg0,\O)\cap H(\curl,\O)]$ be a weak solution of \eqref{PZ0} corresponding to the boundary datum $(u^0,\E^0)$.  Then there exists a constant $p_0>2$, which depends only on $\O,\sigma_1,\sigma_2$, such that the following conclusions hold.
\begin{itemize}
\item[(i)] If $\E^0\in L^p(\O,\Bbb R^3)$  with $2<p<p_0$, then $u\in W^{2,p/2}_{\loc}(\O)$, $\H\in W^{1,p}(\O,\Bbb R^3)$ and
\eq\label{est-w1p-H}
\|\H\|_{W^{1,p}(\O)}\leq  C_1\{\|\H\|_{L^2(\O)}+\|\E^0\|_{L^p(\O)}\}.
\eeq
The term $\|\H\|_{L^2(\O)}$ in the right side of  \eqref{est-w1p-H} can be removed if we choose $\H\in \Bbb H_N(\O)^\perp$.
\item[(ii)] If furthermore $u^0\in W^{2,p/2}(\O)$, then
$u\in W^{2,p/2}(\O)$, and we have
\eq\label{est-w1p-u}
\|u\|_{W^{2,p/2}(\O)}\leq C_2\{\|\E^0\|_{L^p(\O)}^2+\|u^0\|_{W^{2,p/2}(\O)}\}.
\eeq
\end{itemize}
In the above, $C_1, C_2$ depend only on $\O, p, \sigma_1, \sigma_2$.
\end{Prop}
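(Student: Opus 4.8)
The plan is to first boost the integrability of $\nabla\times\H$ from $L^2$ to $L^p$ by a Meyers-type argument on the scalar potential $\varphi$ attached to $\H$; once $\nabla\times\H\in L^p$, the $W^{1,p}$ bound for $\H$ follows from $L^p$ regularity for the $\divg$-$\curl$ system on $C^{1,1}$ domains, and the regularity of $u$ from Calder\'on--Zygmund estimates for $-\Delta$, since the right-hand side of the $u$-equation then lies in $L^{p/2}$.

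First I would reproduce the decomposition \eqref{decomh}--\eqref{equation-fai}, which rests only on $\nabla\times[\sigma(u)^{-1}\nabla\times\H-\E^0]=\0$ in $\O$ (the first equation of \eqref{PZ0} together with $\nabla\times\E^0=\0$) and $\nu\times[\sigma(u)^{-1}\nabla\times\H-\E^0]=\0$ on $\p\O$ (the boundary condition), both of which hold for a general weak solution. Lemma \ref{curl0}(ii) then gives $\varphi\in H_0^1(\O)$ and $\h\in\Bbb H_D(\O)$ with $\sigma(u)^{-1}\nabla\times\H-\E^0=\nabla\varphi+\h$, and, using $\nabla\cdot(\nabla\times\H)=0$, $\varphi$ is the weak solution of
$$\nabla\cdot[\sigma(u)\nabla\varphi]=-\nabla\cdot[\sigma(u)(\h+\E^0)]\q\text{in }\O,\q\varphi=0\q\text{on }\p\O.$$
Since $\O$ is of class $C^{1,1}$, $\h\in\Bbb H_D(\O)\subset L^\infty(\O,\Bbb R^3)$, so $\sigma(u)(\h+\E^0)\in L^p(\O,\Bbb R^3)$, while $\sigma(u)$ is bounded and uniformly elliptic by \eqref{cond-rho}. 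By the classical higher-integrability (Meyers) estimate for divergence-form equations with bounded measurable coefficients, there is $p_0=p_0(\O,\sigma_1,\sigma_2)\in(2,6]$ such that for $2<p<p_0$ one has $\nabla\varphi\in L^p(\O)$ and
$$\|\nabla\varphi\|_{L^p(\O)}\le C(\O,p,\sigma_1,\sigma_2)\big(\|\nabla\varphi\|_{L^2(\O)}+\|\h+\E^0\|_{L^p(\O)}\big).$$
Testing the Maxwell weak formulation with $\w=\H$ gives $\|\nabla\times\H\|_{L^2(\O)}\le\sigma_2\|\E^0\|_{L^2(\O)}$; since $\h$ is the $\Bbb H_D(\O)$-component of $\sigma(u)^{-1}\nabla\times\H-\E^0$ and $\Bbb H_D(\O)$ is finite dimensional, $\|\h\|_{L^p(\O)}\le C\|\h\|_{L^2(\O)}\le C(\O,\sigma_1,\sigma_2)\|\E^0\|_{L^2(\O)}$, and testing the equation for $\varphi$ with $\varphi$ gives $\|\nabla\varphi\|_{L^2(\O)}\le(\sigma_2/\sigma_1)\|\h+\E^0\|_{L^2(\O)}$; hence $\|\nabla\varphi\|_{L^p(\O)}\le C(\O,p,\sigma_1,\sigma_2)\|\E^0\|_{L^p(\O)}$ and
$$\nabla\times\H=\sigma(u)(\nabla\varphi+\h+\E^0)\in L^p(\O,\Bbb R^3),\q\|\nabla\times\H\|_{L^p(\O)}\le C(\O,p,\sigma_1,\sigma_2)\|\E^0\|_{L^p(\O)}.$$

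Next, since $\nabla\times\H\in L^p(\O)$, $\nabla\cdot\H=0$, $\nu\cdot\H=0$ on $\p\O$ and $\O$ is $C^2$, the $L^p$ theory for the $\divg$-$\curl$ system (\cite{AS2013}) gives $\H\in W^{1,p}(\O,\Bbb R^3)$ with $\|\H\|_{W^{1,p}(\O)}\le C(\|\H\|_{L^p(\O)}+\|\nabla\times\H\|_{L^p(\O)})$. Moreover, on the $C^{1,1}$ domain $\O$, $\H\in H_0(\divg0,\O)\cap H(\curl,\O)\hookrightarrow H^1(\O,\Bbb R^3)\hookrightarrow L^6(\O,\Bbb R^3)$, so for $p<p_0\le6$ we get $\|\H\|_{L^p(\O)}\le C\|\H\|_{H^1(\O)}\le C(\|\H\|_{L^2(\O)}+\|\nabla\times\H\|_{L^2(\O)})\le C(\|\H\|_{L^2(\O)}+\|\E^0\|_{L^p(\O)})$; this yields \eqref{est-w1p-H}, and if in addition $\H\in\Bbb H_N(\O)^\perp$ then the Poincar\'e-type inequality used in Lemma \ref{lemma3.3} bounds $\|\H\|_{L^2(\O)}$ by $C\|\nabla\times\H\|_{L^2(\O)}\le C\|\E^0\|_{L^p(\O)}$, so that term may be dropped. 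For $u$: $f:=\sigma(u)^{-1}|\nabla\times\H|^2\in L^{p/2}(\O)$ with $\|f\|_{L^{p/2}(\O)}\le\sigma_1^{-1}\|\nabla\times\H\|_{L^p(\O)}^2\le C\|\E^0\|_{L^p(\O)}^2$ and $p/2>1$, so interior Calder\'on--Zygmund estimates for $-\Delta u=f$ give $u\in W^{2,p/2}_{\loc}(\O)$, which completes (i). For (ii), with $u^0\in W^{2,p/2}(\O)$, put $v=u-u^0\in H_0^1(\O)$; then $-\Delta v=f+\Delta u^0\in L^{p/2}(\O)$ with $v=0$ on $\p\O$, and the global $W^{2,p/2}$ Calder\'on--Zygmund estimate on the $C^2$ domain $\O$ yields $\|u\|_{W^{2,p/2}(\O)}\le\|v\|_{W^{2,p/2}(\O)}+\|u^0\|_{W^{2,p/2}(\O)}\le C(\|f\|_{L^{p/2}(\O)}+\|u^0\|_{W^{2,p/2}(\O)})\le C(\|\E^0\|_{L^p(\O)}^2+\|u^0\|_{W^{2,p/2}(\O)})$, which is \eqref{est-w1p-u}.

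The crux is the higher-integrability step: it is what produces the threshold $p_0$ depending only on $\O,\sigma_1,\sigma_2$, and one has to verify that the Meyers/Gehring machinery applies globally to the Dirichlet problem for $\varphi$ with merely bounded measurable coefficient $\sigma(u)$. Everything else is bookkeeping of the $\divg$-$\curl$ decomposition together with standard $L^p$ estimates for the $\divg$-$\curl$ system and for the Laplacian; the only other point to watch is that every right-hand side is controlled by $\|\E^0\|_{L^p(\O)}$ (respectively $\|\H\|_{L^2(\O)}+\|\E^0\|_{L^p(\O)}$), which comes from the energy bound $\|\nabla\times\H\|_{L^2(\O)}\le\sigma_2\|\E^0\|_{L^2(\O)}$ valid for any weak solution.
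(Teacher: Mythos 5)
Your proposal is correct, and its skeleton is the same as the paper's: the decomposition $\sigma(u)^{-1}\nabla\times\H-\E^0=\nabla\varphi+\h$ from Lemma \ref{curl0}(ii), the Dirichlet problem \eqref{eq-Dir} for $\varphi$ with bounded measurable coefficient $\sigma(u)$, Meyers' higher integrability to produce the threshold $p_0(\O,\sigma_1,\sigma_2)>2$ and the bound $\|\nabla\times\H\|_{L^p(\O)}\leq C\|\E^0\|_{L^p(\O)}$, then $L^p$ div-curl regularity for \eqref{div-curl-H} and Calder\'on--Zygmund estimates for \eqref{eq-u}. The one genuine divergence is how you obtain $\H\in L^p(\O,\Bbb R^3)$: the paper runs a duality argument, testing $\H$ against the Helmholtz--Weyl decomposition $\F=\nabla\times\w+\nabla\chi+\z$ of an arbitrary smooth field, which gives $\|\H\|_{L^p(\O)}\leq C(\|\H\|_{L^2(\O)}+\|\nabla\times\H\|_{L^p(\O)})$ for every $p$; you instead invoke the embedding $H_0(\divg0,\O)\cap H(\curl,\O)\hookrightarrow H^1(\O,\Bbb R^3)\hookrightarrow L^6(\O,\Bbb R^3)$ valid on $C^{1,1}$ domains and cap $p_0$ at $6$. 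Your route is more elementary and perfectly adequate here, since the proposition only asserts the existence of some $p_0>2$ (and the Meyers exponent is close to $2$ anyway); the price is the artificial ceiling $p_0\le 6$, and it is worth noting that the paper's duality step is reused verbatim in the proof of Theorem \ref{Thm-Lp-reg} for arbitrary $q>3$, where the $H^1\hookrightarrow L^6$ shortcut would no longer suffice. All your auxiliary estimates (the energy bound from testing with $\w=\H$, the $L^2$ bound for $\nabla\varphi$, the finite-dimensionality bound for $\h$, and the global $W^{2,p/2}$ estimate for $u-u^0$ on the $C^2$ domain) check out.
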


\begin{proof} We first mention that, if $(u,\H)$ is a weak solution of \eqref{PZ0}, then for any $\h\in \Bbb H_N(\O)$, $(u,\H+\h)$ is also a weak solution of \eqref{PZ0}. Since $\Bbb H_N(\O)$ is of finite dimension,  we can always choose $\h\in \Bbb H_N(\O)$ such that $\H+\h\in \Bbb H_N(\O)^\perp$.

 {\it Step 1}.
Since
$$\aligned
&\nabla\times[\sigma(u)^{-1}\nabla\times\H-\E^0]=\0\q\text{in }\O,\\
&\nu\times[\sigma(u)^{-1}\nabla\times\H-\E^0]=\0\q\text{on }\p\O,
\endaligned
$$
by Lemma \ref{curl0}, there exist $\varphi\in H_0^1(\O)$ and $\h\in \Bbb H_D(\O)$ such that
\eqref{decom-3.1} holds.
Write
$$\h=\sum_{j=1}^mc_j\e_j,\q
c_j=\int_\O\e_j\cdot(\sigma(u)^{-1}\nabla\times\H-\E^0)dx,\q j=1,\cdots, m.
$$
Since $\O$ is of class $C^2$, we have $\Bbb H_D(\O)\subset C^0(\overline{\O},\Bbb R^3)$ and
\eq\label{Lp-h}
\|\h\|_{L^p(\O)}\leq C_3\|\E^0\|_{L^2(\O)},
\eeq
where $C_3$ depends on $\O, p, \sigma_1, \sigma_2$.
\v0.05in

{\it Step 2.}
By a similar derivation used for \eqref{equation-fai}, we see that $\varphi$ is a weak solution of
\begin{equation}\label{eq-Dir}
\left\{\aligned
&\nabla\cdot[\sigma(u)(\nabla\varphi+\h+\E^0)]=0\q &\text{in } \O,\\
&\varphi=0\q &\text{on } \p\O.
\endaligned\right.
\end{equation}
We show that there exists $p_0>2$ which depends only on $\O, \sigma_1, \sigma_2$, but is independent of the solution, such that for any $2<p<p_0$ and for any weak solution $\varphi$ of \eqref{eq-Dir}, it holds that
\eq\label{est-4.5}
\|\nabla\varphi\|_{L^p(\O)}\leq C_4\|\E^0\|_{L^p(\O)},
\eeq
where $C_4$ depends on $\O, p, \sigma_1, \sigma_2$.

In fact, by Meyers' estimate of higher integrability of gradient (see \cite{Meyers1963}), there exists $p_0>2$, which depends only on $\O,\sigma_1,\sigma_2$, but it is independent of the solution $\varphi$ of \eqref{eq-Dir}, such that $\varphi\in W^{1,p}(\O)$ for any $2<p< p_0$. Moreover, we have the estimate
$$
\|\nabla\varphi\|_{L^p(\O)}\leq C(\O,p,\sigma_1, \sigma_2)\{\|\h\|_{L^p(\O)}+\|\E^0\|_{L^p(\O)}\}.
$$
From this and using  \eqref{Lp-h} we get \eqref{est-4.5}.

\v0.05in

{\it Step 3}. We prove $\H\in W^{1,p}(\O,\Bbb R^3)$.

First, from \eqref{Lp-h} and \eqref{est-4.5} we see that $\nabla\times \H\in L^p(\O,\Bbb R^3)$ and
\eq\label{est-curl-Hp}
\|\nabla\times\H\|_{L^p(\O)}\leq C_5\|\E^0\|_{L^p(\O)},
\eeq
where $C_5$ depends on $\O, p, \sigma_1, \sigma_2$.

Next, we show $\H\in L^p(\O,\Bbb R^3)$. We prove this by a duality method, which has been used in the proof of \cite[Lemma 3.1]{XZ2015}.
Given $\F\in C_c^\infty(\O,\Bbb R^3)$ and $1<r<\infty$, by Helmholtz-Weyl decomposition (see \cite[Theorem 6.1]{AS2013} or \cite[Theorem 2.1]{KY2009}), there exist $\w\in W^{1,r}(\O,\Bbb R^3)$ with $\nu\times\w=\0$ on $\p\O$, $\chi\in  W^{1,r}(\O)$,  and $\z\in \Bbb H_N(\O)$ such that
$$\F=\nabla\times \w+\nabla \chi+\z.
$$
Moreover, the triplet $(\w,\chi,\z)$ satisfies the estimate
\begin{equation}\label{est-dec}
\|\w\|_{W^{1,r}(\O)}+\|\chi\|_{W^{1,r}(\O)}+\|\z\|_{L^\infty(\O)}\leq C(\O,r)\|\F\|_{L^r(\O)}.
\end{equation}

Using \eqref{decom-3.1} we see that $\H$ satisfies the following div-curl system
\begin{equation}\label{div-curl-H}
\left\{\aligned
&\nabla\times\H=\sigma(u)(\nabla\varphi+\h+\E^0),\q \nabla\cdot \H=0\q&\text{in }\O,\\
&\nu\cdot\H=0\q&\text{on }\p\O,
\endaligned\right.
\end{equation}
where $\sigma(u)(\nabla\varphi+\h+\E^0)\in L^p(\O,\Bbb R^3)$. Since $\nu\times\w=\0$ on $\p\O$, $\nabla\cdot \H=0$ in $\O$ and $\nu\cdot\H=0$ on $\p\O$, we have
$$\aligned
&\int_\O  \H\cdot(\nabla\times \w) dx=\int_\O (\nabla\times\H)\cdot\w dx+\int_{\p\O}(\nu\times\w)\cdot\H dS=\int_\O\nabla\times\H\cdot\w dx,\\
&\int_\O \H\cdot\nabla \chi dx=\int_{\p\O} (\nu\cdot\H) \chi dS-\int_\O  (\nabla\cdot\H)\chi dx=0.
\endaligned
$$
Using the above two equalities and \eqref{est-dec}, we get
$$
\aligned
\int_\O \H\cdot \F dx&=\int_\O \H\cdot(\nabla\times \w+\nabla \chi+\z)dx\\
&=\int_\O (\nabla\times\H)\cdot\w+\H\cdot \z dx\\
&\leq C(\O,p)\{\|\nabla\times\H\|_{L^p(\O)}+
\|\H\|_{L^2(\O)}\}\|\F\|_{L^{p'}(\O)},
\endaligned
$$
from which we obtain $\H\in L^p(\O,\Bbb R^3)$ with the estimate
\eq\label{H-lp}
\|\H\|_{L^{p}(\O)}\leq C(\O,p)\{\|\H\|_{L^2(\O)}+\|\nabla\times\H\|_{L^p(\O)}\}.
\eeq

With $\H\in L^p(\O,\Bbb R^3)$ in hand, we can apply the $L^p$ regularity theory for the div-curl systems (see  \cite[Theorem 2.2]{AS2011a} and \cite[Theorem 3.5]{AS2013}; see also \cite{vonW1992} and \cite{KY2009}) to \eqref{div-curl-H}, and conclude that $\H\in W^{1,p}(\O,\Bbb R^3)$. Since $\div\H=0$ in $\O$ and $\nu\cdot\H=0$ on $\p\O$, we have
$$
\|\H\|_{W^{1,p}(\O)}\leq C(\O,p)\{\|\H\|_{L^p(\O)}+\|\nabla\times\H\|_{L^p(\O)}\}.
$$
From this, \eqref{est-curl-Hp} and \eqref{H-lp},  we get \eqref{est-w1p-H}.

If we choose $\H\in \Bbb H_N(\O)^\perp$, then, $\div\H=0$ in $\O$, and $\nu\cdot\H=0$ on $\p\O$, so we can use the following Poincar\'{e} type inequality
$$
\|\H\|_{L^2(\O)}\leq C(\O)\|\nabla\times\H\|_{L^2(\O)}.
$$
From this and \eqref{M-C1},  by increasing the constant $C_1$  if necessary, we can remove the term $\|\H\|_{L^2(\O)}$ in the right side of \eqref{est-w1p-H}.

\v0.05in

{\it Step 4}. Finally, using \eqref{cond-rho} we see that $\sigma(u)^{-1}|\nabla\times\H|^2\in L^{p/2}(\O)$.
Applying elliptic regularity theory to the Laplace equation \eqref{eq-u}, we see that $u\in W^{2,p/2}_{\loc}(\O)$. If furthermore $u^0\in W^{2,p/2}(\O)$, then we have $u\in W^{2,p/2}(\O)$, and
$$\|u\|_{W^{2,p/2}(\O)}\leq C(\O,p)\{\|\sigma(u)^{-1}|\nabla\times\H|^2\|_{L^{p/2}(\O)}+\|u^0\|_{W^{2,p/2}(\O)}\}.
$$
From this and \eqref{est-curl-Hp} we get \eqref{est-w1p-u}.
\end{proof}



Now we show that if $(u^0,\E^0)$ satisfies
\eq\label{u0H0w1q}
(u^0,\E^0)\in W^{1,q}(\O)\times L^q(\O,\Bbb R^3)\q\text{for some }q>3,
\eeq
then the weak solution of \eqref{PZ0} has $W^{1,q}$ regularity.

\begin{Thm}\label{Thm-Lp-reg}
Assume that $\O$ is a bounded domain in $\Bbb R^3$ with a $C^2$ boundary, the function $\sigma$ satisfies \eqref{cond-rho}, and
$(u^0,\E^0)$ satisfies \eqref{u0H0w1q}. Let $(u,\H)\in H^1(\O)\times [H_0(\divg0,\O)\cap H(\curl,\O)]$ be a weak solution of \eqref{PZ0}. Then we have the following conclusions.
\begin{itemize}
\item[(i)] $u\in C^{0,(\mu-1)/2}(\overline{\O})$ for all $1<\mu<1+2\min\{\delta,1-3/q\}$, where $\delta=\delta(\O,\sigma_1,\sigma_2)\in (0,1)$, and there exists $C_1=C_1(\O,\mu,q,\sigma_1,\sigma_2)>0$ such that
\eq
\|u\|_{C^{0,(\mu-1)/2}(\overline{\O})}\leq C_1\{\|\E^0\|_{L^q(\O)}^2+\|u^0\|_{W^{1,q}(\O)}\}.
\eeq
\item[(ii)] $(u,\H)\in W^{1,q}(\O)\times W^{1,q}(\O,\Bbb R^3)$\footnote{Then by Morrey embedding theorem,
$(u,\H)\in C^{0, 1-3/q}(\overline{\O})\times C^{0,1-3/q}(\overline{\O},\Bbb R^3).$}, and
\eq\label{est-w1p-H-2}
\|\H\|_{W^{1,q}(\O)}\leq  C_2\{\|\H\|_{L^2(\O)}+\|\E^0\|_{L^q(\O)}\},
\eeq
where $C_2$ depends on $\O,q,\sigma_1,\sigma_2$ and  the VMO modulus of continuity of $\sigma(u)$.
If furthermore we choose $\H\in \Bbb H_N(\O)^\perp$, then the term $\|\H\|_{L^2(\O)}$ in the right side of  \eqref{est-w1p-H-2} can be removed.
\item[(iii)]  Assume furthermore the function $\sigma$ satisfies \eqref{Lip-L}.
Then we have the estimate
\eq\label{curl-key}
\|\nabla\times\H\|_{L^q(\O)}\leq  C_3\|\E^0\|_{L^q(\O)},
\eeq
where the constant $C_3$ depends only on $\O,q,\sigma_1,\sigma_2,L$, and the $W^{1,q}(\O)$ norm of $u^0$ and the $L^q(\O)$ norm of $\E^0$.
\end{itemize}
\end{Thm}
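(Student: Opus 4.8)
\emph{Proof plan.} As in the proof of Proposition \ref{Prop4.1} we may assume $\H\in\Bbb H_N(\O)^\perp$, and we keep the decomposition \eqref{decom-3.1}, $\sigma(u)^{-1}\nabla\times\H-\E^0=\nabla\varphi+\h$, where $\varphi\in H_0^1(\O)$ solves \eqref{eqphi-1} and $\h\in\Bbb H_D(\O)$; since $\O$ is $C^2$, $\Bbb H_D(\O)\subset C^0(\overline{\O},\Bbb R^3)$, so $\h$ is bounded. Because $\nabla\times\E^0=\0$, Lemma \ref{curl0}(i) gives $\E^0=\nabla\varphi^0+\h_1$ with $\varphi^0\in\dot H^1(\O)$ and $\h_1\in\Bbb H_N(\O)$; as $\O$ is $C^2$, $\h_1\in C^0(\overline{\O},\Bbb R^3)$, whence $\nabla\varphi^0=\E^0-\h_1\in L^q(\O)$ and $\varphi^0\in\dot W^{1,q}(\O)\hookrightarrow C^{0,1-3/q}(\overline{\O})$ is bounded. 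Finally, using $\nabla\cdot(\nabla\times\H)=0$ and $\nabla\varphi^0\cdot\nabla\times\H=\nabla\cdot(\varphi^0\,\nabla\times\H)$, one records the identity
$$
\sigma(u)^{-1}|\nabla\times\H|^2=\sigma(u)|\nabla\varphi+\h+\E^0|^2=\nabla\cdot\big[(\varphi+\varphi^0)\,\nabla\times\H\big]+(\h+\h_1)\cdot\nabla\times\H.
$$

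\emph{Part (i).} The plan is to apply Lemma \ref{M-Campanato3} twice. First, apply it to \eqref{eqphi-1}, viewed as $\nabla\cdot(\sigma(u)\nabla\varphi)=\nabla\cdot\big(-\sigma(u)(\h+\E^0)\big)$, with $A=\sigma(u)I$ (ellipticity constants $\sigma_1,\sigma_2$, which fixes the constant $\delta=\delta(\O,\sigma_1,\sigma_2)$), $f=0$, $\F=-\sigma(u)(\h+\E^0)$, zero boundary value. Since $\h\in L^\infty(\O)$, since $\E^0\in L^q(\O)\hookrightarrow L^{2,\mu}(\O)$ for $\mu\le 3(q-2)/q=1+2(1-3/q)$ by Lemma \ref{M-Campanato}(iv), and since $L^\infty$ is a multiplier on $L^{2,\mu}$ by Lemma \ref{M-Campanato}(i), we get $\F\in L^{2,\mu}(\O)$, hence $\nabla\varphi\in L^{2,\mu}(\O)$ for every $1<\mu<1+2\min\{\delta,1-3/q\}$ with the corresponding estimate; consequently $\nabla\times\H=\sigma(u)(\nabla\varphi+\h+\E^0)\in L^{2,\mu}(\O)$, and by Lemma \ref{M-Campanato}(iii),(ii) also $\varphi\in L^{2,2+\mu}(\O)\hookrightarrow C^{0,(\mu-1)/2}(\overline{\O})$ is bounded. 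Then $(\varphi+\varphi^0)\,\nabla\times\H$ and $(\h+\h_1)\cdot\nabla\times\H$ lie in $L^{2,\mu}(\O)$ (bounded functions times an $L^{2,\mu}$ field). A second application of Lemma \ref{M-Campanato3}, now to $-\Delta u=\nabla\cdot[(\varphi+\varphi^0)\,\nabla\times\H]+(\h+\h_1)\cdot\nabla\times\H$ with $A=I$ (for the Laplacian the admissible range of exponents contains $(0,1+2\delta)$, since its Campanato exponent is at least $\delta$), with $\F=-(\varphi+\varphi^0)\,\nabla\times\H\in L^{2,\mu}(\O)$, $f=-(\h+\h_1)\cdot\nabla\times\H\in L^{2,(\mu-2)^+}(\O)$ (by the embedding $L^{2,\mu}\hookrightarrow L^{2,(\mu-2)^+}$), and $\nabla u^0\in L^q(\O)\hookrightarrow L^{2,\mu}(\O)$, gives $\nabla u\in L^{2,\mu}(\O)$. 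Then $u\in L^{2,2+\mu}(\O)$ by Lemma \ref{M-Campanato}(iii), and since $3<2+\mu<5$, Lemma \ref{M-Campanato}(ii) yields $u\in C^{0,(\mu-1)/2}(\overline{\O})$. The quantitative bound is obtained by assembling \eqref{M-C1}, \eqref{Const-K} and the corresponding $H^1$-estimate for $u$, together with the two estimates from Lemma \ref{M-Campanato3}; the quadratic term $\|\E^0\|_{L^q}^2$ appears because $\F$ and $f$ in the second application are products of factors each controlled by $\|\E^0\|_{L^q}$.

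\emph{Parts (ii) and (iii).} By part (i), $u\in C^{0,\alpha}(\overline{\O})$ for some $\alpha>0$, hence $\sigma(u)$ is continuous on $\overline{\O}$, in particular $\sigma(u)\in\mathrm{VMO}(\O)$. Applying the $L^q$-regularity theory for divergence-form elliptic equations with $\mathrm{VMO}$ coefficients on the $C^2$ domain $\O$ to \eqref{eqphi-1} (the datum $-\sigma(u)(\h+\E^0)$ lies in $L^q$ since $\h\in L^\infty$), we obtain $\nabla\varphi\in L^q(\O)$, hence $\nabla\times\H=\sigma(u)(\nabla\varphi+\h+\E^0)\in L^q(\O)$, the constant depending on the $\mathrm{VMO}$ modulus of $\sigma(u)$. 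Running the duality argument and the $L^q$ div--curl regularity exactly as in Step 3 of the proof of Proposition \ref{Prop4.1} (with $p=q$) gives $\H\in W^{1,q}(\O,\Bbb R^3)$ and \eqref{est-w1p-H-2}, the term $\|\H\|_{L^2(\O)}$ being removable when $\H\in\Bbb H_N(\O)^\perp$ by the Poincar\'e-type inequality. For $u$, rewrite \eqref{eq-u} as $-\Delta u=\nabla\cdot\F+f$ with $\F=(\varphi+\varphi^0)\,\nabla\times\H\in L^q(\O)$ and $f=(\h+\h_1)\cdot\nabla\times\H\in L^q(\O)\hookrightarrow W^{-1,q}(\O)$; since $u^0\in W^{1,q}(\O)$ and $\O$ is $C^2$, the $W^{1,q}$-solvability of the Dirichlet Laplacian gives $u\in W^{1,q}(\O)$. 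This proves (ii). For (iii), if $\sigma$ satisfies \eqref{Lip-L} then $|\sigma(u(x))-\sigma(u(y))|\le L|u(x)-u(y)|$, so the modulus of continuity, hence the $\mathrm{VMO}$ modulus, of $\sigma(u)$ is controlled by $L$ and $\|u\|_{C^{0,(\mu-1)/2}(\overline{\O})}$, which by part (i) is bounded by a quantity depending only on $\O,\mu,q,\sigma_1,\sigma_2,\|\E^0\|_{L^q(\O)},\|u^0\|_{W^{1,q}(\O)}$. Substituting this bound for the $\mathrm{VMO}$ modulus into the constant $C_2$ of \eqref{est-w1p-H-2}, taken with $\H\in\Bbb H_N(\O)^\perp$, yields $\|\nabla\times\H\|_{L^q(\O)}\le\|\H\|_{W^{1,q}(\O)}\le C_3\|\E^0\|_{L^q(\O)}$ with $C_3$ depending only on the listed quantities, which is \eqref{curl-key}.

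\emph{Main obstacle.} The delicate points are the bookkeeping of constants through the two Campanato steps in part (i) and, for (ii)--(iii), invoking the $\mathrm{VMO}$ Calder\'on--Zygmund estimate with a constant depending only on the $\mathrm{VMO}$ modulus of $\sigma(u)$ (this is exactly what lets Lipschitz continuity of $\sigma$ together with part (i) close the clean estimate \eqref{curl-key}). It should also be stressed that in part (i) the refined decomposition through $\varphi^0,\h_1$ is indispensable, rather than $\sigma(u)^{-1}|\nabla\times\H|^2=\nabla\cdot(\varphi\,\nabla\times\H)+(\h+\E^0)\cdot\nabla\times\H$: the term $\E^0\cdot\nabla\times\H$ with $\E^0\in L^q(\O)$ and $\nabla\times\H\in L^2(\O)$ lies only in $L^{2q/(q+2)}(\O)$, which embeds into no useful Campanato space, whereas $\varphi^0$ and $\h_1$ are bounded.
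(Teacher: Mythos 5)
Your proposal is correct and follows essentially the same route as the paper: the decomposition \eqref{decom-3.1}, two applications of Lemma \ref{M-Campanato3} (first to \eqref{eqphi-1}, then to the $u$-equation rewritten via the splitting $\E^0=\nabla\varphi^0+\h_1$ exactly as in \eqref{equ-6}), the VMO-coefficient $W^{1,q}$ estimate for $\varphi$ followed by the duality/div-curl argument of Proposition \ref{Prop4.1} for $\H$, and for (iii) the control of the regularity of $\sigma(u)$ through \eqref{Lip-L} and part (i). The only cosmetic differences are that the paper gets $u\in W^{1,q}(\O)$ from the rewriting $-\Delta u=\nabla\cdot[\H\times\sigma(u)^{-1}\nabla\times\H]$ (\eqref{equ-5}) rather than your divergence-form right-hand side, and in (iii) it invokes a H\"older-coefficient estimate to make the constant's dependence explicit instead of tracking it through the VMO modulus; both variants are sound.
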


\begin{proof}
The key point in the proof is to establish  first the H\"{o}lder continuity of $u$.

{\it Step 1}.
Let $(u,\H)$ be a weak solution of \eqref{PZ0}.
As in the proof of Theorem \ref{PZ1}, we have the equality \eqref{decom-3.1},
where $\h\in \Bbb H_D(\O)$, $\H$, $\varphi$ and $u$ are solutions of \eqref{equation-H}, \eqref{equation-fai} and \eqref{equation-u}, with $w$ replaced by $u$, respectively.
By Lemma \ref{lemma3.3}, we have the $L^2$ estimate for $\nabla\times\H$:
$$\|\nabla\times\H\|_{L^2(\O)}\leq\sigma_2\|\E^0\|_{L^2(\O)},$$
and the $L^2$ estimate for $u$:
\begin{equation}\label{R1}
\|u\|_{L^2(\O)}\leq C_4\{\|\E^0\|_{L^q(\O)}\|\E^0\|_{L^2(\O)}+\|u^0\|_{H^1(\O)}\},
\end{equation}
where $C_4$ depends on $\O,q,\sigma_1,\sigma_2$.

Next we show the inequality
\begin{equation}\label{R2}
\|\varphi\|_{C^{0,(\mu-1)/2}(\overline{\O})}+\|\nabla\varphi\|_{L^{2,\mu}(\O)}\leq C_5\|\E^0\|_{L^q(\O)},
\end{equation}
where  $1<\mu<1+2\min\{\delta,1-3/q\}$, the constants $\delta=\delta(\O,\sigma_1,\sigma_2)\in (0,1)$, and $C_5=C_5(\O,\mu,q,\sigma_1,\sigma_2)$.  To prove this conclusion, we apply Lemma \ref{M-Campanato3} to \eqref{equation-fai} and conclude that, there exists $\delta=\delta(\O,\sigma_1,\sigma_2)\in (0,1)$, such that the following estimate holds for all $0<\mu<1+2\delta$:
\begin{equation}\label{M-C-fai}
\aligned
\|\nabla\varphi\|_{L^{2,\mu}(\O)}&\leq C\{\|\varphi\|_{H^1(\O)}+\|\sigma(u)(\h+\E^0)\|_{L^{2,\mu}(\O)}\}\\
&\leq C\{\|\E^0\|_{L^2(\O)}+\|\h\|_{L^{2,\mu}(\O)}+\|\E^0\|_{L^{2,\mu}(\O)}\}.
\endaligned
\end{equation}
Here we have used \eqref{M-C1} and the standard $H^1$ estimate for \eqref{eq-phi}, and the constant $C$ depends on $\O,\mu,\sigma_1,\sigma_2$.
Then by Lemma \ref{M-Campanato} we obtain
\begin{equation}\label{M-C2}
\aligned
\|\varphi\|_{L^{2,\mu+2}(\O)}&\leq C(\O,\mu)\{\|\varphi\|_{L^2(\O)}+\|\nabla\varphi\|_{L^{2,\mu}(\O)}\}\\
&\leq C_6\{\|\E^0\|_{L^2(\O)}+\|\h\|_{L^{2,\mu}(\O)}+\|\E^0\|_{L^{2,\mu}(\O)}\},
\endaligned
\end{equation}
where the constant $C_6$ depends on $\O,\mu,\sigma_1,\sigma_2$.

Since $\O$ is of class $C^2$, $\mathbb{H}_D(\O)\subset C^0(\overline{\O},\Bbb R^3)$. Recalling that $\h$ is in the form of \eqref{formula-cj}, we get
\begin{equation}\label{est-infty}
\|\h\|_{L^\infty(\O)}\leq C(\O,\sigma_1,\sigma_2)\|\E^0\|_{L^2(\O)}.
\end{equation}
Using the embedding of $L^r(\O)$ into a Campanato space, we have
$$L^{r}(\O)\subseteq L^{2, 3-6/r}(\O)\subseteq L^{2,\mu}(\O)\q\text{for }r>2,\;\; 0\leq \mu\leq 3-6/r.
$$
So we get
\begin{equation}\label{M-C3}
\aligned
\|\h\|_{L^{2,\mu}(\O)}+\|\E^0\|_{L^{2,\mu}(\O)}&\leq C(\O,\mu,q)\{\|\h\|_{L^{q}(\O)}+\|\E^0\|_{L^{q}(\O)}\}\\
&\leq C(\O,\mu,q)\{\|\h\|_{L^\infty(\O)}+\|\E^0\|_{L^{q}(\O)}\},
\endaligned
\end{equation}
where $1<\mu<1+2\min\{\delta,1-3/q\}$. Then by Lemma \ref{M-Campanato}, inequalities \eqref{M-C-fai}, \eqref{M-C2}, \eqref{est-infty} and
\eqref{M-C3} we get \eqref{R2}.

\v0.05in

{\it Step 2}.
Since $\nabla\times\H=\sigma(u)(\nabla\varphi+\h+\E^0)$, by \eqref{R2}, \eqref{M-C3} and \eqref{est-infty} we get
\begin{equation}\label{R3}
\|\nabla\times\H\|_{L^{2,\mu}(\O)}\leq C(\O,\mu,q,\sigma_1,\sigma_2)\|\E^0\|_{L^q(\O)}.
\end{equation}
Since $\nabla\times\E^0=0$ in $\O$, by Lemma \ref{curl0}, there exist $\varphi^0\in \dot{H}^1(\O)$ and $\h_1\in \Bbb H_N(\O)$ such that $\E^0=\nabla\varphi^0+\h_1$. Here $\varphi^0$ satisfies that $\Delta\varphi^0=\nabla\cdot \E^0$ in $\O$ and $\nu\cdot\nabla\varphi^0=\nu\cdot\E^0$ on $\p\O$.
Hence $\varphi^0\in W^{1,q}(\O)$. Similar to \eqref{est-infty}, we also have the $L^\infty$ estimate for $\h_1$:
\begin{equation}\label{est-infty1}
\|\h_1\|_{L^\infty(\O)}\leq C(\O)\|\E^0\|_{L^2(\O)}.
\end{equation}
Since
$$
\aligned
\sigma(u)^{-1}|\nabla\times\H|^2&=(\nabla\varphi+\h+\E^0)\cdot\nabla\times\H=[\nabla(\varphi+\varphi^0)+\h+\h_1]\cdot\nabla\times\H\\
&=\nabla\cdot[(\varphi+\varphi^0)\nabla\times\H]+(\h+\h_1)\cdot\nabla\times\H,
\endaligned
$$
we can write the equation for $u$ in the following form:
\begin{equation}\label{equ-6}
-\Delta u=\nabla\cdot[(\varphi+\varphi^0)\nabla\times\H]+(\h+\h_1)\cdot\nabla\times\H \q\text{in } \O,\q
u=u^0\q\text{ on } \p\O.
\end{equation}
Applying Lemma \ref{M-Campanato3} to the above equation, and using Lemma \ref{M-Campanato} we obtain
$$
\aligned
\|\nabla u\|_{L^{2,\mu}(\O)}&\leq C\{\|u\|_{H^1(\O)}+\|(\h+\h_1)\cdot\nabla\times\H\|_{L^{2,(\mu-2)^+}(\O)}\\
&+\|(\varphi+\varphi^0)\nabla\times\H\|_{L^{2,\mu}(\O)}+\|\nabla u^0\|_{L^{2,\mu}(\O)}\}\\
&\leq C_7\{(\|\h+\h_1\|_{L^\infty(\O)}+\|\varphi+\varphi^0\|_{L^\infty(\O)})\|\nabla\times\H\|_{L^{2,\mu}(\O)}\\
&+\|\E^0\|_{L^q(\O)}\|\E^0\|_{L^2(\O)}+\|u^0\|_{H^1(\O)}+\|\nabla u^0\|_{L^{2,\mu}(\O)}\},
\endaligned
$$
where $C_7$ depends on $\O,\mu,q,\sigma_1,\sigma_2$. Then by Lemma \ref{M-Campanato}, inequalities \eqref{R1}, \eqref{R2}, \eqref{est-infty}, \eqref{R3}, \eqref{est-infty1},  we get
\begin{equation*}
\aligned
\|u\|_{C^{0,(\mu-1)/2}(\overline{\O})}&\leq C\|u\|_{L^{2,\mu+2}(\O)}
\leq C\{\|u\|_{L^2(\O)}+\|\nabla u\|_{L^{2,\mu}(\O)}\}\\
&\leq C_8\{\|\E^0\|_{L^q(\O)}^2+\|u^0\|_{W^{1,q}(\O)}\},\\
\endaligned
\end{equation*}
where $C_8$ depends on $\O,\mu,q,\sigma_1,\sigma_2$.

\v0.05in

{\it Step 3}. Now $u$ is continuous on $\bar\O$. By the continuity of the function $\s$ we see that $\s(u)$ is continuous, hence $\s(u)\in VMO(\O)$.
Therefore we can apply \cite[Theorem 1]{AQ2002} to Dirichlet problem \eqref{eq-Dir}, and get $\varphi\in W^{1,q}(\O)$ with the estimate
$$
\|\nabla\varphi\|_{L^q(\O)}\leq C_9\|\sigma(u)(\h+\E^0)\|_{L^q(\O)},
$$
where $C_9$ depends on $\O,q,\sigma_1,\sigma_2$ and  the VMO modulus of continuity of $\sigma(u)$. Hence
$$\nabla\times \H=\sigma(u)(\nabla\varphi+\h+\E^0)\in L^q(\O,\Bbb R^3).
$$
Then, in the same way as in Step 3 of the proof of Proposition \ref{Prop4.1}, we conclude that $\H\in W^{1,q}(\O,\Bbb R^3)$, and we also have the estimate
$$
\|\H\|_{W^{1,q}(\O)}\leq C_{10}\{\|\H\|_{L^2(\O)}+\|\E^0\|_{L^q(\O)}\},
$$
where the constant $C_{10}$, differently from the case of Proposition \ref{Prop4.1}, depends not only $\O, q, \sigma_1, \sigma_2$, but also on  the VMO modulus of continuity of $\sigma(u)$.

Next we re-write the equation for $u$ in the following form
\eq\label{equ-5}
-\Delta u=\nabla\cdot[\H\times\sigma(u)^{-1}\nabla\times \H]\q\text{in }\O,\q
 u=u^0 \text{ on } \p\O.
\eeq
Since $q>3$, by the Sobolev embedding theorem we see that $\H\in C^0(\overline\O,\Bbb R^3)$. Hence
$$\aligned
&\nabla\cdot(\H\times\sigma(u)^{-1}\nabla\times \H)\in W^{-1,q}(\O).
\endaligned
$$
By elliptic regularity theory, we obtain $u\in W^{1,q}(\O)$.
We can also obtain an estimate of $\|u\|_{W^{1,q}(\O)}$, with the constant depending also on  the VMO modulus of continuity of $\sigma(u)$.

{\it Step 4}. Assume $\sigma$ satisfies \eqref{Lip-L}. Then $\s(u)\in C^{0,(\mu-1)/2}(\overline{\O})$. Applying \cite[Theorem 3.16 (iv)]{Tro1987} to Dirichlet problem \eqref{eq-Dir}, we obtain
$$
\aligned
\|\nabla\varphi\|_{L^q(\O)}&\leq C\|\sigma(u)(\h+\E^0)\|_{L^q(\O)}\leq C_{11}\|\E^0\|_{L^q(\O)},
\endaligned
$$
where $C_{11}$ depends on $\O,q,\sigma_1,\sigma_2$, and also on the bound of $\|\s(u)\|_{C^{0,(\mu-1)/2}(\overline{\O})}$, which can be estimated as follows:
$$
\aligned
\|\s(u)\|_{C^{0,(\mu-1)/2}(\overline{\O})}&\leq\sigma_2+L\|u\|_{C^{0,(\mu-1)/2}(\overline{\O})}\\
&\leq\sigma_2+LC_8\Big\{\|\E^0\|_{L^q(\O)}^2+\|u^0\|_{W^{1,q}(\O)}\Big\}.
\endaligned
$$
So $C_{11}$ depends only on $\O,q,\sigma_1,\sigma_2,L$ and the norms of $\E^0$ and $u^0$ appearing in the above inequality. Thus we get (iii).
\end{proof}

\begin{Rem}\label{Rem-key}
Let $\|\E^0\|_{L^q(\O)}\leq 1$. Then
$$
\|\s(u)\|_{C^{0,(\mu-1)/2}(\overline{\O})}\leq\sigma_2+LC_8\Big\{1+\|u^0\|_{W^{1,q}(\O)}\Big\}.
$$
Hence the constant $C_3$ in \eqref{curl-key} depends only on $\O,q,\sigma_1,\sigma_2,L$, $\|u^0\|_{W^{1,q}(\O)}$. This point will play an important role in the proof of small boundary data uniqueness in section 5.
\end{Rem}

\v0.05in

\subsection{H\"older continuity of derivatives}\

Next we prove H\"older continuity of derivatives of $u$ and $\H$ as the domain and boundary data allow.

\begin{Thm}\label{Thm-Holder}
Let $k\geq 0$ be an integer, and $\a\in(0,1)$. Let $\O$ be a bounded and simply connected $C^{k+2,\a}$ domain in $\Bbb R^3$, and $\sigma$ satisfy \eqref{cond-rho}. Let $(u,\H)\in H^1(\O)\times [H_0(\divg0,\O)\cap H(\curl,\O)]$ be a weak solution of \eqref{PZ0}. Assume in addition that
\eq\label{Holder-bdry}
 (u^0, \E^0)\in C^{k+1,\a}(\overline{\O})\times C^{k,\a}(\overline{\O},\Bbb R^3),\q \sigma\in C^{k,\a}_{\loc}(\Bbb R),
\eeq
then we have $(u, \H)\in C^{k+1,\a}(\overline{\O})\times C^{k+1,\a}(\overline{\O},\Bbb R^3)$.
\end{Thm}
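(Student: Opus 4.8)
The plan is to run a bootstrap argument on the regularity of $\H$ and $u$ alternately, using Schauder theory for elliptic equations and for div-curl systems, much as in the proof of Theorem \ref{Thm-Lp-reg} but now in H\"older spaces. The starting point is Theorem \ref{Thm-Lp-reg}(ii): under the hypotheses of the present theorem we already have $(u,\H)\in W^{1,q}(\O)\times W^{1,q}(\O,\Bbb R^3)$ for any $q>3$, hence $(u,\H)\in C^{0,\b}(\overline\O)\times C^{0,\b}(\overline\O,\Bbb R^3)$ for every $\b\in(0,1)$, by Morrey embedding. In particular $\s(u)\in C^{0,\b}(\overline\O)$ for every $\b\in(0,1)$, since $\s\in C^{0,\a}_{\loc}(\Bbb R)$ (and when $k\geq1$, $\s\in C^{k,\a}_{\loc}$ gives even better composition regularity once $u$ is more regular). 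This gives us the base case $k=0$ of the induction essentially for free once we upgrade $\H$ from $C^{0,\b}$ to $C^{1,\a}$.

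First I would handle $\H$. Since $\O$ is simply connected, $\Bbb H_D(\O)=\{\0\}$, so the decomposition \eqref{decom-3.1} reads $\s(u)^{-1}\nabla\times\H-\E^0=\nabla\varphi$ with $\varphi\in H^1_0(\O)$ solving the Dirichlet problem \eqref{eq-Dir} (with $\h=\0$), i.e. $\nabla\cdot[\s(u)(\nabla\varphi+\E^0)]=0$ in $\O$, $\varphi=0$ on $\p\O$. With $\s(u)\in C^{k,\a}(\overline\O)$ (for $k=0$ just $C^{0,\a}$) and $\E^0\in C^{k,\a}(\overline\O,\Bbb R^3)$, Schauder theory for divergence-form elliptic equations gives $\varphi\in C^{k+1,\a}(\overline\O)$, hence $\nabla\times\H=\s(u)(\nabla\varphi+\E^0)\in C^{k,\a}(\overline\O,\Bbb R^3)$. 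Combined with $\nabla\cdot\H=0$ and the boundary condition $\nu\cdot\H=0$ on $\p\O$ (of class $C^{k+2,\a}$), the regularity theory for the div-curl system on smooth domains yields $\H\in C^{k+1,\a}(\overline\O,\Bbb R^3)$. (One must keep track of the harmonic Neumann field: as in Proposition \ref{Prop4.1}, replacing $\H$ by $\H+\h$ for a suitable $\h\in\Bbb H_N(\O)$ costs nothing since $\Bbb H_N(\O)\subset C^{k+1,\a}(\overline\O,\Bbb R^3)$ on a $C^{k+2,\a}$ domain.)

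Next I would upgrade $u$. Rewrite the equation for $u$ as in \eqref{equ-5}: $-\Delta u=\nabla\cdot[\H\times\s(u)^{-1}\nabla\times\H]$ in $\O$, $u=u^0$ on $\p\O$. With $\H\in C^{k+1,\a}$ and $\s(u)^{-1}\nabla\times\H\in C^{k,\a}$, the vector field $\H\times\s(u)^{-1}\nabla\times\H$ lies in $C^{k,\a}(\overline\O,\Bbb R^3)$, so the right-hand side is the divergence of a $C^{k,\a}$ field; together with $u^0\in C^{k+1,\a}(\overline\O)$ and $\O\in C^{k+2,\a}$, Schauder estimates give $u\in C^{k+1,\a}(\overline\O)$. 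This closes the argument for $k=0$. For $k\geq1$ one iterates: now $u\in C^{k+1,\a}\supset C^{1,\a}$, so $\s\in C^{k,\a}_{\loc}$ gives $\s(u)\in C^{\min(k, k+1),\a}=C^{k,\a}(\overline\O)$ (composition of a $C^{k,\a}_{\loc}$ function with a $C^{k+1,\a}$ function is $C^{k,\a}$), which is exactly what was used above — so in fact the two displayed steps already deliver the full statement directly, the induction on $k$ being only bookkeeping to verify that the composition $\s(u)$ has the regularity the Schauder steps demand at each level.

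The main obstacle is the div-curl step: getting from $\nabla\times\H\in C^{k,\a}$, $\nabla\cdot\H=0$, $\nu\cdot\H=0$ to $\H\in C^{k+1,\a}(\overline\O,\Bbb R^3)$ on a $C^{k+2,\a}$ bounded domain, together with the interaction with the finite-dimensional space $\Bbb H_N(\O)$ of harmonic Neumann fields and the uniqueness normalization. This is the point where the topology of $\O$ and the precise regularity of the boundary enter; I would invoke the $C^{k,\a}$ div-curl regularity results already cited in the paper (e.g. \cite{AS2013}, together with the regularity $\Bbb H_N(\O)\subset C^{k+1,\a}(\overline\O,\Bbb R^3)$ quoted from \cite{DL1990}), rather than re-proving them. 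A secondary technical point is to state cleanly the composition lemma $u\in C^{k+1,\a}(\overline\O)$, $\s\in C^{k,\a}_{\loc}(\Bbb R)$ $\Longrightarrow$ $\s(u)\in C^{k,\a}(\overline\O)$, which is elementary but needed at each stage of the bootstrap.
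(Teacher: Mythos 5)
Your overall route is the same as the paper's: use Theorem \ref{Thm-Lp-reg} to start, write $\s(u)^{-1}\nabla\times\H-\E^0=\nabla\varphi+\h$ as in \eqref{decom-3.1}, apply Schauder theory to the Dirichlet problem \eqref{eq-Dir}, feed $\nabla\times\H=\s(u)(\nabla\varphi+\h+\E^0)$ into div-curl regularity (the paper cites \cite[Proposition 2.1]{BP2007} for exactly this step on \eqref{div-curl-H}), and then upgrade $u$ via its own elliptic equation (the paper uses \eqref{eq-u} with right-hand side $\s(u)|\nabla\varphi+\h+\E^0|^2$, you use the divergence form \eqref{equ-5}; either works). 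However, there is a genuine gap at the base case $k=0$: you assert that $u\in C^{0,\b}(\overline\O)$ for every $\b<1$ together with $\s\in C^{0,\a}_{\loc}$ gives $\s(u)\in C^{0,\b}$ for every $\b$, and then use $\s(u)\in C^{0,\a}$ in the first Schauder application. This is false as stated: composing a $C^{0,\a}$ function with a merely $C^{0,\b}$ (non-Lipschitz) function only yields the exponent $\a\b<\a$ (e.g.\ $\s(s)=|s|^\a$, $u(x)=|x|\log(1/|x|)$), so on the first pass you only get $\s(u)\in C^{0,\gamma}$ for some $\gamma<\a$, hence $\varphi\in C^{1,\gamma}$, $\H\in C^{1,\gamma}$ and $u\in C^{1,\gamma}$ -- not yet the claimed $C^{1,\a}$ regularity, contrary to your remark that ``the two displayed steps already deliver the full statement directly.'' The repair is exactly the extra half-loop the paper performs in its Step 1: once $u\in C^{1,\gamma}$ it is Lipschitz, so $\s(u)\in C^{0,\a}$, and a second pass through \eqref{eq-Dir}, the div-curl system and the $u$-equation recovers the full exponent $\a$. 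For $k\geq1$ your induction is then fine, since $u$ is already Lipschitz at each later stage and the composition $\s(u)\in C^{k,\a}$ is legitimate.

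A secondary point: you claim that simple connectivity forces $\Bbb H_D(\O)=\{\0\}$ and hence $\h=\0$ in \eqref{decom-3.1}. With the paper's conventions this is backwards: simple connectivity gives $\Bbb H_N(\O)=\{\0\}$, while $\Bbb H_D(\O)$ vanishes only when $\O$ has no holes (connected boundary). This does not damage the argument -- the paper simply keeps $\h\in\Bbb H_D(\O)$, which lies in $C^{k+1,\a}(\overline\O,\Bbb R^3)$ on a $C^{k+2,\a}$ domain, so it is harmless in every Schauder and div-curl step -- but you should not discard it, and the normalization by harmonic Neumann fields you mention is vacuous here precisely because $\Bbb H_N(\O)=\{\0\}$ (and in any case one is proving regularity of a given weak solution, not of a modified one).
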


\begin{proof} We give the proof for $k=0, 1$. Then by induction we get the conclusion for $k\geq 2$.
\vskip0.05in
{\it Step 1}.
We start with the case where $k=0$. Suppose $(u^0, \E^0)\in C^{1,\a}(\overline{\O})\times C^{0,\a}(\overline{\O},\Bbb R^3)$ and $\sigma\in C^{0,\a}_{\loc}(\Bbb R)$. By Theorem \ref{Thm-Lp-reg} we obtain $\sigma(u)\in C^{0,\beta}(\overline{\O})$, where $\beta=(\mu-1)\a/2$. Applying elliptic regularity theory to the equation \eqref{eq-Dir}, we conclude that
$\varphi\in C^{1,\beta}(\overline{\O}).$
Noting the regularity of the elements in $\Bbb H_D(\O)$, using the identity
$$\sigma(u)^{-1}|\nabla\times \H|^2=\sigma(u)|\nabla\varphi+\h+\E^0|^2\in C^{0,\beta}(\overline{\O}),
$$
and applying elliptic regularity theory to Laplace equation \eqref{eq-u}, we conclude that $u\in C^{1,\a}(\overline{\O})$. It implies that $\sigma(u)\in C^{0,\a}(\overline{\O})$. Applying elliptic regularity theory to the equation \eqref{eq-Dir} again, we conclude that $\varphi\in C^{1,\a}(\overline{\O})$. Since $\nabla\times \H=\sigma(u)(\nabla\varphi+\h+\E^0)\in C^{0,\a}(\overline{\O},\Bbb R^3)$, using the H\"older regularity of the div-curl system \eqref{div-curl-H} given by \cite[Proposition 2.1]{BP2007}, we obtain $\H\in C^{1,\a}(\overline{\O},\Bbb R^3)$.

\vskip0.05in
{\it Step 2}. Now we consider the case where $k=1$.
Suppose $(u^0, \E^0)\in C^{2,\a}(\overline{\O})\times C^{1,\a}(\overline{\O},\Bbb R^3)$ and $\sigma\in C^{1,\a}_{\loc}(\Bbb R)$. Applying the H\"older regularity of Laplace equation to \eqref{eq-u} we derive $u\in C^{2,\a}(\overline{\O})$. Using \eqref{eq-Dir} we obtain $\varphi\in C^{2,\a}(\overline{\O})$. Since $\sigma(u)\nabla\varphi\in C^{1,\a}(\overline{\O})$, applying \cite[Proposition 2.1]{BP2007} to \eqref{div-curl-H} we have $\H\in C^{2,\a}(\overline{\O},\Bbb R^3)$.
\end{proof}
Here we mention that the local Schauder theory has been given by Kang and Kim, see \cite[Theorem 3.2 and Remark 3.3]{KK2002} or \cite[Remark 5.10]{KK2011}.

\vskip0.1in

\section{Uniqueness under Small Boundary Data}\label{sec5}

In this section we establish uniqueness results under small boundary data. We assume that the function $\sigma$ satisfies \eqref{Lip-L}.
Let $S(n,p,\O)$ be the best constant for Sobolev inequality
$$
S(n,p,\O)\|v\|_{L^{2p/(p-2)}(\O)}\leq \|\nabla v\|_{L^2(\O)}\text{ for any $v\in H_0^1(\O)$.}
$$
It is well-known that if $p=n>2$, then $S(n,p,\O)$ does not depend on $\O$. So we denote $S(3,3,\O)$ by $S(3)$.

In order to prove the uniqueness result for \eqref{PZ0}, we establish the following lemma, which is similar to \cite[Theorem 5]{HRS1993}.

\begin{Lem}\label{unique}
Let $\O$ be a bounded domain in $\Bbb R^3$, and $\sigma$ satisfy \eqref{cond-rho} and \eqref{Lip-L}.
Let $\kappa$ be a positive constant satisfying
$$\kappa<\frac{S(3)\sigma_1}{\sqrt{(2\sigma_2/\sigma_1+1)L}}.
$$
Then \eqref{PZ0} has at most one weak solution lying in the following set
$$\left\{(\u,\H)\in H^1(\O)\times [H_0(\divg0,\O)\cap H(\curl,\O)\cap\Bbb H_N(\O)^\perp]:\|\nabla\times\H \|_{L^3(\O)}\leq \kappa\right\}.$$
\end{Lem}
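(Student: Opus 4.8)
Suppose $(u_1,\H_1)$ and $(u_2,\H_2)$ are two weak solutions in the indicated set. Set $w=u_1-u_2$ and $\G=\H_1-\H_2$. The plan is to test the two equations for the difference against suitable functions, estimate each term using the smallness of $\|\nabla\times\H_i\|_{L^3(\O)}$ together with the Lipschitz bound \eqref{Lip-L}, and close an inequality of the form $(\text{good quantity})\leq (\text{small constant})\cdot(\text{good quantity})$, forcing the difference to vanish. Concretely, from the first equations,
\[
\nabla\times[\sigma(u_1)^{-1}\nabla\times\G]=\nabla\times[(\sigma(u_2)^{-1}-\sigma(u_1)^{-1})\nabla\times\H_2]\q\text{in }\O,
\]
with $\nabla\cdot\G=0$, $\nu\cdot\G=0$ on $\p\O$, and the matching tangential condition for $\sigma(u_1)^{-1}\nabla\times\G$. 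Testing with $\G\in H_0(\divg0,\O)\cap H(\curl,\O)\cap\Bbb H_N(\O)^\perp$ (legitimate since the right side is a curl and the natural boundary term cancels) gives
\[
\int_\O\sigma(u_1)^{-1}|\nabla\times\G|^2\,dx=\int_\O(\sigma(u_2)^{-1}-\sigma(u_1)^{-1})(\nabla\times\H_2)\cdot(\nabla\times\G)\,dx,
\]
whence by \eqref{cond-rho}, \eqref{Lip-L} and H\"older (with exponents $3,6,2$ applied to $|w|$, $|\nabla\times\H_2|$, $|\nabla\times\G|$),
\[
\frac{1}{\sigma_2}\|\nabla\times\G\|_{L^2(\O)}^2\leq \frac{L}{\sigma_1^2}\|w\|_{L^6(\O)}\,\|\nabla\times\H_2\|_{L^3(\O)}\,\|\nabla\times\G\|_{L^2(\O)},
\]
so that $\|\nabla\times\G\|_{L^2(\O)}\leq \dfrac{L\sigma_2}{\sigma_1^2}\,\kappa\,\|w\|_{L^6(\O)}$, using the hypothesis $\|\nabla\times\H_2\|_{L^3(\O)}\leq\kappa$ and the Sobolev embedding $\|w\|_{L^6(\O)}\le C\|\nabla w\|_{L^2(\O)}$.

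Next I would subtract the $u$-equations. Writing $\sigma(u_i)^{-1}|\nabla\times\H_i|^2$ in the symmetric form used in the paper, $-\Delta w=\nabla\cdot[\sigma(u_1)^{-1}\nabla\times\H_1-\sigma(u_2)^{-1}\nabla\times\H_2]\wedge(\cdot)$-type manipulation; more directly, since $w\in H_0^1(\O)$ one tests the weak form of $-\Delta w = \sigma(u_1)^{-1}|\nabla\times\H_1|^2-\sigma(u_2)^{-1}|\nabla\times\H_2|^2$ against $w$ itself (this requires $w\in L^\infty$, which holds by the regularity theorem, or one truncates). The right side splits as
\[
\sigma(u_1)^{-1}(\nabla\times\G)\cdot(\nabla\times(\H_1+\H_2))\;+\;(\sigma(u_1)^{-1}-\sigma(u_2)^{-1})|\nabla\times\H_2|^2,
\]
and after integrating against $w$ and using H\"older with exponents $(6,3,2)$ on the first piece and $(\infty,3,3)$ (or $(6,3,2)$ after moving a $w$) on the second, one bounds $\int_\O \nabla w\cdot\nabla w$ by a constant times $\kappa\,\|\nabla w\|_{L^2(\O)}\big(\|\nabla\times\G\|_{L^2(\O)}+\|\nabla w\|_{L^2(\O)}\cdot(\text{another }\kappa)\big)$. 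The key is that \emph{every} term on the right carries at least one factor $\|\nabla\times\H_i\|_{L^3(\O)}\le\kappa$, and the Sobolev constant $S(3)$ governing $\|w\|_{L^6}\le S(3)^{-1}\|\nabla w\|_{L^2}$ is dimension-only, so the combined constant is exactly the one in the stated bound on $\kappa$: tracking it carefully yields
\[
\|\nabla w\|_{L^2(\O)}\le \frac{\sqrt{(2\sigma_2/\sigma_1+1)L}}{S(3)\sigma_1}\,\kappa\,\|\nabla w\|_{L^2(\O)}
\]
together with the earlier control of $\|\nabla\times\G\|_{L^2}$ by $\|\nabla w\|_{L^2}$. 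Since the strict inequality on $\kappa$ makes the prefactor $<1$, we conclude $\nabla w=0$, hence $w=0$ (zero boundary data), and then $\nabla\times\G=0$; as $\G\in H_0(\divg0,\O)\cap H(\curl,\O)\cap\Bbb H_N(\O)^\perp$ with $\nabla\times\G=0$, the Poincar\'e-type inequality on that space forces $\G=0$.

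The main obstacle is bookkeeping the constants so that the two estimates combine into \emph{precisely} $\sqrt{(2\sigma_2/\sigma_1+1)L}/(S(3)\sigma_1)$: one must feed the bound $\|\nabla\times\G\|_{L^2}\lesssim \kappa\|\nabla w\|_{L^2}$ back into the $w$-estimate, and the factor $2\sigma_2/\sigma_1+1$ should emerge from summing $\|\nabla\times(\H_1+\H_2)\|$-type contributions (two solutions, via $\sigma(u_1)^{-1}$ and $\sigma(u_2)^{-1}$) plus the pure $\sigma$-difference term — this is the same combination that appeared in \eqref{converg2}. A secondary technical point is justifying $w$ as a test function for its own equation despite the $L^1$ right-hand side; this is handled by the $C^{0,\gamma}(\overline\O)$ regularity of $u_i$ from Theorem~\ref{Thm-Lp-reg} (so $w\in L^\infty$), or alternatively by a Stampacchia truncation argument. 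Everything else is routine H\"older and Sobolev estimation.
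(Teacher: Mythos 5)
Your proposal is correct and follows essentially the same route as the paper's proof: subtract the two solutions, test the $\H$-difference equation with $\B=\H_1-\H_2$ to get $\|\nabla\times\B\|_{L^2}\le \sigma_2\sigma_1^{-2}L\,\kappa\,\|v\|_{L^6}$, test the $u$-difference equation with $v=u_1-u_2$ (justified by truncation, since the right-hand side lies in $L^{6/5}$ thanks to the $L^3$ bound on $\nabla\times\H_i$), split the quadratic difference exactly as you do, and close with Sobolev/H\"older to get the absorption constant $\sigma_1^{-2}(2\sigma_2/\sigma_1+1)L\kappa^2<S(3)^2$, then conclude $\B=\0$ from $\nabla\times\B=\0$ and $\B\in H_0(\divg0,\O)\cap\Bbb H_N(\O)^\perp$. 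Only note that inside this lemma you should rely on the truncation alternative rather than Theorem \ref{Thm-Lp-reg} (whose hypotheses on $\O$ and the data are not assumed here); otherwise the argument and constants match the paper.
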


\begin{proof}
Let $(u_1,\H_1)$ and $(u_2,\H_2)$ be two weak solutions in the above set. Set $v=u_1-u_2$ and $\B=\H_1-\H_2$. Then $v$ and $\B$ satisfy
\begin{equation*}
-\Delta v=\sigma(u_1)^{-1}|\nabla\times\H_1|^2-\sigma(u_2)^{-1}|\nabla\times\H_2|^2\q \text{in } \O,\q
 v=0\q \text{ on } \p\O,
\end{equation*}
and
\begin{equation*}
\left\{\aligned
&\nabla\times[\sigma(u_1)^{-1}\nabla\times \B]=\nabla\times[(\sigma(u_2)^{-1}-\sigma(u_1)^{-1}) \nabla\times\H_2],\q \nabla\cdot \B=0\q &\text{in } \O,\\
&\nu\cdot\B=0,\q \nu\times[\sigma(u_1)^{-1}\nabla\times \B]=\nu\times[(\sigma(u_2)^{-1}-\sigma(u_1)^{-1}) \nabla\times\H_2] &\text{ on } \p\O.
\endaligned\right.
\end{equation*}
Then we have the equalities
\begin{equation}\label{v-eq}
\int_\O|\nabla v|^2dx=\int_\O\{\sigma(u_1)^{-1}|\nabla\times\H_1|^2-\sigma(u_2)^{-1}|\nabla\times\H_2|^2\}v dx,
\end{equation}
\begin{equation}\label{B-eq}
\int_\O\sigma(u_1)^{-1}|\nabla\times \B|^2dx=\int_\O(\sigma(u_2)^{-1}-\sigma(u_1)^{-1}) \nabla\times\H_2\cdot \nabla\times \B dx.
\end{equation}
From \eqref{B-eq} and using the conditions \eqref{cond-rho} and \eqref{Lip-L}, we have
\begin{equation}\label{B-v}
\aligned
&\sigma_2^{-1}\|\nabla\times \B\|_{L^2(\O)}\leq \sigma_1^{-2}L\|v\nabla\times\H_2\|_{L^2(\O)}\\
\leq& \sigma_1^{-2}L\|v\|_{L^6(\O)}\|\nabla\times\H_2\|_{L^3(\O)}\leq \sigma_1^{-2}L\kappa\|v\|_{L^6(\O)}.
\endaligned
\end{equation}
Using \eqref{v-eq}, by Sobolev inequality and H\"{o}lder inequality, it follows that
\begin{equation}
\aligned
S(3)^2\|v\|^2_{L^6(\O)}\leq&\|\nabla v\|_{L^2(\O)}^2\\
\leq& \left\|\sigma(u_1)^{-1}|\nabla\times\H_1|^2-\sigma(u_2)^{-1}|\nabla\times\H_2|^2\right\|_{L^{6/5}(\O)}\|v\|_{L^6(\O)}\\
\leq& \|\sigma(u_1)^{-1}(\nabla\times \B)\cdot[\nabla\times (\H_1+\H_2)]\|_{L^{6/5}(\O)}\|v\|_{L^6(\O)}\\
&+\left\|(\sigma(u_1)^{-1}-\sigma(u_2)^{-1})|\nabla\times\H_2|^2\right\|_{L^{6/5}(\O)}\|v\|_{L^6(\O)}.
\endaligned
\end{equation}
By H\"{o}lder inequality and the conditions \eqref{cond-rho}, \eqref{Lip-L}, we get
\begin{equation}
\|\sigma(u_1)^{-1}\nabla\times \B\cdot\nabla\times (\H_1+\H_2)\|_{L^{6/5}(\O)}\leq 2\sigma_1^{-1}\kappa\|\nabla\times \B\|_{L^2(\O)},
\end{equation}
and
\begin{equation}
\aligned
&\left\|(\sigma(u_1)^{-1}-\sigma(u_2)^{-1})|\nabla\times\H_2|^2\right\|_{L^{6/5}(\O)}\leq \sigma_1^{-2}L\left\|v|\nabla\times\H_2|^2\right\|_{L^{6/5}(\O)}\\
\leq& \sigma_1^{-2}L\|v\|_{L^6(\O)}\left\||\nabla\times\H_2|^2\right\|_{L^{3/2}(\O)}
\leq \sigma_1^{-2}L\kappa^2\|v\|_{L^6(\O)}.
\endaligned
\end{equation}
Combining the above four inequalities, we obtain
$$S(3)^2\|v\|^2_{L^6(\O)}\leq \sigma_1^{-2}(\frac{2\sigma_2}{\sigma_1}+1)L\kappa^2\|v\|^2_{L^6(\O)}.$$
Hence, if $\sigma_1^{-2}(2\sigma_2/\sigma_1+1)L\kappa^2<S(3)^2$, then $v=0$. Consequently, \eqref{B-v} implies that $\nabla\times \B=\0$ in $\O$. Since $\nabla\cdot \B=0$ in $\O$, $\nu\cdot\B=0$ on $\p\O$ and $\B\in \Bbb H_N(\O)^\perp$, we derive $\B=\0$.
\end{proof}

Now we prove uniqueness of weak solutions of \eqref{PZ0} under small boundary data condition.

\begin{Thm}\label{Thm-uniq}
Let $\O$ be a bounded domain in $\Bbb R^3$ with a $C^2$ boundary and $\sigma$ satisfy \eqref{cond-rho} and \eqref{Lip-L}. Assume $(u^0,\E^0)$ satisfies \eqref{u0H0w1q}. Then there exists $\eta>0$ such that if \eqref{small-E0} holds for this $\eta$, then  \eqref{PZ0} has a unique weak solution in the space $H^1(\O)\times [H_0(\divg0,\O)\cap H(\curl,\O)\cap\Bbb H_N(\O)^\perp]$.
\end{Thm}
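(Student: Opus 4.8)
The plan is to derive the theorem by combining the a priori bound \eqref{curl-key} of Theorem \ref{Thm-Lp-reg}(iii) --- made \emph{uniform over all weak solutions} by the observation in Remark \ref{Rem-key} --- with the uniqueness criterion of Lemma \ref{unique}. Existence of a weak solution in the space $H^1(\O)\times[H_0(\divg0,\O)\cap H(\curl,\O)\cap\Bbb H_N(\O)^\perp]$ is already furnished by Theorem \ref{PZ1}, whose fixed point construction indeed produces $\H\in\Bbb H_N(\O)^\perp$; so the only thing left to prove is uniqueness.

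First I would fix once and for all a constant $\kappa>0$ with
$$\kappa<\frac{S(3)\sigma_1}{\sqrt{(2\sigma_2/\sigma_1+1)L}},$$
so that Lemma \ref{unique} applies: \eqref{PZ0} has at most one weak solution $(\u,\H)$ in the given space with $\|\nabla\times\H\|_{L^3(\O)}\le\kappa$. It therefore suffices to choose $\eta>0$ so small that \emph{every} weak solution of \eqref{PZ0} in that space satisfies $\|\nabla\times\H\|_{L^3(\O)}\le\kappa$ whenever $\|\E^0\|_{L^q(\O)}<\eta$. To this end I would first require $\eta\le 1$, so that $\|\E^0\|_{L^q(\O)}\le 1$; then, by Theorem \ref{Thm-Lp-reg}(iii) and Remark \ref{Rem-key}, any weak solution $(u,\H)$ obeys $\|\nabla\times\H\|_{L^q(\O)}\le C_3\|\E^0\|_{L^q(\O)}$ with a constant $C_3$ depending only on $\O,q,\sigma_1,\sigma_2,L$ and $\|u^0\|_{W^{1,q}(\O)}$, hence independent of the particular solution. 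Since $q>3$ and $\O$ is bounded, Hölder's inequality gives $\|\nabla\times\H\|_{L^3(\O)}\le|\O|^{1/3-1/q}C_3\|\E^0\|_{L^q(\O)}$, and it now suffices to take
$$\eta=\min\Big\{1,\ \frac{\kappa}{|\O|^{1/3-1/q}\,C_3}\Big\}.$$
With this $\eta$, condition \eqref{small-E0} forces $\|\nabla\times\H\|_{L^3(\O)}<\kappa$ for every weak solution in the prescribed space, and Lemma \ref{unique} yields uniqueness.

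The one genuinely delicate point is exactly the one flagged in Remark \ref{Rem-key}: the constant in \eqref{curl-key} must not depend on the weak solution, otherwise the threshold $\eta$ could not be chosen uniformly and the reduction to Lemma \ref{unique} would be circular. This is secured by restricting to $\|\E^0\|_{L^q(\O)}\le 1$, which freezes $C_3$ in terms of the fixed data $(\O,q,\sigma_1,\sigma_2,L,\|u^0\|_{W^{1,q}(\O)})$ alone. Everything else --- selecting $\kappa$, interpolating between $L^q$ and $L^3$ on the bounded domain, and the final choice of $\eta$ --- is routine.
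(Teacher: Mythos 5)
Your proposal is correct and follows essentially the same route as the paper: existence from Theorem \ref{PZ1} with $\H\in\Bbb H_N(\O)^\perp$, the solution-independent bound \eqref{curl-key} via Remark \ref{Rem-key} after restricting to $\|\E^0\|_{L^q(\O)}\le 1$, the $L^q$-to-$L^3$ estimate on the bounded domain, and a choice of $\eta$ small enough to place every weak solution inside the uniqueness class of Lemma \ref{unique}. No gaps.
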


\begin{proof}
Theorem \ref{PZ1} proves the existence of at least one weak solution $(\tilde{u},\widetilde{\H})\in H^1(\O)\times [H_0(\divg0,\O)\cap H(\curl,\O)]$. Furthermore, we can choose $\widetilde{\H}\in \Bbb H_N(\O)^\perp$. And then we show the uniqueness.

We first assume \eqref{small-E0} holds with $\eta=1$. Let $(u,\H)$ be any possible weak solution of \eqref{PZ0}. By Remark \ref{Rem-key}, we conclude that the constant $C_3$ in \eqref{curl-key} depends only on $\O,q,\sigma_1,\sigma_2,L$, $\|u^0\|_{W^{1,q}(\O)}$. Hence
$$
\aligned
\|\nabla\times\H\|_{L^3(\O)}\leq C(\O,q)\|\nabla\times\H\|_{L^q(\O)}
\leq C_*\|\E^0\|_{L^q(\O)},
\endaligned
$$
where $C_*$ depends only on $\O,q,\sigma_1,\sigma_2,L,\|u^0\|_{W^{1,q}(\O)}$, and is independent of the solution. Let
$$\eta=\min \left\{1,\; \frac{S(3)\sigma_1}{2C_*\sqrt{(2\sigma_2/\sigma_1+1)L}}\right\}.
$$
If $\E^0$ satisfies \eqref{small-E0} for this $\eta$, then it holds that
$$\|\nabla\times\H \|_{L^3(\O)}\leq \frac{S(3)\sigma_1}{2\sqrt{(2\sigma_2/\sigma_1+1)L}},$$
 and hence uniqueness follows from Lemma \ref{unique}.
\end{proof}

\v0.1in

\section{Tangential boundary condition}
In this section, we establish existence, regularity and uniqueness of weak solutions of system \eqref{Pan-eq}.

\begin{Def}
We say that $(u,\H)\in H^1(\O)\times [H(\divg0,\O)\cap H(\curl,\O)]$  is a weak solution of \eqref{Pan-eq} if $u=u^0$ and $\nu\times\H=\nu\times\H^0$ on $\p\O$ in the sense of trace, and if it holds that
$$\aligned
&\int_\O\nabla u\cdot\nabla v\, dx=\int_\O\sigma(u)^{-1}|\nabla\times \H|^2v\, dx,\q&\forall v\in H^1_0(\O)\cap L^\infty(\O),\\
&\int_\O\sigma(u)^{-1}\nabla\times \H\cdot\nabla\times\w dx=0,\q &\forall  \w\in H(\divg0,\O)\cap H_0(\curl,\O).
\endaligned
$$
\end{Def}

\begin{Prop}[Existence of weak solutions]\label{PZ}
Let $\O$ be a bounded Lipschitz domain in $\Bbb R^3$. Assume the function $\sigma$ satisfies  \eqref{cond-rho}, $u^0\in H^1(\O)$ and $\H^0\in W^{1,q}(\O,\Bbb R^3)$ for some $q>3$.
Then \eqref{Pan-eq} has a weak solution $(u,\H)\in H^1(\O)\times [H(\divg0,\O)\cap H(\curl,\O)]$.
\end{Prop}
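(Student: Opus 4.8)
The plan is to mimic the proof of Theorem \ref{PZ1}, adapting the fixed point argument to the tangential boundary condition for $\H$. First I would reduce to the case of homogeneous tangential data by lifting: since $\H^0\in W^{1,q}(\O,\Bbb R^3)$ with $q>3$, the field $\H-\H^0$ has vanishing tangential trace, and the relevant linear div-curl problem becomes one for an element of $H(\divg0,\O)\cap H_0(\curl,\O)$ modulo harmonic Dirichlet fields. Concretely, for a fixed $w\in L^2(\O)$ I would first solve, via Lax-Milgram on the space $H(\divg 0,\O)\cap H_0(\curl,\O)\cap\Bbb H_D(\O)^\perp$, the linear problem
\begin{equation*}
\left\{\aligned
&\nabla\times[\sigma(w)^{-1}\nabla\times \H_w]=\0,\q \nabla\cdot\H_w=0\q&\text{in }\O,\\
&\nu\times\H_w=\nu\times\H^0\q&\text{on }\p\O,
\endaligned\right.
\end{equation*}
using the Poincar\'e-type inequality $\|\bold v\|_{L^2(\O)}\leq C(\O)\|\nabla\times\bold v\|_{L^2(\O)}$ valid on $H(\divg0,\O)\cap H_0(\curl,\O)\cap\Bbb H_D(\O)^\perp$, together with the lifting by $\H^0$ to handle the inhomogeneous trace. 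Testing with $\H_w-\H^0$ and using \eqref{cond-rho} gives the uniform estimate $\|\nabla\times\H_w\|_{L^2(\O)}\leq C\|\nabla\times\H^0\|_{L^2(\O)}$.

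Next I would decompose the curl. Since $\nabla\times[\sigma(w)^{-1}\nabla\times\H_w]=\0$ in $\O$, Lemma \ref{curl0}(i) gives $\sigma(w)^{-1}\nabla\times\H_w=\nabla\varphi_w+\h_w$ with $\varphi_w\in\dot H^1(\O)$ and $\h_w\in\Bbb H_N(\O)$. (Note the roles of $\Bbb H_N$ and $\Bbb H_D$ are swapped compared to \eqref{PZ0}, because here it is $\nu\times(\sigma(w)^{-1}\nabla\times\H_w)$ that is \emph{not} controlled on the boundary, while in \eqref{PZ0} it was prescribed; correspondingly $\varphi_w$ now solves a Neumann-type problem.) The divergence-free condition on $\nabla\times\H_w$ forces $\varphi_w$ to satisfy $\nabla\cdot[\sigma(w)(\nabla\varphi_w+\h_w)]=0$ in $\O$ with the natural Neumann condition $\nu\cdot[\sigma(w)(\nabla\varphi_w+\h_w)]=\nu\cdot(\nabla\times\H_w)$ on $\p\O$; the right-hand side here is the normal trace of a curl and thus is handled by integration by parts against test functions. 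One must be slightly careful: unlike the Dirichlet case, one cannot invoke \cite[Theorem 8.16]{GT2001} for an $L^\infty$ bound on $\varphi_w$ for a pure Neumann problem directly; instead I would write the source in divergence form and still obtain $\varphi_w\in C^{0,\alpha}(\overline\O)$ with a bound $\|\varphi_w\|_{C^{0,\alpha}(\overline\O)}\leq C\|\nabla\times\H^0\|_{L^q(\O)}$ by the De Giorgi-Nash estimate for the Neumann problem (or by first passing to $W^{1,q}$ via Meyers/Campanato and then Morrey embedding), since $\h_w\in\Bbb H_N(\O)\subset C^0(\overline\O,\Bbb R^3)$ on $C^2$ domains and contributes an $L^q$ term. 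Then, as in Step 3 of Lemma \ref{lemma3.3}, I would write
$$\sigma(w)^{-1}|\nabla\times\H_w|^2=\nabla\cdot(\varphi_w\nabla\times\H_w)+\h_w\cdot\nabla\times\H_w\in H^{-1}(\O),$$
solve the Dirichlet problem $-\Delta u_w=\sigma(w)^{-1}|\nabla\times\H_w|^2$, $u_w=u^0$ on $\p\O$ by Lax-Milgram, and obtain a uniform-in-$w$ bound $\|u_w\|_{L^2(\O)}\leq K$ for an explicit $K=K(\O,q,\sigma_1,\sigma_2,\|\nabla\times\H^0\|_{L^q},\|\nabla\times\H^0\|_{L^2},\|u^0\|_{H^1})$.

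Finally I would set $\mathrm{T}(w)=u_w$ on the closed convex set $D=\{w\in L^2(\O):\|w\|_{L^2(\O)}\leq K\}$, and prove $\mathrm{T}:L^2(\O)\to H^1(\O)$ is continuous exactly as in Theorem \ref{PZ1}: if $w_k\to w_0$ in $L^2(\O)$, then $\nabla\times\H_{w_k}\to\nabla\times\H_{w_0}$ in $L^2(\O)$ by the $L^2$ estimate for the difference equation together with dominated convergence applied to $(\sigma(w_0)^{-1}-\sigma(w_k)^{-1})\nabla\times\H_{w_0}$; hence $\h_{w_k}\to\h_{w_0}$ and $\nabla\varphi_{w_k}\to\nabla\varphi_{w_0}$ in $L^2$, and the uniform $C^{0,\alpha}$ bound on $\varphi_{w_k}$ plus Arzela-Ascoli upgrades this to convergence in $C^0(\overline\O)$; then the $H^1$ estimate for the Laplace equation governing $u_{w_k}-u_{w_0}$, with the right-hand side re-collected as in Theorem \ref{PZ1}, yields $u_{w_k}\to u_{w_0}$ in $H^1(\O)$. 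Compactness of $H^1(\O)\hookrightarrow L^2(\O)$ makes $\mathrm{T}$ compact on $D$, so Schauder's fixed point theorem gives $u=\mathrm{T}(u)\in H^1(\O)$, and $(u,\H_u)$ is the desired weak solution. The main obstacle I expect is the $L^\infty$/Hölder bound on $\varphi_w$: because the potential problem is now of Neumann type rather than Dirichlet type, one cannot cite \cite[Theorem 8.16]{GT2001} verbatim, and some care is needed to produce a bound that is uniform in $w$ and in terms of $\|\nabla\times\H^0\|_{L^q(\O)}$ only — everything else is a direct transcription of the arguments for \eqref{PZ0} with $\Bbb H_D$ and $\Bbb H_N$ interchanged.
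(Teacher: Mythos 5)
Your proposal is correct and follows essentially the same route as the paper: lifting of the tangential data $\H^0$, Lax--Milgram for $\H_w$, the decomposition $\sigma(w)^{-1}\nabla\times\H_w=\nabla\varphi_w+\h_w$ with $\h_w\in\Bbb H_N(\O)$, the Neumann problem for $\varphi_w$ handled by the De Giorgi--Nash estimate (the paper cites Nittka's Neumann version), the divergence-form rewriting of the Joule term, and Schauder's fixed point exactly as in Theorem \ref{PZ1}. The one point to adjust is your appeal to $\Bbb H_N(\O)\subset C^0(\overline{\O},\Bbb R^3)$, which needs more than a Lipschitz boundary; on Lipschitz domains one instead gets $\h_w\in L^{q_1}(\O,\Bbb R^3)$ for some $q_1>3$ from Lemma \ref{Hsp}, exactly as in Step 2 of Lemma \ref{lemma3.3}, and this suffices for the uniform H\"older bound on $\varphi_w$.
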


\begin{proof}
{\it Step 1}. For any given $w\in L^2(\O)$, let $\H_w\in H(\divg0,\O)\cap H(\curl,\O)$ be a weak solution of the system

\begin{equation}\label{eq3.28}
\left\{\aligned
&\nabla\times[\sigma(w)^{-1}\nabla\times \H_w]=\0,\q \nabla\cdot \H_w=0\q &\text{in } \O,\\
&\nu\times\H_w= \nu\times\H^0\q  &\text{on } \p\O.
\endaligned\right.
\end{equation}
Let $\phi\in H_0^1(\O)$ be such that
$\Delta\phi=\divg \H^0$ in $\O$ and $\phi=0$ on $\p\O$. Taking $\H_w-(\H^0-\nabla\phi)$ as a test function for \eqref{eq3.28}, we have the following $L^2$ estimate:
\eq\label{est3.29}
\|\nabla\times\H_w\|_{L^2(\O)}\leq {\sigma_2\over \sigma_1}\|\nabla\times\H^0\|_{L^2(\O)}.
\eeq

\vskip0.05in
{\it Step 2}.
Since $\nabla\times[\sigma(w)^{-1}\nabla\times\H_w]=\0\text{ in }\O,$
by Lemma \ref{curl0}, there exist $\varphi_w\in \dot{H}^1(\O)$ and $\h_w\in \Bbb H_N(\O)$ such that
\eq\label{decom-3.30}
\sigma(w)^{-1}\nabla\times\H_w=\nabla\varphi_w+\h_w.
\eeq
Let $\bold v_1,\cdots,\bold v_N$ be an orthonormal basis of $\Bbb H_N(\O)$ with respect to the $L^2$-norm. We can write
$$
\h_w=\sum_{j=1}^Nc_j\bold v_j,\q
c_j=\int_\O\bold v_j\cdot[\sigma(w)^{-1}\nabla\times\H_w]dx,\q j=1,\cdots, N.
$$
From this and \eqref{est3.29} we get
\eq\label{est-cj}
|c_j|\leq C_1\|\nabla\times\H^0\|_{L^2(\O)},\q j=1,\cdots, N,
\eeq
where $C_1$ depends on $\O, \sigma_1,\sigma_2$.

It is not difficult to see that $\varphi_w$ satisfies the following equation
\begin{equation}\label{eq3.30}
\left\{\aligned
&\nabla\cdot[\sigma(w)(\nabla\varphi_w+\h_w)]=0 \q & \text{in } \O,\\
&\nu\cdot[\sigma(w)(\nabla\varphi_w+\h_w)]=\nu\cdot\nabla\times\H^0\q &\text{on } \p\O.
\endaligned\right.
\end{equation}
Applying the De Giorgi-Nash estimate for elliptic equations with Neumann boundary condition (see \cite[Proposition 3.6]{Nittka2011}) to \eqref{eq3.30}, we see that there exist $\alpha=\alpha(\O,q,\sigma_1,\sigma_2)\in (0,1)$ and $C_2=C_2(\O,q,\sigma_1,\sigma_2)$ such that
\eq\label{est3.31}
\|\varphi_w\|_{C^{0,\alpha}(\overline{\O})}\leq C_2\|\nabla\times\H^0\|_{L^q(\O)}.
\eeq
\vskip0.05in

{\it Step 3}.
For $w$ and  $\H_w$ given above, we look for a solution $u_w$ of \eqref{equation-u}.
Using the decomposition \eqref{decom-3.30} we have
$$
\aligned
\sigma(w)^{-1}|\nabla\times\H_w|^2&=(\nabla\varphi_w+\h_w)\cdot\nabla\times\H_w\\
=&\nabla\cdot(\varphi_w\,\nabla\times\H_w)+\h_w\cdot\nabla\times\H_w\in H^{-1}(\O).
\endaligned
$$
So we can use the Lax-Milgram theorem to conclude that \eqref{equation-u} has a unique weak solution $u_w\in H^1(\O)$. Moreover, we have the following estimate
\eq\label{est3.34}
\aligned
\|u_w\|_{L^2(\O)}&\leq C_3\{\|\nabla\times\H^0\|_{L^q(\O)}\|\nabla\times\H^0\|_{L^2(\O)}+\|u^0\|_{H^1(\O)}\},
\endaligned
\eeq
where $C_3$ depends on $\O,q,\sigma_1,\sigma_2$.
\vskip0.05in

{\it Step 4}.
For any given function $w\in L^2(\O)$, let $u_w$ be the solution of \eqref{equation-u}, where $\H_w$ is the solution of \eqref{eq3.28} associated with $w$.
We define $\mathrm{T}(w)=u_w$. Then $\mathrm{T}$  is a map from $L^2(\O)$ to $L^2(\O)$. Having the estimates \eqref{est3.31} and \eqref{est3.34}, we can apply the Schauder's fixed point theorem to get a solution of \eqref{Pan-eq}. The details are similar to the counterpart in the proof of Theorem \ref{PZ1}, and are hence omitted.
\end{proof}

\begin{Prop}[Regularity of weak solutions]
Assume that $\O$ is a bounded $C^2$ domain in $\Bbb R^3$ and $\sigma$ satisfies \eqref{cond-rho}. Let $(u,\H)$ be an $H^1(\O)\times [H(\divg0,\O)\cap H(\curl,\O)]$ weak solution of \eqref{Pan-eq}.  Then we have the following conclusions.
\begin{itemize}
\item[(i)] There exists $p_0>2$ such that if $(u^0,\H^0)\in W^{2,p/2}(\O)\times W^{1,p}(\O,\Bbb R^3)$ with $2<p<p_0$, then $(u,\H)\in W^{2,p/2}(\O)\times W^{1,p}(\O,\Bbb R^3)$.
\item[(ii)] Assume $(u^0,\H^0)\in W^{1,q}(\O)\times W^{1,q}(\O,\Bbb R^3)$ for some $q>3$. Then there exists $\delta=\delta(\O,\sigma_1,\sigma_2)\in (0,1)$ such that for any $1<\mu<1+2\min\{\delta,1-3/q\}$, we have $u\in C^{0,(\mu-1)/2}(\overline{\O})$ with the estimate
\begin{equation*}
\|u\|_{C^{0,(\mu-1)/2}(\overline{\O})}\leq C(\O,\mu,q,\sigma_1,\sigma_2)(\|\nabla\times\H^0\|_{L^q(\O)}^2+\|u^0\|_{W^{1,q}(\O)}),
\end{equation*}
Moreover, we have
$(u,\H)\in W^{1,q}(\O)\times W^{1,q}(\O,\Bbb R^3).$
\item[(iii)] Assume  $\p\O$ is connected $($hence $\Bbb H_D(\O)=\{\0\})$. Let $\a\in(0,1)$.
 \begin{itemize}
 \item[(a)] If $\O$ is of class $C^{2,\a}$, $\sigma\in C^{0,\a}_{\loc}(\Bbb R)$, and $(u^0,\H^0)\in C^{1,\a}(\overline{\O})\times C^{1,\a}(\overline{\O},\Bbb R^3)$, then $(u, \H)\in C^{1,\a}(\overline{\O})\times C^{1,\a}(\overline{\O},\Bbb R^3)$.
 \item[(b)] If $\O$ is of class $C^{3,\a}$, $\sigma\in C^{1,\a}_{\loc}(\Bbb R)$, and $(u^0,\H^0)\in C^{2,\a}(\overline{\O})\times C^{2,\a}(\overline{\O},\Bbb R^3)$, then $(u, \H)\in C^{2,\a}(\overline{\O})\times C^{2,\a}(\overline{\O},\Bbb R^3)$.
 \end{itemize}
\end{itemize}
\end{Prop}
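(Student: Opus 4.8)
The plan is to follow the scheme of Proposition \ref{Prop4.1} and Theorems \ref{Thm-Lp-reg} and \ref{Thm-Holder}, the only structural change being that the Dirichlet problem for the scalar potential is replaced by the Neumann problem \eqref{eq3.30}, and the condition $\nu\cdot\H=0$ on $\H$ is replaced by the tangential condition $\nu\times\H=\nu\times\H^0$. Throughout one works with the decomposition \eqref{decom-3.30}, $\sigma(u)^{-1}\nabla\times\H=\nabla\varphi+\h$ with $\varphi\in\dot H^1(\O)$ and $\h\in\Bbb H_N(\O)$, recalling that on a $C^2$ domain $\h\in C^0(\overline{\O},\Bbb R^3)$ with $\|\h\|_{L^\infty(\O)}\le C\|\nabla\times\H^0\|_{L^2(\O)}$. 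The equation \eqref{eq3.30} is written in the divergence form \eqref{BN} with $A=\sigma(u)I$ and $\F=\nabla\times\H^0-\sigma(u)\h$, which is legitimate because $\nabla\cdot(\nabla\times\H^0)=0$, so that the boundary datum $\nu\cdot\nabla\times\H^0$ is absorbed into $\F$, and $\F$ inherits the integrability of $\nabla\times\H^0$. To pass from $\nabla\times\H$ to $\H$ one sets $\G=\H-\H^0$, so that $\nu\times\G=\0$ on $\p\O$, $\nabla\times\G=\nabla\times\H-\nabla\times\H^0$ and $\nabla\cdot\G=-\nabla\cdot\H^0$, and then invokes the $L^p$ (resp.\ H\"older) regularity theory for the div-curl system with vanishing tangential trace. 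For part (i): Meyers' estimate in its Neumann form, applied to \eqref{eq3.30}, produces $p_0=p_0(\O,\sigma_1,\sigma_2)>2$ with $\nabla\varphi\in L^p(\O)$ for $2<p<p_0$, hence $\nabla\times\H=\sigma(u)(\nabla\varphi+\h)\in L^p(\O,\Bbb R^3)$; a duality argument parallel to Step 3 of Proposition \ref{Prop4.1}, based this time on the Helmholtz-Weyl decomposition $L^2(\O,\Bbb R^3)=[\nabla\times H(\curl,\O)]\oplus\nabla H_0^1(\O)\oplus\Bbb H_D(\O)$ and the identities $\int_\O\G\cdot(\nabla\times\w)\,dx=\int_\O(\nabla\times\G)\cdot\w\,dx$ (from $\nu\times\G=\0$) and $\int_\O\G\cdot\nabla\chi\,dx=-\int_\O(\nabla\cdot\G)\chi\,dx$ (from $\chi|_{\p\O}=0$), gives $\G\in L^p(\O,\Bbb R^3)$, whence $\G\in W^{1,p}(\O,\Bbb R^3)$ by the $L^p$ theory for div-curl systems, so $\H\in W^{1,p}(\O,\Bbb R^3)$; finally $\sigma(u)^{-1}|\nabla\times\H|^2\in L^{p/2}(\O)$, and $W^{2,p/2}$ elliptic theory for \eqref{eq-u} with $u^0\in W^{2,p/2}(\O)$ gives $u\in W^{2,p/2}(\O)$.

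For part (ii) I would first run the Campanato scheme. Applying Lemma \ref{M-Campanato3.5} (the Neumann version) to \eqref{eq3.30}, together with the embedding $L^q(\O)\hookrightarrow L^{2,\mu}(\O)$ and the $L^\infty$ bound on $\h$, produces $\delta=\delta(\O,\sigma_1,\sigma_2)\in(0,1)$ such that $\nabla\varphi\in L^{2,\mu}(\O)$ for $1<\mu<1+2\min\{\delta,1-3/q\}$; then Lemma \ref{M-Campanato} gives $\varphi\in L^{2,\mu+2}(\O)\cong C^{0,(\mu-1)/2}(\overline{\O})$, and the multiplier property yields $\nabla\times\H=\sigma(u)(\nabla\varphi+\h)\in L^{2,\mu}(\O,\Bbb R^3)$. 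Using \eqref{decom-3.30} and $\nabla\cdot(\nabla\times\H)=0$ one rewrites the equation for $u$ as $-\Delta u=\nabla\cdot(\varphi\,\nabla\times\H)+\h\cdot\nabla\times\H$ in $\O$, $u=u^0$ on $\p\O$; since $\varphi,\h\in L^\infty(\O)$, $\nabla\times\H\in L^{2,\mu}(\O)$ and $\nabla u^0\in L^q(\O)\subset L^{2,\mu}(\O)$, Lemma \ref{M-Campanato3} gives $\nabla u\in L^{2,\mu}(\O)$ and hence $u\in C^{0,(\mu-1)/2}(\overline{\O})$, the stated estimate being obtained by tracking constants exactly as in Theorem \ref{Thm-Lp-reg}. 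Once $u$ is continuous, $\sigma(u)\in VMO(\O)$, so the $W^{1,q}$ theory for elliptic equations with $VMO$ coefficients (the Neumann analogue of \cite[Theorem 1]{AQ2002}) applied to \eqref{eq3.30} gives $\nabla\varphi\in L^q(\O)$, hence $\nabla\times\H\in L^q(\O,\Bbb R^3)$, and the div-curl argument above (with exponent $q$) gives $\H\in W^{1,q}(\O,\Bbb R^3)$; then, since $\nabla\times[\sigma(u)^{-1}\nabla\times\H]=\0$, one has $\sigma(u)^{-1}|\nabla\times\H|^2=\nabla\cdot(\H\times\sigma(u)^{-1}\nabla\times\H)$ with $\H\times\sigma(u)^{-1}\nabla\times\H\in L^q(\O,\Bbb R^3)$ (using $\H\in C^0(\overline{\O},\Bbb R^3)$ by Morrey's embedding), so the right-hand side lies in $W^{-1,q}(\O)$ and elliptic regularity gives $u\in W^{1,q}(\O)$.

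For part (iii), the hypothesis that $\p\O$ be connected forces $\Bbb H_D(\O)=\{\0\}$, which is precisely the condition under which the Schauder theory for the div-curl system with prescribed tangential trace, \cite[Proposition 2.1]{BP2007}, applies; this plays here the role that simple connectedness plays in Theorem \ref{Thm-Holder}. In case (a), part (ii) gives $u\in C^{0,(\mu-1)/2}(\overline{\O})$, so $\sigma(u)\in C^{0,\beta}(\overline{\O})$ with $\beta=(\mu-1)\alpha/2$; the De Giorgi-Nash (Schauder) estimate for the Neumann problem \eqref{eq3.30} gives $\varphi\in C^{1,\beta}(\overline{\O})$, whence $\sigma(u)^{-1}|\nabla\times\H|^2=\sigma(u)|\nabla\varphi+\h|^2\in C^{0,\beta}(\overline{\O})\subset L^\infty(\O)$, and elliptic regularity for \eqref{eq-u} with $u^0\in C^{1,\alpha}(\overline{\O})$ gives $u\in C^{1,\alpha}(\overline{\O})$; then $\sigma(u)\in C^{1,\alpha}(\overline{\O})$, so \eqref{eq3.30} gives $\varphi\in C^{1,\alpha}(\overline{\O})$, $\nabla\times\H=\sigma(u)(\nabla\varphi+\h)\in C^{0,\alpha}(\overline{\O},\Bbb R^3)$, and applying \cite[Proposition 2.1]{BP2007} to $\G=\H-\H^0$ (whose curl and divergence are in $C^{0,\alpha}$ and which satisfies $\nu\times\G=\0$) gives $\G\in C^{1,\alpha}(\overline{\O},\Bbb R^3)$, i.e.\ $\H\in C^{1,\alpha}(\overline{\O},\Bbb R^3)$. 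In case (b) one bootstraps once more, starting from $u\in C^{1,\alpha}(\overline{\O})$ (provided by (a)) and $\sigma\in C^{1,\alpha}_{\loc}(\Bbb R)$: then $\sigma(u)\in C^{1,\alpha}(\overline{\O})$, so \eqref{eq3.30} gives $\varphi\in C^{2,\alpha}(\overline{\O})$; next $-\Delta u=\sigma(u)|\nabla\varphi+\h|^2\in C^{1,\alpha}(\overline{\O})$ together with $u^0\in C^{2,\alpha}(\overline{\O})$ gives $u\in C^{2,\alpha}(\overline{\O})$; and finally $\nabla\times\H=\sigma(u)(\nabla\varphi+\h)\in C^{1,\alpha}(\overline{\O},\Bbb R^3)$, $\nabla\times\H^0\in C^{1,\alpha}(\overline{\O},\Bbb R^3)$, and \cite[Proposition 2.1]{BP2007} give $\H\in C^{2,\alpha}(\overline{\O},\Bbb R^3)$. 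At each stage one uses that the harmonic Neumann fields $\h$ are as smooth as the domain allows.

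The step requiring the most care is the regularity theory for the div-curl system $\nabla\times\G=\f$, $\nabla\cdot\G=g$, $\nu\times\G=\0$ with $L^p$ or $C^{0,\alpha}$ data: one must verify that subtracting $\H^0$ genuinely reduces the problem to this homogeneous form, that the duality argument, which now requires $\chi\in W_0^{1,r}(\O)$ in the Helmholtz-Weyl decomposition so that the boundary term in $\int_\O\G\cdot\nabla\chi$ vanishes, really controls $\|\G\|_{L^p(\O)}$ by the $L^2$ norm of $\G$ and the $L^p$ norms of its curl and divergence, and that the Schauder estimate without lower-order term used in part (iii) indeed needs $\Bbb H_D(\O)=\{\0\}$. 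Everything else is a transcription of the arguments of Sections \ref{sec3} and \ref{sec4}.
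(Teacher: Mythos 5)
Your proposal is correct and follows essentially the same route as the paper, which itself proves this proposition by transcribing the arguments of Proposition \ref{Prop4.1} and Theorems \ref{Thm-Lp-reg}--\ref{Thm-Holder} to the decomposition \eqref{decom-3.30}, the Neumann problem \eqref{eq3.30} (via Lemma \ref{M-Campanato3.5}, Meyers' estimate in the form of \cite[Theorem 2]{GM1999}, and the rewritten equation $-\Delta u=\nabla\cdot[\varphi\,\nabla\times\H]+\h\cdot\nabla\times\H$), and the div-curl system for $\H-\H^0$ with vanishing tangential trace; the details you supply (the choice of Helmholtz--Weyl decomposition with $\chi\in W_0^{1,r}(\O)$ and $\z\in\Bbb H_D(\O)$ in the duality step, the reduction of \eqref{eq3.30} to the form \eqref{BN} with $\F=\nabla\times\H^0-\sigma(u)\h$) are exactly what the paper leaves implicit. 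The only slip is in (iii)(a), where ``$\sigma(u)\in C^{1,\a}(\overline{\O})$'' should read $\sigma(u)\in C^{0,\a}(\overline{\O})$ (since $\sigma$ is only $C^{0,\a}_{\loc}$ there), which does not affect the ensuing Schauder step.
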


\begin{proof}
Conclusion (i) follows from Lemma \ref{curl0} and Meyers' estimate (\cite[Theorem 2]{GM1999}).

Now we prove (ii). Since $\nabla\times[\sigma(u)^{-1}\nabla\times\H]=\0\text{ in }\O,$
by Lemma \ref{curl0}, there exist $\varphi\in \dot{H}^1(\O)$ and $\h\in \Bbb H_N(\O)$ such that $\sigma(w)^{-1}\nabla\times\H=\nabla\varphi+\h$, where $\varphi$ solves the equation
\begin{equation*}
\left\{\aligned
&\nabla\cdot[\sigma(u)(\nabla\varphi+\h)]=0 \q & \text{in } \O,\\
&\nu\cdot[\sigma(u)(\nabla\varphi+\h)]=\nu\cdot\nabla\times\H^0\q &\text{on } \p\O.
\endaligned\right.
\end{equation*}
Then there exists $\delta=\delta(\O,\sigma_1,\sigma_2)\in (0,1)$ such that for any $1<\mu<1+2\min\{\delta,1-3/q\}$, it holds that
$$\|\varphi\|_{C^{0,(\mu-1)/2}(\overline{\O})}+\|\nabla\varphi\|_{L^{2,\mu}(\O)}\leq C(\O,\mu,q,\sigma_1,\sigma_2)\|\nabla\times H^0\|_{L^q(\O)}.$$
Since
$$
\aligned
\sigma(u)^{-1}|\nabla\times\H|^2&=\nabla\cdot[\varphi\nabla\times\H]+\h\cdot\nabla\times\H,
\endaligned
$$
we can write the equation for $u$ in the following form:
\begin{equation*}
-\Delta u=\nabla\cdot[\varphi\nabla\times\H]+\h\cdot\nabla\times\H \q\text{in } \O,\q
u=u^0\q\text{ on } \p\O.
\end{equation*}
Similarly to the proof of Theorem \ref{Thm-Lp-reg}, we obtain
\begin{equation*}
\|u\|_{C^{0,(\mu-1)/2}(\overline{\O})}\leq C(\|\nabla\times\H^0\|_{L^q(\O)}^2+\|u^0\|_{W^{1,q}(\O)}).
\end{equation*}
The rest of proof is similar to the counterpart for \eqref{PZ0}, and is hence omitted.

\end{proof}

Similarly to Theorem \ref{Thm-uniq}, we also have small boundary data uniqueness for \eqref{Pan-eq}.
\begin{Prop} Let $\O$ be a bounded domain in $\Bbb R^3$ with a $C^2$ boundary, and $\sigma$ satisfy \eqref{cond-rho} and \eqref{Lip-L}.
 Assume $(u^0,\H^0)\in W^{1,q}(\O)\times W^{1,q}(\O,\Bbb R^3)$ for some $q>3$. If $\|\nabla\times\H^0 \|_{L^q(\O)}$ is sufficiently small, then the solution of \eqref{Pan-eq} in the space $H^1(\O)\times [H(\divg0,\O)\cap H(\curl,\O)\cap(\H^0+\Bbb H_D(\O)^\perp)]$ is unique.
\end{Prop}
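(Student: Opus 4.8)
The plan is to reproduce, with the obvious changes of boundary condition, the two-ingredient argument behind Theorem~\ref{Thm-uniq}: first an analogue of Lemma~\ref{unique} giving uniqueness among weak solutions whose curl is small in $L^3$, and then a uniform $L^q$-bound on $\nabla\times\H$ (valid for \emph{every} weak solution in the prescribed space) that forces this smallness once $\|\nabla\times\H^0\|_{L^q(\O)}$ is small. Throughout, $\sigma$ satisfies \eqref{cond-rho} and \eqref{Lip-L}, and we use the regularity proposition just established for \eqref{Pan-eq}, in particular the H\"older continuity of $u$ and the $W^{1,q}$-regularity of $(u,\H)$.

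\textbf{Step 1: a uniqueness lemma for \eqref{Pan-eq}.} I claim that if $\kappa>0$ satisfies $\kappa<S(3)\sigma_1/\sqrt{(2\sigma_2/\sigma_1+1)L}$, then \eqref{Pan-eq} has at most one weak solution $(\u,\H)\in H^1(\O)\times[H(\divg0,\O)\cap H(\curl,\O)\cap(\H^0+\Bbb H_D(\O)^\perp)]$ with $\|\nabla\times\H\|_{L^3(\O)}\le\kappa$. The proof transcribes that of Lemma~\ref{unique}. Given two such solutions $(u_1,\H_1)$, $(u_2,\H_2)$, set $v=u_1-u_2$ and $\B=\H_1-\H_2$. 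Since both $\H_i\in\H^0+\Bbb H_D(\O)^\perp$ and $\nu\times\H_i=\nu\times\H^0$ on $\p\O$, the difference satisfies $\B\in H(\divg0,\O)\cap H_0(\curl,\O)\cap\Bbb H_D(\O)^\perp$, which is exactly the space on which the Poincar\'e-type inequality $\|\B\|_{L^2(\O)}\le C(\O)\|\nabla\times\B\|_{L^2(\O)}$ holds (the compact-embedding result used in Lemma~\ref{lemma3.3}, now with the Dirichlet-type tangential condition). Testing the equation for $\B$ with $\B$ itself---admissible, as $\B\in H(\divg0,\O)\cap H_0(\curl,\O)$---gives the exact analogue of \eqref{B-eq}, hence $\sigma_2^{-1}\|\nabla\times\B\|_{L^2(\O)}\le\sigma_1^{-2}L\kappa\|v\|_{L^6(\O)}$; and testing the equation for $v$ with $v$---admissible since the regularity proposition yields $u_1,u_2\in C^{0,(\mu-1)/2}(\overline{\O})\subset L^\infty(\O)$---and splitting $\sigma(u_1)^{-1}|\nabla\times\H_1|^2-\sigma(u_2)^{-1}|\nabla\times\H_2|^2=\sigma(u_1)^{-1}\nabla\times\B\cdot\nabla\times(\H_1+\H_2)+(\sigma(u_1)^{-1}-\sigma(u_2)^{-1})|\nabla\times\H_2|^2$ reproduces \eqref{v-eq} and the subsequent H\"older--Sobolev chain verbatim, leading to $S(3)^2\|v\|_{L^6(\O)}^2\le\sigma_1^{-2}(2\sigma_2/\sigma_1+1)L\kappa^2\|v\|_{L^6(\O)}^2$. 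The smallness of $\kappa$ forces $v=0$, hence $\nabla\times\B=\0$, and the Poincar\'e inequality gives $\B=\0$.

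\textbf{Step 2: uniform control of $\nabla\times\H$, and conclusion.} As indicated in the proof of the regularity proposition, the argument of Step~4 in the proof of Theorem~\ref{Thm-Lp-reg}, together with Remark~\ref{Rem-key}, applies verbatim to \eqref{Pan-eq}: writing $\sigma(u)^{-1}\nabla\times\H=\nabla\varphi+\h$ with $\h\in\Bbb H_N(\O)$ via Lemma~\ref{curl0}, invoking the H\"older continuity of $u$ (hence of $\sigma(u)$), and using the sharp $L^q$-estimate for the Neumann problem solved by $\varphi$ under \eqref{Lip-L}, one obtains $\|\nabla\times\H\|_{L^q(\O)}\le C\|\nabla\times\H^0\|_{L^q(\O)}$ with $C$ depending only on $\O,q,\sigma_1,\sigma_2,L$ and---after normalizing $\|\nabla\times\H^0\|_{L^q(\O)}\le1$---on $\|u^0\|_{W^{1,q}(\O)}$, in particular independently of the weak solution. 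With the embedding $L^q(\O)\hookrightarrow L^3(\O)$ this gives $\|\nabla\times\H\|_{L^3(\O)}\le C_*\|\nabla\times\H^0\|_{L^q(\O)}$, $C_*=C_*(\O,q,\sigma_1,\sigma_2,L,\|u^0\|_{W^{1,q}(\O)})$. Now, exactly as in Theorem~\ref{Thm-uniq}, put $\eta=\min\{1,\ S(3)\sigma_1/(2C_*\sqrt{(2\sigma_2/\sigma_1+1)L})\}$; if $\|\nabla\times\H^0\|_{L^q(\O)}<\eta$, every weak solution in the prescribed space has $\|\nabla\times\H\|_{L^3(\O)}\le S(3)\sigma_1/(2\sqrt{(2\sigma_2/\sigma_1+1)L})$, strictly below the threshold of Step~1, so Step~1 (with, say, $\kappa=3S(3)\sigma_1/(4\sqrt{(2\sigma_2/\sigma_1+1)L})$) yields uniqueness.

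\textbf{Main obstacle.} The only delicate point is Step~2: ensuring that the $L^q$-bound on $\nabla\times\H$ holds with a constant depending on the data $(u^0,\H^0)$ but not on the particular weak solution. This is precisely where the H\"older estimate for $u$ from the regularity proposition and the Lipschitz hypothesis \eqref{Lip-L} are used, mirroring the passage from Theorem~\ref{Thm-Lp-reg}(iii) to Theorem~\ref{Thm-uniq} through Remark~\ref{Rem-key}; the rest is a routine transcription of Section~5 with $\Bbb H_N(\O)^\perp$ replaced by $\H^0+\Bbb H_D(\O)^\perp$ and the tangential boundary condition for the difference field in place of the normal one.
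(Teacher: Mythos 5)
Your proposal is correct and is exactly the adaptation the paper intends: the paper states this proposition without proof ("Similarly to Theorem \ref{Thm-uniq}"), and your two steps — the transcription of Lemma \ref{unique} with the difference field in $H(\divg0,\O)\cap H_0(\curl,\O)\cap\Bbb H_D(\O)^\perp$ and the corresponding Poincar\'e inequality, followed by the solution-independent $L^q$ bound on $\nabla\times\H$ via the H\"older continuity of $u$, the Neumann problem for $\varphi$, and the Remark \ref{Rem-key} normalization — are precisely the intended argument. No gaps; if anything you are more explicit than the paper in justifying that $v=u_1-u_2$ is an admissible ($L^\infty$) test function and that $\B$ lies in the right space.
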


\vskip0.1in

\subsection*{Acknowledgements.}
The authors would like to thank the referees and the editors for valuable comments and suggestions that helped to improve the paper. This work was partially supported by the National Natural Science Foundation of China Grant Nos. 11671143 and 11431005. Zhang was also supported by Anhui Provincial Natural Science Foundation Grant No. 1908085QA28.

\vskip0.2in

\end{document}